\pdfoutput=1
\RequirePackage[l2tabu, orthodox]{nag}

\documentclass[reqno]{amsart}

\usepackage{lmodern}
\usepackage[T1]{fontenc}
\usepackage[utf8]{inputenc}
\usepackage[english]{babel}
\usepackage{microtype} %Better font spacing

\usepackage{amsmath,amssymb,amsthm,mathrsfs,latexsym,mathtools,mathdots,booktabs,enumerate,tikz,bm,url}
\usepackage[enableskew,vcentermath]{youngtab}
\usepackage[centertableaux]{ytableau}
\usepackage{enumitem} 

\usetikzlibrary{arrows,positioning,decorations.pathreplacing}
\usepackage[capitalize,noabbrev]{cleveref}

\usepackage{todonotes}
\presetkeys%
    {todonotes}%
    {inline,backgroundcolor=yellow}{}

\usepackage{graphicx}

\usepackage{ifpdf}
\graphicspath{{./figures/}{./}}
\usepackage{epstopdf}
\DeclareGraphicsExtensions{.png,.jpg,.eps,.epsf,.pdf}

\newtheorem{theorem}{Theorem}[section]
\newtheorem{proposition}[theorem]{Proposition}
\newtheorem{lemma}[theorem]{Lemma}
\newtheorem{corollary}[theorem]{Corollary}
\newtheorem{conjecture}[theorem]{Conjecture}

\theoremstyle{definition}
\newtheorem{definition}[theorem]{Definition}
\newtheorem{construction}[theorem]{Construction}
\newtheorem{example}[theorem]{Example}
\newtheorem{remark}[theorem]{Remark}

\numberwithin{equation}{section}

\newcommand{\defin}[1]{\emph{#1}}

\newcommand{\setN}{\mathbb{N}}
\newcommand{\setZ}{\mathbb{Z}}

\newcommand{\setR}{\mathbb{R}}
\newcommand{\setC}{\mathbb{C}}

\newcommand{\avec}{\mathbf{a}}
\newcommand{\bvec}{\mathbf{b}}

\newcommand{\rvec}{\mathbf{r}}

\newcommand{\xvec}{\mathbf{x}}

\newcommand{\zvec}{\mathbf{z}}

\newcommand{\uvec}{\mathbf{u}}
\newcommand{\vvec}{\mathbf{v}}
\newcommand{\svec}{\mathbf{s}}
\newcommand{\tvec}{\mathbf{t}}

% Symbol for nth-root of unity.
\newcommand{\nthroot}{\omega}
\newcommand{\nthrootvec}{\boldsymbol{\omega}}

\newcommand{\orbit}{\mathcal{O}}

% For q-binomial
\newcommand{\qbinom}{\genfrac{[}{]}{0pt}{}}

%Permutations

\newcommand{\schurS}{\mathrm{s}}

\newcommand{\CSP}{\mathrm{CSP}}
\newcommand{\SSYT}{\mathrm{SSYT}}

\newcommand{\Mod}[1]{\ (\mathrm{mod}\ #1)}

\def\multiset#1#2{\ensuremath{\left(\kern-.3em\left(\genfrac{}{}{0pt}{}{#1}{#2}\right)\kern-.3em\right)}}

% Math operators

\DeclareMathOperator{\maj}{maj}

\DeclareMathOperator{\stat}{\tau}

\DeclareMathOperator{\flex}{flex}

\setlength{\parskip}{0.2cm}

\title{The cone of cyclic sieving phenomena}

\author{Per Alexandersson}
\author{Nima Amini}
\address{Dept. of Mathematics, Royal Institute of Technology, SE-100 44 Stockholm, Sweden}
\email{per.w.alexandersson@gmail.com, namini@kth.se}

\keywords{Cyclic sieving, universal, polytope, roots of unity}
%\subjclass[2010]{05E10,05E05}

\begin{document}
\begin{abstract}
We study cyclic sieving phenomena (CSP) on combinatorial objects from an abstract point of view by considering
a rational polyhedral cone determined by the linear equations that define such phenomena. 
Each lattice point in the cone corresponds to a non-negative integer matrix which jointly 
records the statistic and cyclic order distribution associated with the set of objects realizing the CSP.
In particular we consider a \emph{universal} subcone onto which every CSP matrix linearly 
projects such that the projection realizes a CSP with the same cyclic orbit structure, 
but via a \emph{universal} statistic that has even distribution on the orbits. 

Reiner et.al. showed that every cyclic action give rise to a unique polynomial (mod $q^n-1$) 
complementing the action to a CSP. 
We give a necessary and sufficient criterion for the converse to hold. 
This characterization allows one to determine if a combinatorial set with a 
statistic give rise (in principle) to a CSP without having a combinatorial 
realization of the cyclic action. We apply the criterion to conjecture a new CSP involving 
stretched Schur polynomials and prove our conjecture for certain rectangular tableaux. 
Finally we study some geometric properties of the CSP cone. 
We explicitly determine its half-space description and in the prime order case we determine its extreme rays.
\end{abstract}
\maketitle

\tableofcontents

\section{Introduction}

\subsection{Background on cyclic sieving phenomena}

The cyclic sieving phenomenon was introduced by Reiner, Stanton and White in \cite{Reiner2004}.
For a survey, see \cite{Sagan2011}.
\begin{definition}
Let $C_n$ be a cyclic group of order $n$ generated by $\sigma_n$, $X$ a finite set on which $C_n$ acts and $f(q) \in \mathbb{N}[q]$.
Let $X^{g} \coloneqq \{ x \in X : g \cdot x = x\}$ denote the fixed point set of $X$ under $g \in C_n$.
We say that the triple $(X,C_n,f(q))$ exhibits the \emph{cyclic sieving phenomenon (CSP)} if
\begin{equation} \label{eq:origCSPCond}
f(\nthroot_n^k) = |X^{\sigma_n^k}|, \text{ for all $k \in \mathbb{Z}$},
\end{equation}
where $\nthroot_n$ is any fixed primitive $n^{\text{th}}$ root of unity.
\end{definition}

Since $f(1)$ is always the cardinality of $X$, it is common that
$f(q)$ is given as $f_{\stat}(q) \coloneqq \sum_{x\in X} q^{\stat(x)}$ for some statistic on $X$.
With this in mind, we say that the triple $(X,C_n,\stat)$ exhibits CSP if $(X,C_n,f_{\stat}(q))$ does.

\medskip 
Here is a short list of cyclic sieving phenomena found in the literature (see \cite{Reiner2004, Sagan2011} for a more comprehensive list):
\begin{itemize}
 \item Words $X = W_{n,k}$ of length $n$ over an alphabet of size $k$, $C_n$ acting via cyclic shift, 
 \[ f(q) \coloneqq \qbinom{n+k-1}{k}_q  = \sum_{w\in W_{n,k}} q^{\maj w}.
 \]

 \item Standard Young tableaux $X = \text{SYT}(\lambda)$ of rectangular shape $\lambda = (n^m)$, $C_n$ acting via jeu-de-taquin promotion \cite{Rhoades2010}, 
\[
f(q) \coloneqq \frac{[n]_q!}{\prod_{(i,j) \in \lambda} [h_{i,j}]_q} = q^{-n \binom{m}{2}}\sum_{T \in \mathrm{SYT}(\lambda)} q^{\maj(T)},
\]
this expression being the $q$-hook-length formula \cite{Stanley1971}.

\item Triangulations $X$ of a regular $(n+2)$-gon, $C_{n+2}$ acting via rotation
of the triangulation, $f(q) \coloneqq \frac{1}{[n+1]_q} \qbinom{2n}{n}_q$, MacMahon's $q$-analogue of the Catalan numbers \cite{MacMahon1960}.
Note that through well-known bijections (see \cite{StanleyCatalan2015}) we get induced CSPs with the sets $X = \mathrm{Dyck}(n)$, the set of Dyck paths of semi-length $n$,
and $X = \mathfrak{S}_n(231)$, the set of permutations in $\mathfrak{S}_n$ avoiding the classical pattern $231$.
Moreover one has
\[
f(q) \coloneqq \frac{1}{[n+1]_q} \qbinom{2n}{n}_q = \sum_{P \in \mathrm{Dyck}(n)} q^{\maj(P)} = \sum_{\pi \in \mathfrak{S}_n(231)} q^{\maj(\pi) + \maj(\pi^{-1})},
\]
where the last equality is due to Stump \cite{Stump2009}.
\end{itemize} \noindent

\subsection{Outline of the paper}
The examples presented in the previous subsection have one or more of the following pair of common features:
\begin{itemize}
\item The action of $C_n$ on $X$ has a \emph{natural} definition.
\item The polynomial $f(q)$ is generated by a \emph{natural} statistic on $X$.
\end{itemize}\noindent
What is \emph{natural} largely lies in the eyes of the beholder, but broadly it could be taken to mean a definition with combinatorial substance.

The following equivalent condition for a triple $(X,C_n,f(q))$
to exhibit the cyclic sieving phenomenon was given by Reiner--Stanton--White in \cite{Reiner2004}:
\begin{equation} \label{eq:equivDefRSW}
f(q) \equiv \sum_{\mathcal{O} \in \text{Orb}_{C_n}(X)} \frac{q^{n}-1}{q^{n/|\mathcal{O}|}- 1} \Mod{q^n-1},
\end{equation}
where $\text{Orb}_{C_n}(X)$ denotes the set
of orbits of $X$ under the action of $C_n$. 

Therefore the coefficient of $q^{i}$ in $f(q) \Mod{q^n-1}$ is generically interpreted as the number of orbits whose stabilizer-order divides $i$. This alternative condition also means that every cyclic action of $C_n$ on a finite set $X$ give rise to a (not necessarily natural) polynomial $f(q)$, unique modulo $q^n-1$, such that $(X,C_n,f(q))$ exhibits the cyclic sieving phenomenon.

In this paper we consider when the converse of the above property holds. \emph{Given a combinatorial set $X$ with a natural statistic $\tau:X \to \mathbb{N}$, when does it give rise to a (not necessarily natural) action of $C_n$ on $X$ such that $(X,C_n,\tau)$ exhibits the cyclic sieving phenomenon?}

Having a necessary and sufficient criteria for the existence of such a CSP adds a couple of benefits:  

\begin{itemize}
\item Given a polynomial $f(q) = \sum_{x \in X} q^{\tau(x)}$ generated by a natural statistic $\tau:X \to \mathbb{N}$, we can determine if a CSP exists in principle without knowing a combinatorial realization of the cyclic action. The criteria thus serves as a tool for confirming or refuting the existence of cyclic sieving phenomena involving a candidate polynomial.
\item Generic evidence that a CSP exists provides motivation to search for a combinatorially meaningful cyclic action on the set $X$.  
\end{itemize}
\medskip 
The main result in \cref{sec:intValPoly} is the following:
\cref{thm:cspCond} provides the necessary and sufficient conditions for $(X,C_n,f(q))$ to exhibit CSP.
The natural (necessary) condition is that $f(q) \in \mathbb{Z}[q]$ take non-negative integer values at all $n^{\text{th}}$ roots of unity,
which is evident from the definition of a cyclic sieving phenomena.

We prove the following: Define
\[
S_k \coloneqq \sum_{j|k} \mu(k/j) f(\omega_n^j), \qquad \text{ where $k|n$}.
\] 
Then $(X,C_n,f(q))$ exhibits CSP if and only if $S_k \geq 0$ for all $k|n$.
% The integer $S_k$ determine the number of elements of order $k$.
\medskip 
We warn that merely having a polynomial $f(q) \in \mathbb{N}[q]$ that takes non-negative integer values at all $n^{\text{th}}$ roots of unity is no guarantee for the existence of a cyclic action complementing $f(q)$ to a CSP. A polynomial demonstrating this is given in \cref{ex:counterex}.

In \cref{sec:apps}, we conjecture a new cyclic sieving phenomena
involving stretched Schur polynomials.
In a special case, we prove this conjecture by applying \cref{thm:cspCond}, see \cref{thm:rectSchur} below.
That is, we prove existence of CSP without having to provide a natural cyclic group action.

\cref{sec:cspCone} and onwards treat the cyclic sieving phenomenon 
from a more geometric perspective. 
We record the joint cyclic order and statistic distribution of the elements of $X$ 
in a matrix and reformulate the CSP condition in terms of linear equations in the matrix entries. 
The set of matrices that satisfy these linear equations we call \emph{CSP matrices} 
and we prove via \cref{thm:nhypdesc} that they form a convex 
rational polyhedral cone whose integer lattice points correspond to 
realizable instances of CSP. 
Inspired by \cite{Ahlbach2017}, we further proceed to identify a certain 
subcone which we call the \emph{universal CSP cone} containing all matrices corresponding to realizable 
instances of CSP with evenly distributed statistic on all its orbits. 
We prove that all integer CSP matrices can be obtained from a universal CSP matrix 
through a sequence of \emph{swaps} 
without going outside of the CSP cone (\cref{prop:swap}). 
The swaps can be interpreted as a sequence of statistic 
interchanges between pairs of elements in the corresponding CSP-instance. 

Finally we explicitly determine all extreme rays of the universal CSP cone 
(\cref{cor:univExtremeRays}) and in \cref{sec:generalProps} 
we prove some general properties for all CSP cones.

\subsection{Notation}
The following notation will be used throughout the paper.
\begin{itemize}
\item $[n] \coloneqq \{1,\dots, n \}$.
\item $\mathbb{R}_{\geq 0}$ denotes the set of non-negative real numbers.
\item $K^{n \times n}$ denotes the set of $n \times n$ matrices over the set $K$.
%\item $||A|| = \sum_{i,j} |a_{ij}|$ where $A = (a_{ij})$.
\item $\mu(n) \coloneqq \begin{cases} 0, & \text{ if $n$ is not square-free}, \\ (-1)^r, & \text{ if $n$ is a product of $r$ distinct primes}, \end{cases}$ \newline
denotes the classical M\"obius function.
\item $\omega_n$ denotes a primitive $n^{\text{th}}$ root of unity. 
\item $\displaystyle \Phi_n(q) \coloneqq \prod_{\substack{1 \leq k \leq n \\ \text{gcd}(n,k) = 1}} (q- \omega_n^k)$ denotes the $n^{\text{th}}$ cyclotomic polynomial.  
\item 
$\displaystyle [n]_q \coloneqq \frac{q^n-1}{q-1}$, \hspace{0.1cm} $\displaystyle [n]_q! \coloneqq [n]_q[n-1]_q \cdots [1]_q$, \hspace{0.1cm} $\displaystyle \qbinom{n}{k}_q \coloneqq \frac{[n]_q!}{[k]_q![n-k]_q!}$, \newline 
denotes the $q$-integer, $q$-factorial and $q$-binomial coefficients respectively.
\end{itemize}

\section{Integer-valued polynomials at roots of unity} \label{sec:intValPoly}
In the context of discovering cyclic sieving phenomena, one may sometimes have a
candidate polynomial (\emph{e.g.} a natural $q$-analogue of the enumeration formula for the underlying set) that takes integer values at all roots of unity, but the cyclic action complementing it to a CSP is unknown. In such situations one may like to know if a CSP could exist even in principle. In this section we characterize the set polynomials $f(q) \in \mathbb{Z}[x]$ of degree less than $n$ such that $f(\nthroot_n^j) \in \mathbb{Z}$ for all $j = 1, \dots, n$ and show that they are indeed $\mathbb{Z}$-linear combinations of polynomials of the form
\[
\frac{q^n-1}{q^{n/d}-1} = \sum_{i=0}^{n/d -1} q^{di} \hspace{0.5cm} \text{ for } d|n.
\]
Using the characterization one can quickly determine if a CSP is present and get the count of the number of elements of each order in terms of evaluations of the polynomial at roots of unity. Often it is much simpler to determine the evaluations at roots of unity than it is to write the polynomial in terms of the above basis.

Finally note that not all polynomials $f(q) \in \mathbb{N}[q]$ such that $f(\nthroot_n^j) \in \mathbb{N}$ for all $j = 1, \dots, n$ may necessarily be paired with a cyclic action to produce a CSP, see Example \ref{ex:counterex}.

The set 
\[
M(n) \coloneqq \{ f(q) \in \mathbb{Z}[q] : \text{deg}(f) < n, \thickspace f(\nthroot_n^j)\in \mathbb{Z} \text{ for } j= 1, \dots, n \}
\]
forms a $\mathbb{Z}$-module. First we identify two useful bases for $M(n)$ using the following proposition and \cref{lem:modBasis}.
\begin{proposition}[Désarménien \cite{Desarmemien1989}]\label{prop:Desarmien}
Let $f(q)\in \mathbb{Z}[q]$ be a polynomial of degree less than $n$.
Then the following two properties are equivalent:
\begin{enumerate}[label=(\roman*)]
\item For every $d|n$,
\[
f(q) \equiv r_d \Mod{\Phi_d(q)} \hspace{0.2cm} \text{for some } r_d \in \mathbb{Z},
\]
where $\Phi_d(q)$ denotes the $d^{\text{th}}$ cyclotomic polynomial.
\item The polynomial $f(q)$ has the form
\begin{align} \label{eq:basisForm}
f(q) = \sum_{j=0}^{n-1} a_j q^j, \hspace*{0.2cm} \text{ where } a_j = a_{\text{gcd}(n,j)}.
\end{align}
\end{enumerate}
\end{proposition}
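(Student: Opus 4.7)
The plan is to introduce the polynomials
\[
P_e(q) \coloneqq \frac{q^n-1}{q^e-1} = \sum_{i=0}^{n/e-1} q^{ei}, \qquad e \mid n,
\]
and to show that both (i) and (ii) characterize precisely the $\mathbb{Z}$-linear combinations of $\{P_e : e \mid n\}$, which yields the equivalence.

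For the direction (ii)~$\Rightarrow$~(i), I would start from $f(q) = \sum_{j=0}^{n-1} a_{\gcd(n,j)} q^j$ and apply Möbius inversion on the divisor lattice of $n$: setting $b_e \coloneqq \sum_{d \mid e} \mu(e/d)\, a_d \in \mathbb{Z}$, we have $a_d = \sum_{e \mid d} b_e$, and swapping the order of summation gives
\[
f(q) = \sum_{e \mid n} b_e \sum_{\substack{0 \le j < n \\ e \mid j}} q^j = \sum_{e \mid n} b_e\, P_e(q).
\]
It then suffices to check that every $P_e$ satisfies (i). Using $q^n-1 = \prod_{d \mid n} \Phi_d(q)$ and $q^e-1 = \prod_{d \mid e} \Phi_d(q)$, we get the factorization $P_e(q) = \prod_{d \mid n,\, d \nmid e} \Phi_d(q)$, so $P_e \equiv 0 \pmod{\Phi_d}$ whenever $d \nmid e$; for $d \mid e$ a primitive $d$-th root of unity $\omega_d$ satisfies $\omega_d^e = 1$, giving $P_e(\omega_d) = n/e$, i.e.\ $P_e \equiv n/e \pmod{\Phi_d}$. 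Thus $f$ is constant modulo each $\Phi_d$ with integer residue $r_d = \sum_{e:\, d \mid e \mid n} (n/e)\,b_e$.

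For (i)~$\Rightarrow$~(ii), condition (i) forces $f(\omega_n^k)$ to depend only on the order $n/\gcd(n,k)$ of $\omega_n^k$, hence only on $g \coloneqq \gcd(n,k)$, with value $r_{n/g}$. I would then recover the coefficients by the inverse discrete Fourier transform on $\mathbb{Z}/n\mathbb{Z}$:
\[
a_j = \frac{1}{n}\sum_{k=0}^{n-1} \omega_n^{-jk} f(\omega_n^k) = \frac{1}{n}\sum_{g \mid n} r_{n/g} \sum_{\substack{0 \le k < n \\ \gcd(n,k) = g}} \omega_n^{-jk}.
\]
After reparametrizing $k = g m$ with $\gcd(n/g,m)=1$, the innermost sum becomes the Ramanujan sum $c_{n/g}(-j) = \sum_{\gcd(n/g,m)=1} \omega_{n/g}^{-jm}$, which depends only on $\gcd(n/g, j)$. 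The elementary identity
\[
\gcd(n/g,\, j) = \gcd(n/g,\, \gcd(n,j)) \qquad (g \mid n),
\]
valid because any common divisor of $n/g$ and $j$ is a common divisor of $n$ and $j$ and vice versa, then shows that $a_j$ depends only on $\gcd(n,j)$. Integrality $a_j \in \mathbb{Z}$ is automatic from $f \in \mathbb{Z}[q]$.

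The main obstacle is essentially organizational: keeping the divisor-lattice Möbius inversion, the factorization of $P_e$ in terms of cyclotomic polynomials, and the Ramanujan-sum evaluation of the inverse DFT all consistent. The only conceptual ingredient is the gcd-identity displayed above; once it is in place, both directions follow from standard Möbius inversion and character-sum manipulations.
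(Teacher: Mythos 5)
The paper cites this result from D\'esarm\'enien without giving a proof, so there is no in-paper argument to compare against; your write-up supplies a complete self-contained proof. Both directions are correct. For (ii)$\Rightarrow$(i), the M\"obius change of basis $a_d = \sum_{e\mid d} b_e$ followed by swapping sums cleanly produces $f = \sum_{e\mid n} b_e P_e$, and the factorization $P_e = \prod_{d\mid n,\, d\nmid e}\Phi_d$ gives the constant residues $r_d = \sum_{d\mid e\mid n}(n/e)b_e \in \mathbb{Z}$. For (i)$\Rightarrow$(ii), the inverse DFT plus Ramanujan-sum evaluation works: the key gcd identity $\gcd(n/g,j)=\gcd(n/g,\gcd(n,j))$ for $g\mid n$ is correct (both equal $\gcd(n/g,\,n,\,j)$ since $n/g\mid n$), and since $c_q(\cdot)$ depends only on $\gcd(q,\cdot)$ you indeed get that $a_j$ depends only on $\gcd(n,j)$. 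One small remark: integrality of $r_d$ in (i) is automatic rather than an extra hypothesis, since $\Phi_d$ is monic with integer coefficients, so division of $f\in\mathbb{Z}[q]$ by $\Phi_d$ leaves an integer-coefficient remainder; your (i)$\Rightarrow$(ii) argument quietly and correctly never needs the integrality. As an alternative worth knowing, the converse direction also has a shorter route avoiding Ramanujan sums: if $\gcd(n,j)=\gcd(n,j')$ then $\omega_n^{j}$ and $\omega_n^{j'}$ are Galois conjugates over $\mathbb{Q}$, and condition (i) forces $f$ to take equal values on all primitive $d$-th roots of unity, which combined with the invertibility of the DFT and the fact that the Galois action permutes $\{\omega_n^k\}$ within each order class gives $a_j = a_{j'}$ directly; your explicit computation achieves the same and is more concrete.
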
 \noindent
\begin{lemma} \label{lem:modBasis}
For each $n \in \mathbb{N}$, the following sets form $\mathbb{Z}$-bases for $M(n)$:
\begin{enumerate}[label=(\roman*)]
\item $\mathcal{B}_1(n) = \{ g_d(q) : d|n \}$ where
\[
g_d(q) = \sum_{\substack{0 \leq j < n \\ \gcd(j,n) = d}} q^j,
\]
\item $\mathcal{B}_2(n) = \{ h_d(q) : d|n \}$ where
\[
h_d(q) = \sum_{j=0}^{n/d-1} q^{dj}.
\]
\end{enumerate}
\end{lemma}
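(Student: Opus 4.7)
The plan is to use Désarménien's characterization to identify $\mathcal{B}_1(n)$ as a basis essentially on the nose, and then relate $\mathcal{B}_2(n)$ to $\mathcal{B}_1(n)$ by a unimodular integer change of basis coming from the divisibility poset of $n$.

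For $\mathcal{B}_1(n)$: by \cref{prop:Desarmien}, every $f\in M(n)$ has the form $f(q)=\sum_{j=0}^{n-1} a_j q^j$ with $a_j = a_{\gcd(n,j)}$, so $f$ is determined by the values $a_d$ for $d\mid n$. Grouping monomials by the value of $\gcd(j,n)$ yields the identity $f(q)=\sum_{d\mid n} a_d\, g_d(q)$. Conversely, each $g_d$ visibly satisfies (ii) of \cref{prop:Desarmien}, since its coefficient function $j\mapsto [\gcd(j,n)=d]$ depends only on $\gcd(j,n)$, so $g_d \in M(n)$. Because the sets $\{j : 0 \le j<n,\ \gcd(j,n)=d\}$ for $d\mid n$ partition $\{0,1,\dots,n-1\}$, the $g_d$ have pairwise disjoint monomial supports, hence they are $\mathbb{Z}$-linearly independent. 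This gives the basis claim for $\mathcal{B}_1(n)$.

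For $\mathcal{B}_2(n)$: I would derive the explicit expansion
\[
h_d(q) \;=\; \sum_{i=0}^{n/d-1} q^{di} \;=\; \sum_{\substack{0\le j<n\\ d\mid j}} q^j \;=\; \sum_{\substack{e\mid n\\ d\mid e}} g_e(q),
\]
the last equality because $\{j : 0\le j<n,\ d\mid j\}$ partitions according to the value of $\gcd(j,n)$, which must be a divisor of $n$ that is a multiple of $d$. In particular $h_d \in M(n)$. The transition matrix $A = ([d\mid e])_{d,e\mid n}$ is unitriangular with respect to any linear extension of the divisibility partial order on the divisors of $n$, hence $A \in \mathrm{GL}(\mathbb{Z})$ with inverse given by Möbius inversion, namely $g_d(q) = \sum_{d\mid e\mid n} \mu(e/d)\,h_e(q)$. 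This shows that $\mathcal{B}_2(n)$ is also a $\mathbb{Z}$-basis.

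There is no real obstacle here: once \cref{prop:Desarmien} is in hand, the content of part (i) is bookkeeping over the divisors of $n$, and part (ii) reduces to checking the triangularity of the divisor incidence matrix. The only small point to mind is that $g_n(q)=1$ and $h_n(q)=1$ (the cases $d=n$) correspond to the constant polynomial, which indeed satisfies both descriptions.
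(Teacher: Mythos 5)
Your argument follows essentially the same route as the paper for both parts, and the $\mathcal{B}_2$ part is actually a bit more explicit (you write out $h_d = \sum_{d\mid e\mid n} g_e$ rather than the paper's inverted formula, and observe unitriangularity), but there is a gap in the $\mathcal{B}_1$ part. You write ``by \cref{prop:Desarmien}, every $f\in M(n)$ has the form $\dots$,'' treating \cref{prop:Desarmien} as if it directly characterizes $M(n)$. It does not: \cref{prop:Desarmien} is an equivalence between conditions (i) and (ii), and the defining condition of $M(n)$ --- that $f(\nthroot_n^j)\in\mathbb{Z}$ for all $j$ --- is a priori a third, different condition. What is needed (and what the paper proves) is that $f\in M(n)$ implies (i): for each $d\mid n$, the element $\nthroot_n^{n/d}$ is a primitive $d$th root of unity, so $f(q)-f(\nthroot_n^{n/d})$ is a polynomial in $\mathbb{Z}[q]$ vanishing at $\nthroot_n^{n/d}$ and is therefore divisible (in $\mathbb{Z}[q]$, since $\Phi_d$ is monic with integer coefficients and is the minimal polynomial of $\nthroot_n^{n/d}$) by $\Phi_d(q)$, giving $f\equiv f(\nthroot_n^{n/d})\pmod{\Phi_d(q)}$ with $f(\nthroot_n^{n/d})\in\mathbb{Z}$. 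Without this bridge, your deduction from $f\in M(n)$ to the form $a_j=a_{\gcd(n,j)}$ is unjustified. The converse containment, which you do handle (``each $g_d$ visibly satisfies (ii)\dots so $g_d\in M(n)$''), is the easy direction, but you should also note that it uses the step (ii) $\Rightarrow$ (i) $\Rightarrow$ integer values at roots of unity. Once this cyclotomic-polynomial argument is inserted, the rest of your proof --- disjoint monomial supports for $\mathcal{B}_1$, the divisor-incidence expansion for $\mathcal{B}_2$, and M\"obius inversion --- is correct and matches the paper.
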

\begin{proof}
Let $f(q) \in M(n)$ and suppose $d|n$.
Then $\nthroot_n^{n/d}$ is a $d^{\text{th}}$ root of unity.
Note that $f(q) - f(\nthroot_n^{n/d})$ vanishes at $q = \nthroot_n^{n/d}$ so it is divisible by the minimal polynomial of $\nthroot_n^{n/d}$ over $\mathbb{Z}$, that is, $\Phi_d(q)$.
Hence
$f(q) \equiv r_d \Mod{\Phi_d(q)}$ where $r_d= f(\nthroot_n^{n/d}) \in \mathbb{Z}$.
By \cref{prop:Desarmien} it follows that $f(q)$ has the form \eqref{eq:basisForm}.
Such polynomials are clearly spanned by $\mathcal{B}_1(n)$.

Now, the elements in $\mathcal{B}_2(n)$ are linearly independent, since the lowest-degree
terms of $h_d(q)-1$ are all different.
By inclusion-exclusion we see that for each $d|n$,
\[
g_d(q) = \sum_{d|r} \mu(r/d) h_r(q)
\]
and hence $\mathcal{B}_1(n)$ and $\mathcal{B}_2(n)$ both form bases of $M(n)$.
\end{proof}
We may in fact extend the characterization in \cref{lem:modBasis} to multivariate polynomials $f \in \mathbb{Z}[q_1,\dots, q_m]$ of degree less than $n_i$ in variable $q_i$ for $i = 1,\dots, m$ taking integer values at all points $(\omega_{n_1}^{j_1}, \dots, \omega_{n_m}^{j_m}) \in \mathbb{C}^m$ for $j_i = 1,\dots, n_i$, $i = 1,\dots, m$.
\begin{theorem}\label{thm:multmodbasis}
Let 
$M(n_1,\dots, n_m) = \{ f\in \mathbb{Z}[q_1,\dots, q_m] :\text{deg}_i f < n_i, \thickspace f(\omega_{n_1}^{j_1},\dots, \omega_{n_m}^{j_m}) \in \mathbb{Z} \text{ for } j_i = 1, \dots, n_i, \thickspace i = 1, \dots, m\}$ where $n_1,\dots, n_m \in \mathbb{N}$ and $\text{deg}_i f$ denotes the degree of $x_i$ in $f$. Then the following sets form $\mathbb{Z}$-bases for $M(n_1,\dots, n_m)$:
\begin{enumerate}[label=(\roman*)]
\item $\mathcal{B}_1(n_1,\dots, n_m) = \left \{ \prod_{i=1}^m g_{d_i}^{(i)}(q_i) : d_i|n_i, \thickspace i = 1,\dots, m \right \}$ where
\[
g_{d_i}^{(i)}(q_i) = \sum_{\substack{0 \leq j < n_i \\ \text{gcd}(j,n_i) = d_i}} q_i^j,
\]
\item $\mathcal{B}_2(n_1,\dots, n_m) = \left \{ \prod_{i=1}^m h_{d_i}^{(i)}(q_i) : d_i|n_i, \thickspace i = 1,\dots, m \right \}$ where
\[
h_{d_i}^{(i)}(q_i) = \sum_{j=0}^{n_i/d_i -1} q_i^{d_ij}.
\]
\end{enumerate}
\end{theorem}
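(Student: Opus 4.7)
The plan is to induct on the number of variables $m$, with \cref{lem:modBasis} as the base case $m=1$ and the tensor-product structure of the proposed bases handling the inductive step. Containment $\mathcal{B}_k(n_1,\ldots,n_m) \subseteq M(n_1,\ldots,n_m)$ for $k\in\{1,2\}$ is immediate: each basis element is a product of univariate polynomials in $M(n_i)$, and such products evaluate to integer products at any tuple of roots of unity. For $\mathbb{Z}$-linear independence of $\mathcal{B}_2(n_1,\ldots,n_m)$, I note that $\prod_i h_{d_i}^{(i)}(q_i)$ has monomial support $\{(d_1 j_1,\ldots,d_m j_m):0\le j_i<n_i/d_i\}$, and distinct divisor tuples $(d_1,\ldots,d_m)$ yield distinguishable supports via a coordinate-wise lowest-term argument mimicking the univariate proof.

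For the spanning property, I would take $f\in M(n_1,\ldots,n_m)$ and expand $f=\sum_{j=0}^{n_m-1} f_j(q_1,\ldots,q_{m-1})\,q_m^j$. The plan is then to establish that each coefficient $f_j$ lies in $M(n_1,\ldots,n_{m-1})$, at which point the inductive hypothesis expresses each $f_j$ as a $\mathbb{Z}$-combination of $\mathcal{B}_2(n_1,\ldots,n_{m-1})$. Applying \cref{lem:modBasis} to $f$, now viewed as a polynomial in $q_m$ whose coefficients lie in the inductive basis, packages everything into $\mathbb{Z}$-combinations of products $\prod_i h_{d_i}^{(i)}(q_i)$, yielding the expansion in $\mathcal{B}_2(n_1,\ldots,n_m)$. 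The basis $\mathcal{B}_1$ is then obtained from $\mathcal{B}_2$ by coordinate-wise M\"obius inversion $g_{d_i}^{(i)}=\sum_{d_i\mid r_i}\mu(r_i/d_i)h_{r_i}^{(i)}$ exactly as in the univariate case.

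The hard part will be the step $f_j\in M(n_1,\ldots,n_{m-1})$. One attempts this via the inverse discrete Fourier transform
\[
f_j\bigl(\omega_{n_1}^{k_1},\ldots,\omega_{n_{m-1}}^{k_{m-1}}\bigr)=\frac{1}{n_m}\sum_{k=0}^{n_m-1} f\bigl(\omega_{n_1}^{k_1},\ldots,\omega_{n_{m-1}}^{k_{m-1}},\omega_{n_m}^{k}\bigr)\,\omega_{n_m}^{-jk},
\]
whose right-hand side is manifestly an algebraic integer in $\mathbb{Z}[\omega_{\operatorname{lcm}(n_1,\ldots,n_{m-1})}]$ while also lying in $\tfrac{1}{n_m}\mathbb{Z}[\omega_{n_m}]$. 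Pinning this common value down into $\mathbb{Z}$ requires Galois-theoretic control over the cyclotomic integers involved; this goes through cleanly when the $n_i$ are pairwise coprime (so the compositum of the $\mathbb{Q}(\omega_{n_i})$ splits as a simple cyclotomic extension), but in general the intersection $\mathbb{Z}[\omega_{n_m}]\cap\mathbb{Z}[\omega_{\operatorname{lcm}(n_1,\ldots,n_{m-1})}]$ need not coincide with $\mathbb{Z}$, and this is the delicate part of a complete argument.
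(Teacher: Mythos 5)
Your inductive scaffolding matches the paper's proof exactly: both expand $f$ in powers of the last variable and attempt to place the coefficient polynomials $f_j$ in $M(n_1,\dots,n_{m-1})$, so that the univariate \cref{lem:modBasis} applies fiberwise and the inductive hypothesis finishes. You are right to flag the step $f_j \in M(n_1,\dots,n_{m-1})$ as the delicate one, and right that it goes through when the $n_i$ are pairwise coprime: your inverse-DFT expression then places $f_j(\omega_{n_1}^{k_1},\dots,\omega_{n_{m-1}}^{k_{m-1}})$ inside $\mathbb{Q}(\omega_{n_m})\cap \mathbb{Q}\bigl(\omega_{\operatorname{lcm}(n_1,\dots,n_{m-1})}\bigr)=\mathbb{Q}$, while it is manifestly an algebraic integer, hence a rational integer. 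The paper's own proof simply \emph{asserts} $f_k(\omega_{n_1}^{j_1},\dots,\omega_{n_m}^{j_m})\in\mathbb{Z}$ with no justification, so you have correctly located the one place where the published argument is incomplete.

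In fact the concern you raise is fatal: the theorem is false as stated once the $n_i$ share a common factor. Take $m=2$, $n_1=n_2=4$, and
\[
 f(q_1,q_2) = q_1 q_2 + q_1^3 q_2^3.
\]
Then $\deg_i f = 3 < 4$, and $f(\omega_4^a,\omega_4^b) = \omega_4^{a+b}+\omega_4^{-(a+b)} = 2\cos\bigl(\tfrac{\pi}{2}(a+b)\bigr)\in\{2,0,-2\}$, so $f\in M(4,4)$. Yet the coefficient of $q_2^1$ is $f_1(q_1)=q_1$ with $f_1(\omega_4)=\omega_4\notin\mathbb{Z}$, so $f_1\notin M(4)$ and the induction stalls. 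Worse, $f$ is not in the $\mathbb{Z}$-span of $\mathcal{B}_1(4,4)$ at all: a polynomial $\sum a_{j_1j_2}q_1^{j_1}q_2^{j_2}$ lies in that span if and only if $a_{j_1j_2}$ depends only on $(\gcd(j_1,4),\gcd(j_2,4))$, and here $a_{1,1}=1\neq 0 = a_{1,3}$ even though $\gcd(1,4)=\gcd(3,4)$. Thus $\mathcal{B}_1(4,4)$ does not span $M(4,4)$; the theorem needs an additional hypothesis, and pairwise coprimality of the $n_i$ (exactly what your Galois-theoretic argument handles) is a sufficient one.
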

\begin{proof}
We prove that $\mathcal{B}_1(n_1,\dots, n_m)$ is a $\mathbb{Z}$-basis of $M(n_1,\dots, n_m)$ by induction on $m$. The proof for $\mathcal{B}_2$ is similar and therefore omitted. The base case $m=1$ follows from Lemma \ref{lem:modBasis}. Let $f \in M(n_1,\dots, n_{m+1})$. Write 
\[
f = f_{n_{m+1}-1}(q_1,\dots,q_{m})q_{m+1}^{n_{m+1}-1} + \cdots + f_{1}(q_1,\dots,q_{m})q_{m+1} + f_{0}(q_1,\dots,q_{m}),
\] 
where $f_{0},f_{1}, \dots, f_{n_{m+1}-1} \in \mathbb{Z}[q_1,\dots, q_{m}]$ with $f_k(\omega_{n_1}^{j_1}, \dots, \omega_{n_{m}}^{j_{m}}) \in \mathbb{Z}$ for all $k = 0,\dots, n_{m+1}-1$, $j_i = 1, \dots, n_i$ and $i = 1,\dots, m$. The univariate polynomials \[
F_{\omega_{n_1}^{j_1}, \dots, \omega_{n_{m}}^{j_{m}}}(q_{m+1}) = f(\omega_{n_1}^{j_1}, \dots, \omega_{n_{m}}^{j_{m}},q_{m+1}) \in \mathbb{Z}[q_{m+1}],
\] 
take integer values at $q_{m+1} = \omega_{n_{m+1}}^{j}$ for all $j = 1,\dots, n_{m+1}$. By Proposition \ref{prop:Desarmien} we therefore have that
\[
f_k(\omega_{n_1}^{j_1}, \dots, \omega_{n_{m}}^{j_{m}}) = f_{\text{gcd}(n_{m+1},k)}(\omega_{n_1}^{j_1}, \dots, \omega_{n_{m}}^{j_{m}}),
\]
for all $(\omega_{n_1}^{j_1}, \dots, \omega_{n_m}^{j_m}) \in \mathbb{C}^m$. Since the $\prod_{i=1}^m n_i$ points $(\omega_{n_1}^{j_1}, \dots, \omega_{n_m}^{j_m}) \in \mathbb{C}^m$ lie in general position the polynomials must coincide on all points in $\mathbb{C}^m$. Hence 
\[ 
f_k(q_1,\dots, q_m) = f_{\text{gcd}(n_{m+1},k)}(q_1,\dots, q_m)
\]
for all $k = 0,\dots, n_{m+1}-1$. It follows that $f$ is uniquely spanned by $\mathcal{B}_1(n_{m+1})$ over $\mathbb{Z}[q_1,\dots, q_m]$. By induction $f_k(q_1,\dots, q_m)$ is uniquely spanned by $\mathcal{B}_1(n_1,\dots, n_m)$ over $\mathbb{Z}$ for all $k = 0,\dots, n_{m+1}-1$. Hence $f$ is uniquely spanned by $\mathcal{B}_1(n_1,\dots, n_{m+1})$ over $\mathbb{Z}$ completing the induction.
\end{proof}

\begin{lemma} \label{lem:peCong}
Let $f(q) \in \mathbb{Z}[q]$ such that $f(\nthroot_n^j) \in \mathbb{Z}$ for all $j = 1, \dots, n$. Then for each $m,p,e \in \mathbb{N}$ where $p$ is prime we have
\[
f(\nthroot_n^{mp^e}) \equiv f(\nthroot_n^{mp^{e-1}}) \Mod{p^e}.
\]
In particular if $p\not | n$, then $f(\nthroot_n^{mp^{e-1}}) =  f(\nthroot_n^{mp^e})$.
\end{lemma}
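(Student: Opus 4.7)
The plan is to reduce the claim to a computation on the basis $\{h_d : d \mid n\}$ from \cref{lem:modBasis}. Since the evaluations $f(\nthroot_n^j)$ are unchanged upon replacing $f$ by its remainder modulo $q^n-1$ (which still has integer coefficients), I may assume $\deg f < n$. Then $f \in M(n)$, so \cref{lem:modBasis} supplies an integer expansion
\[
f(q) = \sum_{d \mid n} c_d\, h_d(q), \qquad c_d \in \mathbb{Z}.
\]
Both the asserted congruence and the asserted equality are $\mathbb{Z}$-linear in $f$, so it is enough to prove each claim for the individual basis polynomials $h_d(q) = \sum_{j=0}^{n/d-1} q^{dj}$.

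The next step is a direct geometric sum evaluation, yielding
\[
h_d(\nthroot_n^k) = \begin{cases} n/d, & (n/d) \mid k, \\ 0, & \text{otherwise.} \end{cases}
\]
Thus the problem reduces to a divisibility comparison between $n/d$ and the two exponents $mp^e$ and $mp^{e-1}$. Since $(n/d) \mid mp^{e-1}$ trivially implies $(n/d) \mid mp^e$, the only case in which the two evaluations of $h_d$ can disagree is when $(n/d) \mid mp^e$ but $(n/d) \nmid mp^{e-1}$; the desired congruence then becomes precisely $n/d \equiv 0 \Mod{p^e}$.

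The heart of the argument is a short $p$-adic valuation check in this one remaining case. Writing $n/d = p^{a} u$ and $m = p^{b} v$ with $u,v$ coprime to $p$, the condition $(n/d) \mid mp^e$ is equivalent to $u \mid v$ together with $a \leq b+e$, while $(n/d) \nmid mp^{e-1}$ forces $a \geq b+e$. Hence $a = b+e \geq e$, so $p^e \mid n/d$, as required. For the ``in particular'' clause, the hypothesis $p \nmid n$ gives $v_p(n/d)=0 < e$, so the nontrivial case cannot occur and the two evaluations of each $h_d$ agree identically; recombining the expansion of $f$ over $d$ then yields the asserted equality of $f$-values. The only delicate point is this $p$-adic matching, and it is very short once the reduction to the $h_d$-basis is in hand.
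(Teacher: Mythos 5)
Your proof is correct and follows essentially the same route as the paper's: reduce to $M(n)$, expand in the basis $\{h_d\}$ by linearity, evaluate $h_d$ at roots of unity by the geometric sum, and do a case split on whether $n/d$ divides the two exponents. Your $p$-adic valuation argument in the one nontrivial case simply spells out in more detail the step the paper states as ``then $n/d = p^f a$ for some $f \geq e$.''
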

\begin{proof}
Since we are only concerned with evaluations of $f(q)$ at $n^{\text{th}}$ roots of unity, we may assume $f(q)\in M(n)$. 
Furthermore by \cref{lem:modBasis} and linearity it suffices to show the 
statement for the basis elements $\mathcal{B}_2$ of $M(n)$. 
For each $d|n$ and $k \in \mathbb{Z}$ we have
\[
h_d(\nthroot_n^{k}) = \sum_{j=0}^{n/d - 1} (\nthroot_{n/d}^{k})^j = 
\begin{cases} n/d, & \text{ if } k \equiv 0 \Mod{n/d}, \\ 
0, & \text{ otherwise}. 
\end{cases}
\]
Now suppose $k = mp^e$ for some $m,p,e \in \mathbb{N}$ with $p$ prime, and consider the different cases: Suppose first $mp^{e-1} \equiv 0 \Mod{n/d}$. 
This implies that $mp^{e} \equiv 0 \Mod{n/d}$, so $h_d(\nthroot_n^{mp^e}) = n/d = h_d(\nthroot_n^{mp^{e-1}})$.
Secondly, suppose $mp^{e-1} \not \equiv 0 \Mod{n/d}$. 
If $mp^{e} \not \equiv 0 \Mod{n/d}$, then $h_d(\nthroot_n^{mp^e}) = 0 = h_d(\nthroot_n^{mp^{e-1}})$. 
On the other hand if $mp^{e} \equiv 0 \Mod{n/d}$, then $n/d = p^fa$ for some $f \geq e$ and $a \in \mathbb{N}$. 
Therefore $h_d(\nthroot_n^{mp^e}) - h_d(\nthroot_n^{mp^{e-1}}) = p^fa - 0 \equiv 0 \Mod{p^e}$. 
Hence the lemma follows.
\end{proof} \noindent
\begin{lemma} \label{lem:mobiusDiv}
Let $f(q) \in \mathbb{Z}[q]$ such that $f(\nthroot_n^j) \in \mathbb{Z}$ for all $j = 1, \dots, n$. Then for each $k = 1,\dots, n$ we have that
\[
\sum_{j|k} \mu(k/j)f(\nthroot_n^j) \equiv 0 \Mod{k}.
\]
Moreover if $k \not |n$, then $\sum_{j|k} \mu(k/j)f(\nthroot_n^j) = 0$.
\end{lemma}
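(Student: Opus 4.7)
The plan is to exploit the basis $\mathcal{B}_2(n) = \{h_d : d \mid n\}$ of $M(n)$ from \cref{lem:modBasis}, together with the standard Möbius identity $\sum_{t \mid m} \mu(m/t) = [m=1]$. Since the claimed identity only involves evaluations of $f$ at $n$-th roots of unity, replacing $f$ by its reduction modulo $q^n-1$ preserves the hypothesis and yields a representative in $M(n)$; so I may assume without loss of generality that $f \in M(n)$. By \cref{lem:modBasis}(ii), write $f(q) = \sum_{d \mid n} c_d\, h_d(q)$ with $c_d \in \mathbb{Z}$, and by linearity it suffices to evaluate $T_k(d) \coloneqq \sum_{j \mid k} \mu(k/j) h_d(\nthroot_n^j)$ for each $d \mid n$.

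A direct geometric-series computation (already carried out inside the proof of \cref{lem:peCong}) gives $h_d(\nthroot_n^j) = (n/d) \cdot [(n/d) \mid j]$. Set $s = n/d$. If $s \nmid k$ then no divisor $j$ of $k$ is a multiple of $s$, so $T_k(d) = 0$. Otherwise substitute $j = st$ with $t \mid k/s$ to get $T_k(d) = s\sum_{t \mid k/s} \mu((k/s)/t) = s \cdot [k/s = 1]$, which equals $k$ when $d = n/k$ (which automatically forces $k \mid n$) and vanishes in every other case.

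Summing over $d$ then yields
\[
\sum_{j \mid k} \mu(k/j) f(\nthroot_n^j) \;=\; \begin{cases} k\, c_{n/k}, & \text{if } k \mid n, \\ 0, & \text{if } k \nmid n. \end{cases}
\]
Both assertions of the lemma drop out at once: divisibility by $k$ is visible from the explicit factor of $k$ (with $c_{n/k} \in \mathbb{Z}$ because $f \in M(n)$), and the outright vanishing for $k \nmid n$ is built into the formula.

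No substantial obstacle is anticipated; the ingredients are already in place. An alternative route, more in the spirit of the positioning of \cref{lem:peCong} just above, would factor $k = \prod_i p_i^{e_i}$ and pair divisors of $k$ that differ only in the $p_i$-adic exponent, reducing divisibility by $p_i^{e_i}$ to the congruence $f(\nthroot_n^{mp^{e}}) \equiv f(\nthroot_n^{mp^{e-1}}) \Mod{p^e}$ supplied by \cref{lem:peCong}, and then combining via CRT. That approach handles the divisibility cleanly but makes the second (vanishing) assertion a little awkward when a prime $p \mid \gcd(k,n)$ with $v_p(k) > v_p(n)$ occurs; the basis computation above bypasses this issue entirely.
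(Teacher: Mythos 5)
Your proof is correct and takes a genuinely different route from the paper. The paper argues prime by prime: writing $k = mp^e$ with $p \nmid m$, it applies \cref{lem:peCong} to pair up each divisor $jp^{e-1}$ of $k$ with $jp^e$, concludes divisibility by $p^e$, and (implicitly, by iterating over the primes of $k$ and combining) gets divisibility by $k$; for $k \nmid n$ it tries to pick a prime $p \mid k$ with $p \nmid n$ so that the congruence of \cref{lem:peCong} upgrades to an equality. Your argument instead reduces $f$ modulo $q^n-1$ into $M(n)$, expands $f = \sum_{d|n} c_d h_d$ in the $\mathcal{B}_2$ basis of \cref{lem:modBasis}, and computes the Möbius sum in closed form via $h_d(\omega_n^j) = (n/d)\cdot[(n/d)\mid j]$ and $\sum_{t|m}\mu(m/t) = [m=1]$, yielding the exact identity
\[
\sum_{j\mid k}\mu(k/j)f(\omega_n^j) = \begin{cases} k\,c_{n/k}, & k\mid n,\\ 0, & k\nmid n.\end{cases}
\]
This buys three things: (i) both assertions fall out of one computation, with no need to iterate over primes and combine via CRT; (ii) it is an exact evaluation rather than a congruence, giving strictly more information (it identifies $\tfrac{1}{k}\sum_{j|k}\mu(k/j)f(\omega_n^j)$ as a $\mathcal{B}_2$-coefficient, making the ``number of orbits of size $k$'' interpretation in \cref{rem:numElements} transparent); (iii) it bypasses an imprecision in the paper's handling of $k \nmid n$, where the claim that one may choose $p\mid k$ with $p\nmid n$ fails whenever every prime of $k$ also divides $n$ but with smaller multiplicity (e.g.\ $n=6$, $k=4$) --- the paper would need the stronger form of \cref{lem:peCong} (equality whenever $e > v_p(n)$) to repair this, whereas your basis computation sidesteps the issue entirely.
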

\begin{proof}
Let $1 \leq k \leq n$ and write $k = mp^e$ where $p,m\in \mathbb{N}$, $p$ prime and $p \not | m$.
By Lemma \ref{lem:peCong} we have
\begin{align*}
\sum_{j|k} \mu(k/j)f(\nthroot_n^j) &= \sum_{j|m} \mu(k/(jp^{e-1}))f(\nthroot_n^{jp^{e-1}}) + \sum_{j|m} \mu(k/(jp^{e}))f(\nthroot_n^{jp^{e}}) \\ &\equiv \sum_{j|m} \mu(k/(jp^{e-1}))f(\nthroot_n^{jp^{e-1}}) + \sum_{j|m} \mu(k/(jp^{e}))f(\nthroot_n^{jp^{e-1}}) \Mod{p^e} \\ &\equiv 0 \Mod{p^e}.
\end{align*} \noindent
If $k \not |n$, then we may write $k= mp^e$ for some $m,p \in \mathbb{N}$ with $p$ prime such that $p \not |n$. Then by the second assertion in Lemma \ref{lem:peCong} the congruences above hold with equality and we are done.
\end{proof}
\begin{construction} \label{con:adhoc}
Let $X = \mathcal{O}_1 \sqcup \mathcal{O}_2 \sqcup \cdots \sqcup \mathcal{O}_m$ be a partition of a finite set $X$ into $m$ parts such that $|\mathcal{O}_i|$ divides $n$ for $i = 1, \dots, m$. Fix a total ordering on the elements of $\mathcal{O}_i$ for $i = 1, \dots, m$. Let $C_n$ act on $X$ by permuting each element $x \in \mathcal{O}_i$ cyclically with respect to the total ordering on $\mathcal{O}_i$ for $i = 1,\dots, m$. 
\end{construction} \noindent
This \textit{ad-hoc} cyclic action in \cref{con:adhoc} lacks combinatorial context and depends only on the choice of partition and total order.
\begin{theorem} \label{thm:cspCond}
Let $f(q)\in \mathbb{N}[q]$ and suppose $f(\nthroot_n^j) \in \mathbb{N}$ 
for each $j=1, \dots, n$. Let $X$ be any set of size $f(1)$. Then there exists an action of $C_n$ on $X$ such that
$(X,C_n,f(q))$ exhibits CSP if and only if for each $k|n$,
\begin{align} \label{poscond}
\sum_{j | k} \mu(k/j)f(\nthroot_n^j) \geq 0.
\end{align}
\end{theorem}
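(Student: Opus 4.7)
The plan is to convert the Möbius-style hypothesis into an explicit prescription for the orbit structure of the desired action, and then realize that structure via \cref{con:adhoc}. For each divisor $k \mid n$ set
\[
c_k \coloneqq \sum_{j \mid k} \mu(k/j)\, f(\nthroot_n^j).
\]
By \cref{lem:mobiusDiv} we already have $k \mid c_k$, and the hypothesis $c_k \geq 0$ then makes $a_k \coloneqq c_k/k$ a nonnegative integer; it will play the role of ``number of orbits of size $k$'' in the action to be constructed.

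The forward direction is essentially Möbius inversion: if the CSP holds then $f(\nthroot_n^j) = |X^{\sigma_n^j}|$, and sorting $X$ by orbit size yields $|X^{\sigma_n^j}| = \sum_{d \mid \gcd(n,j)} d\, N_d$, where $N_d$ is the number of orbits of size $d$. Specializing to $j = k$ with $k \mid n$ and inverting on the divisors of $k$ gives $c_k = k\, N_k \geq 0$.

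For the converse, first verify $\sum_{k \mid n} c_k = f(1) = |X|$ by a short Möbius calculation (the sum collapses to $f(\nthroot_n^n)$). Then apply \cref{con:adhoc} to any partition of $X$ into $a_k$ blocks of size $k$ for each $k \mid n$, producing an action of $C_n$ on $X$ with exactly $a_k$ orbits of size $k$. An element in a block of size $d$ is fixed by $\sigma_n^j$ iff $d \mid j$, so for every $j \in [n]$,
\[
|X^{\sigma_n^j}| \;=\; \sum_{d \mid \gcd(n,j)} d\, a_d \;=\; \sum_{d \mid \gcd(n,j)} c_d \;=\; f(\nthroot_n^{\gcd(n,j)}),
\]
the last equality by Möbius inverting the definition of $c_d$, which is valid because $\gcd(n,j) \mid n$.

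The main obstacle is then promoting this to $|X^{\sigma_n^j}| = f(\nthroot_n^j)$ for \emph{all} $j \in [n]$, not only for divisors of $n$. Writing $g = \gcd(n,j)$, both $\nthroot_n^j$ and $\nthroot_n^g$ are primitive $(n/g)$-th roots of unity, so they are $\mathrm{Gal}(\mathbb{Q}(\nthroot_{n/g})/\mathbb{Q})$-conjugate; since the hypothesis forces $f(\nthroot_n^j)$ and $f(\nthroot_n^g)$ to be integers, these conjugate algebraic numbers must agree. Equivalently, one may reduce $f$ modulo $q^n - 1$ and invoke \cref{prop:Desarmien}, which says the reduced polynomial is constant modulo each cyclotomic factor $\Phi_d(q)$ with $d \mid n$, giving the same conclusion.
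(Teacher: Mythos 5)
Your proposal is correct and follows essentially the same route as the paper: use \cref{lem:mobiusDiv} to see that each $S_k$ is a nonnegative multiple of $k$, then build an ad-hoc action via \cref{con:adhoc} with $S_k/k$ orbits of size $k$, and finish by Möbius inversion. Two small points where you are more explicit than the paper, both to the good: you prove the forward implication directly by Möbius inversion on the orbit-size counts rather than invoking Reiner--Stanton--White's Prop.~4.1, and you explicitly address the fact that the CSP condition must hold for \emph{all} exponents $j$, not just divisors of $n$. The paper's proof only verifies $|X^{\sigma_n^k}| = f(\nthroot_n^k)$ for $k\mid n$ and leaves implicit that $f(\nthroot_n^j) = f(\nthroot_n^{\gcd(n,j)})$; your Galois-conjugacy argument (equivalently, reduction modulo $\Phi_d$ via \cref{prop:Desarmien}, or the observation that $h_d(\nthroot_n^j)$ depends only on $\gcd(n,j)$) fills that small gap cleanly.
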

\begin{proof}
The forward direction follows from \cite[Prop. 4.1]{Reiner2004}. 
%Indeed if there exists a cyclic action of $C_n$ on a set $X$ such that $(W,C_n,f(q))$ is a CSP-triple, then letting 
%$S_j = |\{ x \in X: o(x) = j \}|$ for $j = 1,\dots, n$ we have
%\[
%f(\nthroot_n^k) = |X^{\sigma^k}| =  \sum_{j|k} S_j,
%\]
%for each $k = 1, \dots, n$. By Möbius inversion we get
%\[
%\sum_{j|k} \mu(k/j)f(\nthroot_n^j) = S_k \geq 0,
%\]
%for $k = 1, \dots, n$ as required. 
Conversely if we put 
\begin{align}\label{eq:columnSums}
S_k = \sum_{j|k} \mu(k/j)f(\nthroot_n^j)
\end{align}
for each $k= 1, \dots, n$ and consider $X$ of size $f(1)$, then by Möbius inversion
\[
|X| = f(\nthroot_n^n) = \sum_{j|n} S_j.
\]
Thus by hypothesis and Lemma \ref{lem:mobiusDiv}, we may partition $X$ into orbits, such that for each $k|n$, there are $\frac{1}{k}S_k$ orbits of size $k$. We then let $C_n$ act on $X$ as in \cref{con:adhoc}. The fixed points of $X$ under $\sigma_n^k \in C_n$ are given by the elements of order dividing $k$. This gives (by Möbius inversion)
\[
|X^{\sigma_n^k}| = \sum_{j|k} S_j = f(\nthroot_n^{k}).
\]
Hence $(X,C_n,f(q))$ exhibits CSP.
\end{proof} \noindent
\begin{remark} \label{rem:numElements}
The sums $S_k$ in \eqref{eq:columnSums} represent the number of elements with order $k$ under the action of $C_n$.
\end{remark} \noindent
\begin{example} \label{ex:counterex}
The following example demonstrates that even if $f(q)\in \mathbb{N}[q]$ 
satisfies $f(\nthroot_n^j) \in \mathbb{N}$ for all $j = 1,\dots, n$, 
there might not be an associated cyclic action complementing $f(q)$ to a CSP.

Let $f(q) = q^5 + 3q^3 + q + 10$. Then $f(\nthroot_6^j)$ takes 
values $8,12,5,12,8,15$ for $j = 1,\dots, 6$. 
On the other hand $S_k = \sum_{j|k} \mu(k/j) f(\nthroot_6^j)$ takes 
values $8,4,-3,0,0,6$ for $k = 1, \dots, 6$. 
Since we cannot have a negative number of elements of order $3$, 
there is no action of $C_6$ on a set $X$ of size $f(1) = 15$ such that $(X,C_6,f(q))$ is a CSP-triple.
\end{example} \noindent
Rao and Suk \cite{Rao2017} generalized the notion of cyclic sieving to arbitrary groups with finitely generated representation ring, 
so called \emph{$G$-sieving}. 
In particular, Berget, Eu and Reiner \cite{Berget2011} considered the case where $G$ is 
an Abelian group, whence $G  \cong C_{n_1} \times \cdots \times C_{n_m}$, acting pointwise on a set $X_1 \times \cdots \times X_m$. 
Unfortunately $G$-sieving depends in general on the particular choices of representations $\rho_i$ of $G$ over $\mathbb{C}$ generating the 
representation ring. However, given the characterization in \cref{thm:multmodbasis} it would be interesting to 
understand what conditions are necessary and sufficient for a polynomial $f \in M(n_1,\dots, n_m)$ to be complemented 
to a $G$-sieving phenomenon for an Abelian group $G \cong C_{n_1} \times \cdots \times C_{n_m}$ with respect to 
the canonical representations sending the generator $\sigma_{n_i}$ of $C_{n_i}$ to $\omega_{n_i}$.

\section{Applications} \label{sec:apps}
In this section we demonstrate how one can use \cref{thm:cspCond} to
find new cyclic sieving phenomena arising from natural polynomials.

By \cref{thm:cspCond} any polynomial $f(q) \in \mathbb{N}[q]$ such that $f(\omega_n^j) \in \mathbb{N}$
for $j= 1,\dots, n$ satisfying the positivity condition (\ref{poscond}), can be
completed to a CSP with an ad-hoc cyclic action. Although this action lacks combinatorial context,
it often helps to know that a CSP can exist even in principle,
particularly if one is considering a combinatorial set where the cyclic action is not immediately apparent.
The following example illustrates this point for the polynomial $C_n(q) \coloneqq \frac{1}{[n+1]_q} \qbinom{2n}{n}_q$ which is generated by statistics on multiple combinatorial (Catalan) objects,
but where the naturalness of the action varies depending on the object under consideration.
\begin{example}
Stump \cite{Stump2009} showed that $C_n(q) = \sum_{\sigma \in \mathfrak{S}_n(231)} q^{\maj(\sigma) + \maj(\sigma^{-1})}$.
There is no obvious natural cyclic action on $\mathfrak{S}_n(231)$ that is compatible with $C_n(q)$.
However we can check the positivity condition \eqref{poscond} in \cref{thm:cspCond} to reveal that a CSP is nevertheless present for $C_n(q)$ with an ad-hoc cyclic action on $\mathfrak{S}_n(231)$.
Indeed rewriting $C_n(q) = \frac{1}{[2n+1]_q} \qbinom{2n+1}{n+1}_q$ and using \cite[Prop. 4.2 (iii)]{Reiner2004} we have for $j|n$,
\[
C_n(\omega_n^j) = \begin{cases}  \binom{2j}{j}, & \text{if } j < n, \\  \frac{1}{n+1}\binom{2n}{n}, & \text{if } j = n. \end{cases}
\]
By Wallis formula, $\prod_{n=1}^{\infty} \left ( 1 - \frac{1}{4n^2} \right ) = \frac{2}{\pi}$, the sequences 
\begin{align*}
2n \left ( \binom{2n}{n} \frac{1}{4^n} \right )^2 &= \frac{1}{2} \prod_{j=2}^n \left (1 + \frac{1}{4j(j-1)} \right ), \\ (2n+1) \left ( \binom{2n}{n} \frac{1}{4^n} \right )^2 &= \prod_{j=1}^n \left ( 1 - \frac{1}{4j^2} \right )  
\end{align*} \noindent
monotonically increase and decrease respectively towards $\frac{2}{\pi}$ as $n \to \infty$. Thus
\[
\frac{4^n}{\sqrt{\pi(n+ 1/2)}} \leq \binom{2n}{n} \leq \frac{4^n}{\sqrt{\pi n}}.
\]
A trivial bound for the number of divisors of $n$, excluding $n$, is given by $2\sqrt{n}-1$.
Hence for each divisor $k<n$ we have
\begin{align*}
\sum_{j|k} \mu(k/j) C_n(\omega_n^j) &= \sum_{j|k} \mu(k/j) \binom{2j}{j} \\ &\geq \binom{2k}{k} - \sum_{\substack{j|k \\ j < k}} \binom{2j}{j} \\ &\geq \frac{4^k}{\sqrt{\pi(k+1/2)}} - (2\sqrt{k}-1) \frac{4^{k/2}}{\sqrt{\pi(k/2)}} \geq 0.
\end{align*} \noindent
Moreover for $k = n$ we have by a similar calculation that
\[
\sum_{j|n} \mu(n/j) C_n(\omega_n^j) \geq \frac{4^n}{(n+1)\sqrt{\pi(n+1/2)}} - (2\sqrt{n}-1) \frac{4^{n/2}}{\sqrt{\pi(n/2)}} \geq 0,
\]
for $n \geq 5$. The required inequality can be verified explicitly by hand for $n < 5$.
Hence $C_n(q)$ exhibits CSP with an ad-hoc cyclic action on $\mathfrak{S}_n(231)$. 

With this evidence one could now either proceed to search for a natural cyclic
action on $\mathfrak{S}_n(231)$ matching the orbit structure of the ad-hoc
cyclic action, or find a natural cyclic action on an object in bijection
with $\mathfrak{S}_n(231)$. In this case there happens to exist known candidates \emph{e.g.}
the set of Dyck paths $\text{Dyck}(n)$ of semi-length $n$ where $C_n$ acts by
changing peaks to valleys (and vice versa) from left to right whenever possible,
or the set of triangulation of a regular $(n+2)$-gon where $C_{n+2}$ acts by
rotating the triangulation.
In the latter case we instead lack a simple natural
statistic (as opposed to a natural action) on the set of triangulations that generates $C_n(q)$.
\end{example} \noindent

\subsection{A new CSP with stretched Schur polynomials}

In this section we conjecture a new cyclic sieving phenomenon involving stretched Schur polynomials. 
We prove our conjecture in the case of certain rectangular shapes for which
it is straightforward to explicitly compute the data needed to verify 
the positivity condition \eqref{poscond} in \cref{thm:cspCond}. 
We begin by recalling the basic definitions required to state the conjecture.  

A \emph{partition} $\lambda = (\lambda_1, \dots, \lambda_r)$ is a finite weakly 
decreasing sequence of non-negative integers $\lambda_1 \geq \lambda_2 \geq \cdots \geq \lambda_r  \geq  0$. 
The \emph{parts} of $\lambda$ are the positive entries and the number of positive parts is the \emph{length} of $\lambda$, denoted $l(\lambda)$. The quantity $|\lambda| \coloneqq \lambda_1 + \dots + \lambda_r$ is called the \emph{size} of $\lambda$. The \emph{empty partition} $\emptyset$ is the partition with no parts.
We use exponents to denote multiplicities \emph{e.g.} $\lambda = (5,3,3,2,1,1,1) = (5,3^2,2,1^3)$. 
Scalar multiplication on partitions is performed elementwise \emph{e.g.} with $n \in \mathbb{N}$ and $\lambda$ as above we have $n\lambda = (5n,(3n)^2,2n,n^3)$. If $\mu = (\mu_1,\dots, \mu_r)$ is a partition such that $\lambda_i \geq \mu_i$ for all $i = 1,\dots, r$ then we say that $\mu \subseteq \lambda$. This is called the \emph{inclusion order} on partitions.

Partitions are commonly visualized in at least two different ways.
The first and most common way to represent a partition is via its Young diagram.
A \emph{skew Young diagram} of \emph{shape} $\lambda/\mu$ is an arrangement of boxes in 
the plane with coordinates given by $\{ (i,j) \in \mathbb{Z}^2 : \mu_i \leq j \leq \lambda_i \}$. 
The first coordinate represents the row and the second coordinate the column. 
If $\mu = \emptyset$, then we simply write $\lambda$ instead of $\lambda/\mu$ and refer 
to the corresponding skew Young diagram as the \emph{(regular) Young diagram} of $\lambda$. 
A \emph{border strip} (or \emph{rim hook}) of size $d$ is a connected skew Young 
diagram consisting of $d$ boxes and containing no $2 \times 2$ square. The \emph{height} of a border strip is one less than its number of rows. A \emph{border strip tableau} of shape $\lambda/\mu$ and type $\alpha = (\alpha_1, \dots, \alpha_d)$ is a sequence $\mu = \lambda^1 \subset \lambda^2 \subset \cdots \subset \lambda^r = \lambda$ such that $\lambda^i/\lambda^{i-1}$ is a border strip of size $\alpha_i$.

A second way to visually represent a partition $\lambda$ is via an \emph{abacus} with $m \geq r$ beads: 
Let $d \in \mathbb{N}$. For $i = 1,\dots, m$, write $\lambda_i + m - i = s+dt$, with $0 \leq s \leq d-1$, 
and place a bead on the $s^{th}$ runner in the $t^{th}$ row. 
The operation of sliding a bead one row upwards on its runner into a vacant 
position corresponds to removing a border strip of size $d$ from $\lambda$. 
Sliding all beads up as far as possible produces an abacus representation of the \emph{$d$-core} 
partition of $\lambda$, a partition from which no further border strip tableaux of size $d$ can be removed. 
It is worth mentioning that the $d$-core of $\lambda$ is independent of the way in which border strip tableaux are removed.
For $i=0,1,\dots, d-1$, let $\lambda_j^{(i)}$ be the number of unoccupied
positions on the $i^{th}$ runner above the $j^{th}$ bead from the bottom.
Then $\lambda^i = (\lambda_1^{(i)},\lambda_2^{(i)},\dots, \lambda_{d}^{(i)})$ is a partition and the $d$-tuple $[\lambda^{(0)},
\lambda^{(1)},\dots, \lambda^{(d-1)}]$ is called the \emph{$d$-quotient} of $\lambda$.
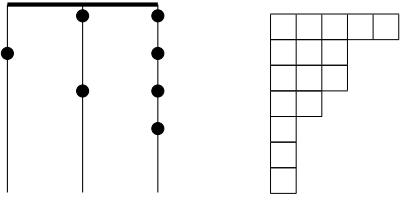
\begin{figure}
\[
\begin{tikzpicture}
\draw [ultra thick] (0,0) --(2,0);
\draw [thin] (0,0) -- (0,-2.5);
\draw [thin] (1,0) -- (1,-2.5);
\draw [thin] (2,0) -- (2,-2.5);
\draw[fill=black] (2,-1.5-0.15) circle (0.08cm);
\draw[fill=black] (2,-1-0.15) circle (0.08cm);
\draw[fill=black] (1,-1-0.15) circle (0.08cm);
\draw[fill=black] (2,-0.5-0.15) circle (0.08cm);
\draw[fill=black] (0,-0.5-0.15) circle (0.08cm);
\draw[fill=black] (2,0-0.15) circle (0.08cm);
\draw[fill=black] (1,0-0.15) circle (0.08cm);
\end{tikzpicture}
\qquad
\qquad
{
\Yboxdim10pt
\Yvcentermath0
\yng(5,3,3,2,1,1,1)
}
\]
\caption{The abacus representation of $\lambda = (5,3^2,2,1^3)$ with $m= 7$ beads and $d=3$ runners,
next to the Young diagram representation of $\lambda$.}
\end{figure}

A \emph{semi-standard Young tableau (SSYT)} is a Young diagram whose boxes are
filled with non-negative integers, such that each row is weakly increasing and
each column is strictly increasing. Denote the set of SSYT of shape $\lambda$
with entries in $\{0,\dots, m-1\}$ by $\SSYT(\lambda,m)$. Given $T \in \SSYT(\lambda,m)$,
the \emph{type} of $T$ is the vector $\alpha(T) = (\alpha_0(T),\alpha_1(T),\dots, \alpha_{m-1}(T))$
%given by $\alpha_k(T) = |\{b_{ij} \in T : b_{ij} = k\}|$ for $k = 0,\dots, m-1$. In other words
where $\alpha_k(T)$ counts the number of boxes of $T$ containing the number $k$.

The \emph{Schur polynomial} is defined as
\[
s_{\lambda}(x_0,\dots, x_{m-1}) = \sum_{T \in \SSYT(\lambda,m)} x_0^{\alpha_0(T)}x_1^{\alpha_1(T)} \cdots x_{m-1}^{\alpha_{m-1}(T)}.
\]
The polynomial $s_{\lambda}(x_0,\dots, x_{m-1})$ is symmetric and has several alternative definitions, see \cite{StanleyEnumVol2}. The \emph{principal specialization} of $s_{\lambda}(x_0,\dots, x_{m-1})$ is given by
\[
s_{\lambda}(1,q,q^2,\dots, q^{m-1}) = \sum_{T \in \SSYT(\lambda,m)} q^{|T|},
\] 
where $|T|$ denotes the sum of all entries in $T$.
The following explicit formula is referred to as the \emph{q-hook-content formula} and is due to Stanley (see \cite[Thm 7.21.2]{StanleyEnumVol2}),
\begin{align}
s_{\lambda}(1,q,q^2,\dots, q^{m-1}) = q^{b(\lambda)}\prod_{(i,j) \in \lambda} \frac{[m + c_{i,j}]_q}{[h_{i,j}]_q},
\end{align} \noindent
where $b(\lambda) = \sum_{i=1}^r (i-1)\lambda_i$, $c_{i,j} = j-i$ (the \emph{content}) and $h_{i,j}$ is defined as the number of boxes in $\lambda$ to
the right of $(i, j)$ in row $i$ plus the number of boxes below $(i, j)$ in column $j$ plus $1$ (the \emph{hook length}). In particular
\begin{align} \label{hcontent}
|\SSYT(\lambda,m)| = s_{\lambda}(1^m)= \prod_{(i,j)\in \lambda} \frac{m+c_{i,j}}{h_{i,j}}.
\end{align} \noindent
If $G$ is a group and $V$ a (finite-dimensional) vector space over $\mathbb{C}$, 
then a \emph{representation} of $G$ is a group homomorphism $\rho:G \to \text{GL}(V)$ 
where $\text{GL}(V)$ is the group of invertible linear transformations of $V$. 
A representation $\rho:G \to \text{GL}(V)$ is \emph{irreducible}
if it has no proper subrepresentation $\rho|_W:G \to \text{GL}(W)$, $0 < W < V$ closed under the action of $\{ \rho(g) : g \in G \}$.
The \emph{character} of $G$ on $V$ is a function $\chi:G \to \mathbb{C}$ defined by $\chi(g) = \text{tr}(\rho(g))$.
Note that characters are invariant under conjugation by $G$.
A character $\chi$ is said to be \emph{irreducible} if the underlying representation is irreducible.
If $G = \mathfrak{S}_m$, then the irreducible characters $\chi^{\lambda}$ of $\mathfrak{S}_m$
are indexed by partitions $\lambda$ of weight $m$ and may be computed
combinatorially (on each conjugacy class of type $\alpha$ in $\mathfrak{S}_m$)
using the \emph{Murnaghan--Nakayama rule} \cite[Thm 7.17.3]{StanleyEnumVol2}
\begin{align} \label{murnaka}
\chi_{\alpha}^{\lambda} = \sum_{T \in \text{BST}(\lambda, \alpha)} (-1)^{\text{ht}(T)},
\end{align} \noindent
where the sum runs over all border strip tableaux $\text{BST}(\lambda, \alpha)$
of shape $\lambda$ and type $\alpha$ and $\text{ht}(T)$ is the sum of all heights
of the border strips in $T$. In particular this implies $\chi^{\lambda}$ takes integer values.

The following theorem provides an expression for the root of unity 
evaluation of the principal specialization $s_{\lambda}(1,q,\dots, q^{m-1})$.

\begin{theorem}[Reiner--Stanton--White \cite{Reiner2004}] \label{rootunityspec}
Let $d|m$ and $\omega_d$ be a primitive $d^{th}$ root of unity.
Then $s_{\lambda}(1,\omega_d, \dots, \omega_d^{m-1})$ is zero unless the $d$-core of $\lambda$ is empty, in which case
\[
\schurS_{\lambda}(1,\omega_d,\omega_d^2,\dotsc,\omega_d^{m-1}) 
= sgn(\chi_{d^{|\lambda|/d}}^{\lambda}) \prod_{i=0}^{d-1} \schurS_{\lambda^{(i)}}(1^{m/d}),
\]
where $\chi^{\lambda}$ is the irreducible character of the
symmetric group $\mathfrak{S}_{|\lambda|}$ indexed by $\lambda$. 
\end{theorem}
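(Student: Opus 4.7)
The plan is to expand $s_\lambda$ in the power-sum basis via Frobenius's formula $s_\lambda = \sum_{\mu \vdash |\lambda|} z_\mu^{-1} \chi^\lambda_\mu\, p_\mu$ and evaluate at $(1, \omega_d, \ldots, \omega_d^{m-1})$. Because $d \mid m$, this tuple consists of $m/d$ copies of each $d^{\text{th}}$ root of unity, so a direct geometric-sum computation gives
\[
p_k(1, \omega_d, \ldots, \omega_d^{m-1}) = \begin{cases} m & \text{if } d \mid k, \\ 0 & \text{otherwise.} \end{cases}
\]
Only $\mu$ with every part divisible by $d$ contribute, and writing such a $\mu$ as $d\nu$ with $\nu \vdash |\lambda|/d$ and using $z_{d\nu} = d^{\ell(\nu)} z_\nu$, I obtain
\[
s_\lambda(1, \omega_d, \ldots, \omega_d^{m-1}) = \sum_{\nu \vdash |\lambda|/d} \frac{(m/d)^{\ell(\nu)}}{z_\nu}\, \chi^\lambda_{d\nu}.
\]

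The next step is to analyze $\chi^\lambda_{d\nu}$ via the Murnaghan--Nakayama rule \eqref{murnaka} combined with the $m$-bead $d$-runner abacus introduced earlier. Removing a border strip of size $dk$ from $\lambda$ corresponds to sliding a single bead up $k$ vacant positions on one runner, with the height of the strip matching the number of empty positions passed on neighbouring runners. Hence a border strip tableau of shape $\lambda$ and type $d\nu$ decomposes into a choice of a runner for each strip followed by $d$ independent bead-slide sequences, one per runner, which are exactly border strip tableaux on the quotient pieces $\lambda^{(i)}$ of some type $\rho^{(i)}$; no such tableau exists unless the $d$-core of $\lambda$ is empty, accounting for the vanishing in that case. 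Tracking the sign shows that the global height differs from the sum of local heights by a fixed quantity depending only on $\lambda$, and taking $\nu = (1^{|\lambda|/d})$ identifies that fixed sign as $\mathrm{sgn}(\chi^\lambda_{d^{|\lambda|/d}})$. Summing over how the parts of $\nu$ distribute among the runners yields the character identity
\[
\chi^\lambda_{d\nu} = \mathrm{sgn}(\chi^\lambda_{d^{|\lambda|/d}}) \sum_{\nu = \rho^{(0)} \sqcup \cdots \sqcup \rho^{(d-1)}} \binom{\ell(\nu)}{\ell(\rho^{(0)}), \ldots, \ell(\rho^{(d-1)})} \prod_{i=0}^{d-1} \chi^{\lambda^{(i)}}_{\rho^{(i)}}.
\]

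Finally, substituting this expression into the previous display and using the elementary identity $\frac{1}{z_\nu}\binom{\ell(\nu)}{\ell(\rho^{(0)}), \ldots, \ell(\rho^{(d-1)})} = \prod_i \frac{1}{z_{\rho^{(i)}}}$ together with $(m/d)^{\ell(\nu)} = \prod_i (m/d)^{\ell(\rho^{(i)})}$, the sum factors into a product of $d$ independent sums, each of which is $s_{\lambda^{(i)}}(1^{m/d})$ by Frobenius's formula applied in reverse. The main obstacle is the middle step: one must verify that the abacus decomposition is a true bijection between border strip tableaux of type $d\nu$ on $\lambda$ and $d$-tuples of such tableaux on the quotient pieces, and then track the total height carefully enough to isolate the global sign as $\mathrm{sgn}(\chi^\lambda_{d^{|\lambda|/d}})$. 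This is essentially the classical James--Kerber character formula for wreath-product characters; once secured, the rest of the argument is a mechanical repackaging.
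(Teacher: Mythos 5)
The paper does not prove this result; it is quoted verbatim from Reiner--Stanton--White \cite{Reiner2004}, so there is no internal proof to compare against. Your approach---expanding $s_\lambda$ in the power-sum basis, observing that only $p_\mu$ with all parts divisible by $d$ survive the root-of-unity specialization, and then factoring $\chi^\lambda_{d\nu}$ through the $d$-quotient via the abacus/Murnaghan--Nakayama machinery---is essentially the standard Littlewood quotient argument (as in James--Kerber), and it is a correct route to this theorem.

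However, the identity you write in the last step is wrong as stated, and so is the weighting in your character factorization. The equality
\[
\frac{1}{z_\nu}\binom{\ell(\nu)}{\ell(\rho^{(0)}), \dotsc, \ell(\rho^{(d-1)})} = \prod_{i=0}^{d-1}\frac{1}{z_{\rho^{(i)}}}
\]
fails already for $\nu=(2,1)$, $d=2$, $\rho^{(0)}=(2)$, $\rho^{(1)}=(1)$: the left side is $\tfrac{1}{2}\binom{2}{1,1}=1$, the right side is $\tfrac{1}{2}$. The correct weight records, for each summand $(\rho^{(0)},\dotsc,\rho^{(d-1)})$ with $\nu=\rho^{(0)}\cup\dotsb\cup\rho^{(d-1)}$, the number of functions $[\ell(\nu)]\to\{0,\dotsc,d-1\}$ assigning each strip to a runner while inducing that multiset decomposition; writing $m_k(\cdot)$ for the number of parts equal to $k$, this count is $\prod_k \binom{m_k(\nu)}{m_k(\rho^{(0)}),\dotsc,m_k(\rho^{(d-1)})}$, a product of multinomials over distinct part sizes, not a single multinomial over $\ell(\nu)$. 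With $z_\nu=\prod_k k^{m_k(\nu)}m_k(\nu)!$ one then has the true identity
\[
\frac{1}{z_\nu}\prod_k \binom{m_k(\nu)}{m_k(\rho^{(0)}), \dotsc, m_k(\rho^{(d-1)})} = \prod_{i=0}^{d-1}\frac{1}{z_{\rho^{(i)}}},
\]
and the abacus factorization of $\chi^\lambda_{d\nu}$ must carry the same product-of-multinomials weight. With these two corrections the computation does factor as claimed and yields $\mathrm{sgn}(\chi^\lambda_{d^{|\lambda|/d}})\prod_i s_{\lambda^{(i)}}(1^{m/d})$. As written, though, your intermediate character identity is false (it overcounts runner assignments whenever $\nu$ has parts of different sizes), and the ``elementary identity'' invoked to close the computation is also false; they happen to compensate for one another, but each link in the chain is incorrect and would not survive scrutiny on its own.
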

%Multiplying $\lambda$ with $n$, and letting $m=n$, we get 
\begin{lemma} \label{lem:nRootUnitySpec}
Suppose $\omega_d$ is a primitive $d^{th}$ root of unity with $d|m$, then
\[
\schurS_{n\lambda}(1,\omega_d,\omega_d^2,\dotsc,\omega_d^{m-1}) = \prod_{i=0}^{d-1} \schurS_{(n\lambda)^{(i)}}(1^{m/d}) \in \mathbb{N}.
\]
\end{lemma}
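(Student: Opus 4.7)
The plan is to apply \cref{rootunityspec} (the Reiner--Stanton--White formula) directly to the partition $n\lambda$. This yields
\[
\schurS_{n\lambda}(1, \omega_d, \omega_d^2, \ldots, \omega_d^{m-1}) = sgn(\chi^{n\lambda}_{d^{|n\lambda|/d}}) \prod_{i=0}^{d-1} \schurS_{(n\lambda)^{(i)}}(1^{m/d})
\]
whenever the $d$-core of $n\lambda$ is empty, with both sides vanishing otherwise (the vanishing of the right-hand side in this case follows from an abacus bead-count argument: if the distribution of beads on the $d$ runners is unequal, then some $(n\lambda)^{(i)}$ has more than $m/d$ parts, forcing $\schurS_{(n\lambda)^{(i)}}(1^{m/d}) = 0$). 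Since each factor $\schurS_\mu(1^k) = |\SSYT(\mu,k)|$ is a non-negative integer, the product is manifestly in $\mathbb{N}$; hence the lemma reduces to showing that the sign factor $sgn(\chi^{n\lambda}_{d^{|n\lambda|/d}})$ equals $+1$ whenever the $d$-core is empty.

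To control this sign, I invoke the Murnaghan--Nakayama rule \eqref{murnaka} and write $\chi^{n\lambda}_{d^{|n\lambda|/d}} = \sum_T (-1)^{\height(T)}$, the sum ranging over border strip tableaux of shape $n\lambda$ with $|n\lambda|/d$ rim hooks each of size $d$. A standard fact (the so-called $d$-sign invariance, valid precisely when the $d$-core of $n\lambda$ is empty) is that every such BST has the same height parity, so the sign is $(-1)^{\height(T_0)}$ for any single BST $T_0$. It therefore suffices to exhibit one BST of shape $n\lambda$ and type $d^{|n\lambda|/d}$ with even total height. In the cleanest case, when $d \mid n$, each row of $n\lambda$ has length divisible by $d$ and one can tile $n\lambda$ row-by-row using only horizontal $1 \times d$ strips, each of height $0$; the resulting $T_0$ has total height $0$.

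The main obstacle is the sign argument in more general cases where $d \nmid n$ but the $d$-core of $n\lambda$ happens to be empty (and the equation is therefore non-trivial). Here the horizontal-strip tiling is not available, and one must construct $T_0$ via a runner-by-runner analysis on the abacus: starting from the empty $d$-core configuration (with $m/d$ beads per runner), one slides beads downward in a prescribed order to build up $n\lambda$, each slide contributing a rim hook whose height counts the beads crossed on other runners. The stretching by $n$ imposes enough structure on the resulting bead positions $\{n\lambda_i + m - i\}$ to control these parities and ensure the total height is even. Once the sign is established to be $+1$, the lemma follows directly from \cref{rootunityspec}, and the non-negativity of each $\schurS_{(n\lambda)^{(i)}}(1^{m/d})$ completes the conclusion.
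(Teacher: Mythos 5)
Your proposal follows essentially the same route as the paper: apply \cref{rootunityspec} to $n\lambda$, reduce to showing the sign factor $\mathrm{sgn}(\chi^{n\lambda}_{d^{|n\lambda|/d}})$ is $+1$, invoke a cancellation-free property of the Murnaghan--Nakayama rule so that one positive-sign border-strip tableau settles the sign, and produce such a tableau by a horizontal tiling. What you call ``$d$-sign invariance'' is exactly what the paper gets from citing White's result (together with a reference to Pak); the paper also produces the horizontal tiling ``since $d|n$.''

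The part you flag as ``the main obstacle'' --- controlling the sign when $d\nmid n$ but the $d$-core of $n\lambda$ is empty --- is not a gap in the proof that a clever runner-by-runner argument would close; it is a hint that the lemma as stated is false without the extra hypothesis $d\mid n$. For instance take $n=1$, $\lambda=(2,1)$, $d=3$, $m=3$: the $3$-core of $(2,1)$ is empty and its $3$-quotient is $[\emptyset,(1),\emptyset]$, so the right-hand side equals $1$, yet $\schurS_{(2,1)}(1,\omega_3,\omega_3^2) = -1$ (the unique size-$3$ border-strip tableau of shape $(2,1)$ has height $1$, so the sign is $-1$). Hence the lemma needs $d\mid n$; the paper's proof uses $d\mid n$ silently in the line ``this is possible since $d|n$'' without listing it among the hypotheses. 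In every application (notably \cref{thm:rectSchur}, where $d=n/j$ for $j\mid n$) one indeed has $d\mid n$, so nothing downstream is affected, but you should add the hypothesis rather than attempt to prove a false general statement.

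One more small note: ``every BST of shape $n\lambda$ with strips of size $d$ has the same height parity whenever the $d$-core is empty'' is not a tautology that holds for arbitrary shapes --- it is precisely the nontrivial cancellation-free result that the paper pins down by citation. Your proposal asserts it as a ``standard fact'' without a reference; in the paper this step is the content of White's Corollary 10.
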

\begin{proof}
If $d$ does not divide $n|\lambda|$, 
then $\schurS_{n\lambda}(1,\omega_d,\omega_d^2,\dotsc,\omega_d^{m-1}) = 0$ by \cref{rootunityspec}, so there is nothing to prove. 
Thus we may assume $d$ divides $n|\lambda|$.
By \cref{rootunityspec} we only need to verify that $\chi_{d^{n|\lambda|/d}}^{n\lambda} \geq 0$. 
A result by White \cite[Cor. 10]{White1983} (see also \cite[Thm. 3.3]{Pak2000Ribbon}),
implies that the Murnaghan--Nakayama rule \eqref{murnaka} is cancellation-free in this instance.
Furthermore, it is clear that there is a border-strip tableau of shape $n\lambda$ with border-strips of size $d$
with positive sign.
For example, take all strips to be horizontal --- this is possible since $d|n$.
\end{proof}

We are now ready to state our conjecture.
\begin{conjecture} \label{conj:schurpolyconj}
Let $n, m \in \setN$ and let $\lambda$ be a partition.
Then the triple 
\[
(\SSYT(n\lambda,m), C_n, \schurS_{n \lambda}(1,q,q^2,\dotsc,q^{m-1}) )
\] 
exhibits a CSP for some $C_n$ acting on $\SSYT(n\lambda,m)$.
\end{conjecture}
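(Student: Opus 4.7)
My plan is to apply \cref{thm:cspCond} to $f(q) = \schurS_{n\lambda}(1,q,q^2,\dotsc,q^{m-1})$. Since $f(q) \in \mathbb{N}[q]$ is immediate from the tableau-sum definition of the Schur polynomial, what remains to verify is (i) that $f(\omega_n^j) \in \mathbb{N}$ for every $j = 1, \dots, n$, and (ii) that the Möbius sums
\[
S_k = \sum_{j \mid k} \mu(k/j)\, f(\omega_n^j)
\]
are non-negative for every $k \mid n$.

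For (i), set $d = n/\gcd(n,j)$ so that $\omega_n^j$ is a primitive $d$-th root of unity and one is evaluating $\schurS_{n\lambda}(1,\omega_d,\omega_d^2,\dotsc,\omega_d^{m-1})$. When $d \mid m$, \cref{lem:nRootUnitySpec} immediately gives a non-negative integer. The case $d \nmid m$ is more delicate: here one would analyze the multiset of $d$-th roots being evaluated (with multiplicities $\lceil m/d \rceil$ on residues below $m \bmod d$ and $\lfloor m/d \rfloor$ on the rest) and combine this with the Murnaghan--Nakayama sign-cancellation argument from the proof of \cref{lem:nRootUnitySpec} to either conclude vanishing or a non-negative integer value.

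The central obstacle is (ii). Ideally one would identify $S_k$ with the number of SSYTs of shape $n\lambda$ whose $C_n$-orbit has cardinality exactly $k$, but without the action in hand this is circular. Two strategies suggest themselves: first, a direct combinatorial interpretation of $S_k$ via the $d$-quotient decomposition underlying \cref{rootunityspec}, exploiting the factorization $f(\omega_n^j) = \prod_{i=0}^{d-1} \schurS_{(n\lambda)^{(i)}}(1^{m/d})$ to set up a sign-reversing involution on the resulting product sets; second, an explicit estimate showing that the leading term $f(\omega_n^k)$ dominates the combined negative contributions $\sum_{j \mid k,\, j < k} |f(\omega_n^j)|$, in the spirit of the Catalan example that precedes this subsection. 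For rectangular $\lambda$ the hook-content formula \eqref{hcontent} makes every $d$-quotient specialization explicit, so the dominance estimate should close the argument and yield \cref{thm:rectSchur}; for general $\lambda$, however, a uniform proof appears to require either a representation-theoretic interpretation of $S_k$ (perhaps via Kostka--Foulkes reciprocity at roots of unity or Springer regular elements) or the actual construction of a promotion-like action, the latter being precisely the combinatorial work that the framework of \cref{thm:cspCond} is designed to let one sidestep.
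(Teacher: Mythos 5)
This statement is a conjecture; the paper does not prove it in general but only verifies it computationally for $|\lambda|\leq 6$, $m\leq 6$, $n\leq 12$, and proves the special case of rectangular shapes $\lambda=(a^b)$ with $n\mid b$ and $n\mid m$ in \cref{thm:rectSchur}. Your outline is a strategy sketch rather than a proof, but it correctly identifies both the method and its limits, and in the rectangular case your second strategy (showing $f(\omega_n^k)$ dominates $\sum_{j\mid k,\,j<k}f(\omega_n^j)$ via the hook-content formula) is precisely what the paper does: it uses \cref{lem:rectQuotient} and \cref{lem:rectSchurEval} to make each $d$-quotient specialization a product of binomial ratios, then bounds the deficit by $2\sqrt{k}-1$ times the largest proper-divisor term, closing the estimate with the two technical inequalities in the appendix (\cref{lem:binomineq} and \cref{lem:prodBinomLwrBnd}). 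One caveat worth flagging: the restriction $n\mid m$ in \cref{thm:rectSchur} exists exactly so that $d\mid m$ for every relevant $d$, which sidesteps the $d\nmid m$ case you rightly call ``more delicate'' --- in fact \cref{rootunityspec} as stated requires $d\mid m$, so a proof of the full conjecture would indeed need the refinement you gesture at, or else a genuinely different argument. You are also correct that a uniform proof for general $\lambda$ remains open and is likely to require either a combinatorial interpretation of the Möbius sums $S_k$ or an explicit promotion-like action, which the paper leaves as future work.
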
 \noindent
We believe that a natural action is realized by some type of
promotion on semi-standard Young tableaux similar to \cite{Rhoades2010}.
In the case $\lambda=(1)$ we have
\[
 \schurS_{n\lambda}(1,q,q^2,\dotsc,q^m) = \qbinom{n+m-1}{n}_q
\]
and this polynomial exhibits a cyclic sieving phenomenon under $C_n$, see \cite{Reiner2004}.

We have verified \cref{conj:schurpolyconj} using \cref{thm:cspCond}
for all partitions $\lambda$ such that $|\lambda| \leq 6$, all $m\leq 6$ and all $n\leq 12$.

Below we prove the conjecture for certain rectangular shapes $\lambda$.

\begin{lemma}\label{lem:rectQuotient}
The $n$-quotient of the rectangular shape $(na)^{nb+r}$ with $0 \leq r <n$ is given by 
\[
 [\underbrace{a^{b},a^b,\dotsc,a^b}_{\text{$n-r$ times}},\underbrace{a^{b+1},a^{b+1},\dotsc,a^{b+1}}_{\text{$r$ times}}].
\]
%Furthermore, the $d$-quotient of $(na)^b$ with $d|n$
%is a series of rectangular shapes, all with parts of size $(n/d)a$
%and either with $\lfloor b/d \rfloor$ or $\lceil b/d \rceil$ parts.
\end{lemma}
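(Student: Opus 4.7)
The plan is to read off the $n$-quotient directly from the abacus representation of $\lambda = (na)^{nb+r}$. I choose $m = n(b+c)$ beads, where $c$ is any integer large enough that $m$ exceeds the number of parts of $\lambda$, so that each of the $n$ runners carries exactly $b+c$ positions. The first step is to observe that the bead positions $\{\lambda_i + m - i : 1 \leq i \leq m\}$ split into two intervals of consecutive integers: the lower interval $\{0, 1, \dots, nc-r-1\}$ comes from the trailing zero parts of $\lambda$, while the upper interval $\{nc+na-r, nc+na-r+1, \dots, n(a+b+c)-1\}$ comes from the $nb+r$ nonzero parts of size $na$.

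Next I distribute these positions across the $n$ runners by reducing modulo $n$. A short calculation shows that the lower interval occupies rows $0, 1, \dots, c-1$ on each runner $s \in \{0, \dots, n-r-1\}$ and rows $0, 1, \dots, c-2$ on each runner $s \in \{n-r, \dots, n-1\}$. The upper interval begins on runner $n-r$ (or runner $0$ when $r=0$) at row $a+c-1$, and wrapping around it fills rows $a+c, \dots, a+b+c-1$ on the first group of runners and rows $a+c-1, \dots, a+b+c-1$ on the second group. A tally then confirms that every runner carries the required $b+c$ beads.

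Finally I convert each runner's bead pattern into a partition using the identity $\lambda^{(s)}_j = t^{(s)}_j - (n_s - j)$, where $t^{(s)}_1 > t^{(s)}_2 > \dots > t^{(s)}_{n_s}$ are the bead row indices on runner $s$ listed from the bottom (largest) to the top. For $s < n-r$ the bead rows are $\{0, \dots, c-1\} \cup \{a+c, \dots, a+b+c-1\}$, and plugging into the formula yields $\lambda^{(s)}_j = a$ for $1 \leq j \leq b$ and $\lambda^{(s)}_j = 0$ thereafter, so $\lambda^{(s)} = (a^b)$. For $s \geq n-r$ the top block contributes one additional bead and the same calculation gives $\lambda^{(s)} = (a^{b+1})$.

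The computation is essentially routine; the only point requiring care is tracking the case split at runner $n-r$, together with the degenerate case $r=0$, in which that split collapses and every runner correctly returns $(a^b)$. Arranging the resulting partitions in the order $s=0,1,\dots,n-1$ then gives exactly the $n$-tuple in the statement.
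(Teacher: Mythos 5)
Your proof is correct and follows the same abacus method as the paper, but with a different and arguably cleaner choice of the number of beads. The paper takes $m = nb+r$ (exactly the number of nonzero parts), so the bead positions form a single contiguous block $\{na,\dots,na+nb+r-1\}$ and the per-runner bead count is read off in one sentence; you instead take $m = n(b+c)$ for $c$ large, a multiple of $n$, so that every runner carries exactly $b+c$ positions. Your choice costs a little more bookkeeping (trailing zero parts create a second interval of bead positions, which you track through both the distribution step and the conversion $\lambda^{(s)}_j = t^{(s)}_j - (n_s - j)$), but it buys a genuine payoff: since $m \equiv 0 \pmod n$, the $n$-tuple comes out in precisely the order asserted in the lemma. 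With the paper's $m = nb+r$, which is not a multiple of $n$ when $r>0$, a direct residue count actually puts the $b+1$-bead runners at positions $0,\dots,r-1$ rather than at the last $r$ positions, so the tuple one reads off is a rearrangement of the one stated (harmless downstream, since the application in \cref{lem:nRootUnitySpec} and \cref{thm:rectSchur} depends only on the multiset of quotient constituents, but your version matches the statement literally). The only small slip is the remark that the upper interval begins at row $a+c-1$ in the $r=0$ case --- there it begins at row $a+c$ --- but as you observe the case split collapses and every runner still yields $(a^b)$, so the conclusion is unaffected.
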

\begin{proof}
The abacus representation of $\lambda = (na)^{nb+r}$ with $m = nb+r$ beads and $d = n$ runners is given via
\[
na + (nb+r) -i = s+nt,
\] 
for $i = 1, \dotsc, nb+r$ where $0 \leq s \leq n-1$, see \cref{fig:abacus2}.
Thus we see that each of the $n$ runners have no bead in the first $a$ rows. 
Since all parts of $\lambda$ are the same, we also note that the $nb+r$ beads 
are distributed evenly from right to left on the $n$ runners with no vacant positions 
in between the beads on each runner. Thus there are $b$ beads on the first $n-r$ runners 
and $b+1$ beads on the last $r$ runners. Moreover each bead have exactly $a$ vacant positions 
above it on its runner, so the $n$-quotient is given as in the lemma.   
\end{proof}

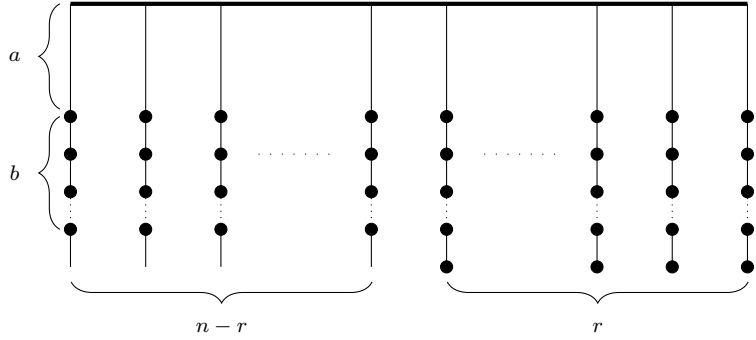
\begin{figure}
\begin{tikzpicture}
\draw [ultra thick] (0,0) --(9,0);
\draw [thin] (0,0) -- (0,-2.5);
\draw [dotted] (0,-2.5) -- (0,-3);
\draw [thin] (0,-3) -- (0,-3.5);

\draw [thin] (1,0) -- (1,-2.5);
\draw [dotted] (1,-2.5) -- (1,-3);
\draw [thin] (1,-3) -- (1,-3.5);

\draw [thin] (2,0) -- (2,-2.5);
\draw [dotted] (2,-2.5) -- (2,-3);
\draw [thin] (2,-3) -- (2,-3.5);

\draw[loosely dotted] (2.5,-2) -- (3.5,-2);

\draw [thin] (4,0) -- (4,-2.5);
\draw [dotted] (4,-2.5) -- (4,-3);
\draw [thin] (4,-3) -- (4,-3.5);

\draw [thin] (5,0) -- (5,-2.5);
\draw [dotted] (5,-2.5) -- (5,-3);
\draw [thin] (5,-3) -- (5,-3.5);

\draw[loosely dotted] (5.5,-2) -- (6.5,-2);

\draw [thin] (7,0) -- (7,-2.5);
\draw [dotted] (7,-2.5) -- (7,-3);
\draw [thin] (7,-3) -- (7,-3.5);

\draw [thin] (8,0) -- (8,-2.5);
\draw [dotted] (8,-2.5) -- (8,-3);
\draw [thin] (8,-3) -- (8,-3.5);

\draw [thin] (9,0) -- (9,-2.5);
\draw [dotted] (9,-2.5) -- (9,-3);
\draw [thin] (9,-3) -- (9,-3.5);

\draw[fill=black] (0,-1.5) circle (0.08cm);
\draw[fill=black] (0,-2) circle (0.08cm);
\draw[fill=black] (0,-2.5) circle (0.08cm);
\draw[fill=black] (0,-3) circle (0.08cm);
\draw[fill=black] (1,-1.5) circle (0.08cm);
\draw[fill=black] (1,-2) circle (0.08cm);
\draw[fill=black] (1,-2.5) circle (0.08cm);
\draw[fill=black] (1,-3) circle (0.08cm);
\draw[fill=black] (2,-1.5) circle (0.08cm);
\draw[fill=black] (2,-2) circle (0.08cm);
\draw[fill=black] (2,-2.5) circle (0.08cm);
\draw[fill=black] (2,-3) circle (0.08cm);
\draw[fill=black] (4,-1.5) circle (0.08cm);
\draw[fill=black] (4,-2) circle (0.08cm);
\draw[fill=black] (4,-2.5) circle (0.08cm);
\draw[fill=black] (4,-3) circle (0.08cm);
\draw[fill=black] (5,-1.5) circle (0.08cm);
\draw[fill=black] (5,-2) circle (0.08cm);
\draw[fill=black] (5,-2.5) circle (0.08cm);
\draw[fill=black] (5,-3) circle (0.08cm);
\draw[fill=black] (5,-3.5) circle (0.08cm);
\draw[fill=black] (7,-1.5) circle (0.08cm);
\draw[fill=black] (7,-2) circle (0.08cm);
\draw[fill=black] (7,-2.5) circle (0.08cm);
\draw[fill=black] (7,-3) circle (0.08cm);
\draw[fill=black] (7,-3.5) circle (0.08cm);
\draw[fill=black] (8,-1.5) circle (0.08cm);
\draw[fill=black] (8,-2) circle (0.08cm);
\draw[fill=black] (8,-2.5) circle (0.08cm);
\draw[fill=black] (8,-3) circle (0.08cm);
\draw[fill=black] (8,-3.5) circle (0.08cm);
\draw[fill=black] (9,-1.5) circle (0.08cm);
\draw[fill=black] (9,-2) circle (0.08cm);
\draw[fill=black] (9,-2.5) circle (0.08cm);
\draw[fill=black] (9,-3) circle (0.08cm);
\draw[fill=black] (9,-3.5) circle (0.08cm);
\draw[decorate,decoration={brace,amplitude=8pt},xshift=-4pt,yshift=0pt]
(0,-1.4) -- (0,0) node [black,midway,xshift=-0.6cm] 
{\footnotesize $a$};
\draw[decorate,decoration={brace,amplitude=8pt},xshift=-4pt,yshift=0pt]
(0,-3) -- (0,-1.5) node [black,midway,xshift=-0.6cm] 
{\footnotesize $b$};
\draw[decorate,decoration={brace,amplitude=8pt},xshift=-4pt,yshift=0pt]
(4.15,-3.7) -- (0.15,-3.7) node [black,midway,yshift=-0.6cm] 
{\footnotesize $n-r$};
\draw[decorate,decoration={brace,amplitude=8pt},xshift=-4pt,yshift=0pt]
(9.15,-3.7) -- (5.15,-3.7) node [black,midway,yshift=-0.6cm] 
{\footnotesize $r$};
\end{tikzpicture}
\caption{The abacus representation of $\lambda = (na)^{nb+r}$ with $m = nb+r$ beads and $d=n$ runners.} \label{fig:abacus2}
\end{figure}

\begin{lemma}\label{lem:rectSchurEval}
We have
 \[
\schurS_{(a^b)}(1^m) = \prod_{j=0}^{a-1} \binom{m+j}{b}\binom{b+j}{b}^{-1} 
 \]
\end{lemma}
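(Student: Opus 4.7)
The plan is to apply the hook-content formula stated just above the lemma (equation \eqref{hcontent}) directly to the rectangular shape $\lambda = (a^b)$ and simplify. For boxes $(i,j)$ with $1 \le i \le b$ and $1 \le j \le a$, the content is $c_{i,j} = j-i$ and the hook length is
\[
h_{i,j} = (a-j) + (b-i) + 1 = a+b+1-i-j,
\]
so $s_{(a^b)}(1^m) = \prod_{i,j} \frac{m+j-i}{a+b+1-i-j}$. The key observation is that both the numerator and denominator factor nicely when the double product is grouped by columns.

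First I would fix a column $j$ and evaluate
\[
\prod_{i=1}^b (m+j-i) = \frac{(m+j-1)!}{(m+j-1-b)!}, \qquad \prod_{i=1}^b (a+b+1-i-j) = \frac{(a+b-j)!}{(a-j)!},
\]
since each is a product of $b$ consecutive descending integers. Multiplying the column contributions over $j = 1, \ldots, a$ and shifting to $j' = j-1$ gives
\[
s_{(a^b)}(1^m) = \prod_{j'=0}^{a-1} \frac{(m+j')!\,(a-1-j')!}{(m+j'-b)!\,(a-1-j'+b)!}.
\]
Since the product of a sequence is invariant under reversing the index, I would reindex the factors involving $(a-1-j')$ by the substitution $k = a-1-j'$ (legitimate precisely because the two subproducts $\prod_{j'}\frac{(m+j')!}{(m+j'-b)!}$ and $\prod_{j'}\frac{(a-1-j')!}{(a-1-j'+b)!}$ are multiplicatively independent and can be reindexed separately). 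After relabeling back to a common index $j$, everything collapses into
\[
s_{(a^b)}(1^m) = \prod_{j=0}^{a-1} \frac{(m+j)!\,j!}{(m+j-b)!\,(b+j)!} = \prod_{j=0}^{a-1}\binom{m+j}{b}\binom{b+j}{b}^{-1},
\]
as desired.

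There is no real obstacle here: the proof is essentially a direct unpacking of the hook-content formula for the rectangle, and the only thing to watch is the index bookkeeping in the final reindexing step so that the factorials line up correctly with the binomial coefficients in the stated form.
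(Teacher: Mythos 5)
Your proof is correct and takes the same route as the paper: apply the hook-content formula to the rectangle, group the double product by columns, and collapse each column factor into a ratio of factorials (equivalently binomials). The only difference is presentational — the paper compresses the separate reindexing of the numerator and denominator products into the phrase ``after rearrangement,'' while you spell out that the two subproducts are multiplicatively independent and can be reindexed independently, which is exactly the point being elided.
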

\begin{proof}
By the hook-content formula \eqref{hcontent} we have
\begin{align*}
s_{(a^b)}(1^m) = \prod_{(i,j) \in (a^b)} \frac{m+j -i}{(a-j)+ (b-i)+1},
\end{align*} \noindent
which after rearrangement equals
\begin{align*}
\prod_{j=0}^{a-1} \prod_{i=0}^{b-1} \frac{m+j-i}{b+j-i} =
%\prod_{j=0}^{a-1} \frac{(m+j)^{\underline{b}}}{(b+j)^{\underline{b}}} =
\prod_{j=0}^{a-1} \frac{(m+j)!}{(m-b+j)!} \frac{j!}{(b+j)!} = \prod_{j=0}^{a-1} \binom{m+j}{b}\binom{b+j}{b}^{-1}.
\end{align*}
\end{proof} \noindent
\begin{theorem} \label{thm:rectSchur}
Let $n,m,a,b \in \mathbb{N}$ with $b < m$ and $n|b,m$. If $\lambda = (a^b)$, then the triple 
\[
(\SSYT(n\lambda,m), C_n, s_{n\lambda}(1,q,q^2,\dots, q^{m-1}) )
\] 
exhibits a CSP for some $C_n$ acting on $\SSYT(\lambda,m)$.
\end{theorem}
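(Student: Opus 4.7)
The plan is to apply \cref{thm:cspCond} to $f(q) = s_{n\lambda}(1,q,q^2,\dotsc,q^{m-1})$ with $n\lambda = (na)^b$. Since \cref{lem:nRootUnitySpec} already gives $f(\omega_n^j)\in\setN$ for all $j$, the remaining task is to verify the positivity condition
\[
S_k = \sum_{j\mid k}\mu(k/j)\,f(\omega_n^j)\geq 0 \qquad\text{for every } k\mid n.
\]

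First I would derive a closed form for $f(\omega_n^j)$. When $j\mid n$, the root $\omega_n^j$ is a primitive $d$-th root of unity with $d = n/j$, and $d$ divides both $m$ and $b$ (since $n\mid m$ and $n\mid b$). Applying \cref{lem:rectQuotient} with $d$ runners to $(na)^b = (d\cdot aj)^{d\cdot (b/d)}$ (remainder zero) identifies every component of the $d$-quotient as the rectangle $(aj)^{bj/n}$. Combined with \cref{lem:nRootUnitySpec} and \cref{lem:rectSchurEval} this yields
\[
f(\omega_n^j) = N_j^{n/j},\qquad N_j := \prod_{\ell=0}^{aj-1}\binom{mj/n+\ell}{bj/n}\binom{bj/n+\ell}{bj/n}^{-1}.
\]

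Next I would verify the positivity of $S_k$. The case $k=1$ reduces to $S_1 = N_1^n \geq 0$. For $k\mid n$ with $k>1$, I would attack the sum by establishing a prime-step inequality $N_{jp}\geq N_j^{p}$ for every prime $p$ with $jp\mid n$, then iterating along divisor chains to obtain
\[
N_k^{n/k} \geq (2^{\omega(k)}-1)\max_{j\mid k,\, j<k} N_j^{n/j},
\]
where $\omega(k)$ is the number of distinct prime factors of $k$. Since at most $2^{\omega(k)}-1$ proper divisors of $k$ contribute nonzero Möbius terms, each bounded in absolute value by the above maximum, this forces $S_k\geq 0$.

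The hard part will be the prime-step inequality $N_{jp}\geq N_j^{p}$. Combinatorially, $N_{jp}$ counts SSYT of shape $(ajp)^{bjp/n}$ with entries in $\{0,\dotsc,mjp/n-1\}$, while $N_j^p$ counts $p$-tuples of SSYT of shape $(aj)^{bj/n}$ with entries in $\{0,\dotsc,mj/n-1\}$; one can try to inject the product by placing the $p$ small tableaux in diagonal blocks, shifting the $i$-th block's entries by $(i-1)(mj/n)$, and filling off-diagonal boxes in a canonical monotone way. Algebraically, one reindexes the product defining $N_{jp}$ by $\ell = p\ell'+r$ with $0\leq r<p$ and $0\leq \ell'<aj$ and shows via elementary binomial estimates that each $r$-slice of the product exceeds $N_j$. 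I expect the algebraic route to be cleaner, since the combinatorial injection requires a delicate boundary filling to preserve column strictness along the diagonal, whereas the reindexed product telescopes naturally against the explicit formula for $N_j$.
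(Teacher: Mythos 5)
Your setup reproduces the paper's opening moves: reduce to the positivity condition $S_k\geq 0$ via \cref{thm:cspCond}, and identify $f(\omega_n^j)=N_j^{n/j}$ by combining \cref{lem:nRootUnitySpec}, \cref{lem:rectQuotient}, and \cref{lem:rectSchurEval}. The positivity argument is where you diverge from the paper (which bounds the number of proper divisors by $2\sqrt{k}-1$, splits the product for $N_k^{n/k}$ at $i=\lfloor k/2\rfloor a$, and invokes \cref{lem:binomineq} and \cref{lem:prodBinomLwrBnd}), and it is also where the gap lies.

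The prime-step inequality $N_{jp}\geq N_j^p$, raised to the power $n/(jp)$, gives $N_{jp}^{n/(jp)}\geq N_j^{n/j}$; iterating along a divisor chain from $j$ to $k$ yields only the monotonicity $N_k^{n/k}\geq N_j^{n/j}$, with no multiplicative gain. Your claimed intermediate bound
\[
N_k^{n/k}\geq \bigl(2^{\omega(k)}-1\bigr)\max_{\substack{j\mid k\\ j<k}}N_j^{n/j}
\]
therefore does not follow from the prime-step inequality as stated: monotonicity gives $N_k^{n/k}\geq N_{k/p}^{n/(k/p)}$ for the smallest prime $p\mid k$, but no factor of $2^{\omega(k)}-1$. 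Extracting that factor would require a quantitative strengthening, say $N_{jp}\geq C\cdot N_j^{p}$ with a constant $C>1$ that compounds sufficiently along the chain from $k/p$ up to $k$; such a strengthening is neither stated nor obviously true. In addition, the prime-step inequality itself is left at the level of two sketched attacks, with the column-strictness of the diagonal-block injection and the factorwise comparison of $r$-slices both acknowledged but unresolved, so even the baseline monotonicity is not yet secured. The idea of replacing the paper's crude divisor count $2\sqrt{k}-1$ by the M\"obius-aware count $2^{\omega(k)}-1$ is attractive and could plausibly give a cleaner proof than the appendix estimates, but as written the deduction has a hole that must be filled before the argument closes.
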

\begin{proof}
By \cref{lem:nRootUnitySpec} it follows that $\schurS_{n\lambda}(1,\omega_n^j,\omega_n^{2j},\dotsc,\omega_n^{(m-1)j}) \in \mathbb{N}$ for all $j = 1, \dots, n$. By \cref{thm:cspCond} it therefore remains to show that for all $k|n$,
\begin{align}\label{eq:mainRectSchurIneq}
\sum_{j|k} \mu(k/j) \schurS_{n\lambda}(1,\omega_n^j,\omega_n^{2j},\dotsc,\omega_n^{(m-1)j}) \geq 0.
\end{align} \noindent
Note that $\omega_n^j$ is a $(n/j)^{th}$ root of unity.
By \cref{lem:nRootUnitySpec} and \cref{lem:rectQuotient} the left hand side of \eqref{eq:mainRectSchurIneq} rewrites as
\begin{align}
\sum_{j|k} \mu(k/j) \prod_{i=0}^{n/j-1} \underbrace{ \schurS_{(ja)^{bj/n}}(1^{mj/n}) }_{\text{independent of $i$}} &=  
\sum_{j|k} \mu(k/j) \left( \schurS_{(ja)^{bj/n}}(1^{mj/n}) \right)^{n/j}.
\end{align}
Using \cref{lem:rectSchurEval}, this equals
\begin{align}
 \sum_{j|k} \mu(k/j) \left( \prod_{i=0}^{ja-1} \binom{mj/n+i}{bj/n}\binom{bj/n+i}{bj/n}^{-1}   \right)^{n/j},
\end{align} \noindent
which is greater or equal to
\begin{align} \label{eq:firstMobiusEstimate}
\left(   \prod_{i=0}^{ka-1} \binom{mk/n+i}{bk/n}\binom{bk/n+i}{bk/n}^{-1}   \right)^{\frac{n}{k}} \!\!\!\!\!\! - \sum_{\substack{j|k \\ j < k}} \left(   \prod_{i=0}^{ja-1} \binom{mj/n+i}{bj/n}\binom{bj/n+i}{bj/n}^{-1}   \right)^{\frac{n}{j}}.
\end{align} \noindent 
By \cref{lem:binomineq} and the fact that the number of divisors of $k$, excluding $k$,
is bounded above by $2\sqrt{k}-1$ we get that \eqref{eq:firstMobiusEstimate} is greater than or equal to
\begin{align} \label{eq:finalMobiusEstimate}
\left( \prod_{i=k'a}^{ka-1} \frac{\binom{mk/n+i}{bk/n}^{n/k}}{\binom{bk/n+i}{bk/n}^{n/k}} - (2\sqrt{k}-1) \right ) \left(\prod_{i=0}^{k'a-1}  \frac{\binom{mk/n+i}{bk/n}^{n/k}}{\binom{bk/n+i}{bk/n}^{n/k}} - \prod_{i=0}^{k'a-1} \frac{\binom{mk'/n+i}{bk'/n}^{n/k'}}{\binom{bk'/n+i}{bk'/n}^{n/k'}}  \right ),
\end{align} \noindent
where $k' = \lfloor k/2\rfloor$.
The remaining steps needed are given in the appendix \cref{sec:appendix},
where it is shown that the left factor in \eqref{eq:finalMobiusEstimate} is non-negative by \cref{lem:prodBinomLwrBnd} and the right factor is non-negative by \cref{lem:binomineq} for all $k|n$.
This concludes the proof of the theorem.
\end{proof}

\section{The CSP cone} \label{sec:cspCone}
In the following sections we offer a geometric perspective on the cyclic sieving phenomenon by associating a polyhedral cone that captures joint information about the cyclic action and statistics on the object $X$. The cone has the property that all cyclic sieving phenomena with a polynomial generated by a choice of statistic (modulo $n$) on the set $X$ corresponds to a lattice point in the cone.

As presented in the introduction, the polynomial $f(q)$ is often given by some natural statistic $\stat : X \to \mathbb{N}$ on $X$. Define
\[
f_{\stat}(q) \coloneqq \sum_{x \in X} q^{\stat(x)}.
\]
Moreover for each $n \in \mathbb{N}$, define $\stat_n : X \to \mathbb{Z}_n$ by 
\[
\stat_n(x) \coloneqq \stat(x) \Mod{n}.
\]
More than understanding the individual components of the CSP triple $(X,C_n,f_{\tau}(q))$, 
one is also interested in the behaviour and distribution of the statistic $\tau$ with respect to the cyclic action.
Given an action of $C_n$ on $X$ and a statistic $\stat:X \to \mathbb{N}$, 
we can associate a $n \times n$ matrix $A_{(X,C_n,\stat)} = (a_{ij})$ 
which keeps track of the coefficients of the generating function
\begin{align*}
\sum_{x \in X} q^{\stat_n(x)}t^{o(x)} \coloneqq \sum_{i=0}^{n-1} \sum_{j=1}^{n} a_{ij}q^{i}t^{j},
\end{align*}
where $o(x) \coloneqq \min\{ j \in [n] : \sigma_n^j \cdot x = x \}$ denotes the order
of $x\in X$ under $C_n$. We remark that the rows of $A_{(X,C_n,\stat)}$ are indexed from $0$ to $n-1$.

We can now restate CSP as follows:
\begin{proposition}\label{prop:integerCSPMatrix}
Suppose $X$ is a finite set on which $C_n$ acts and let $\stat:X\to\mathbb{N}$ be a statistic. Then
the triple $(X,C_n,f_{\stat}(q))$ exhibits CSP if and only if $A_{(X,C_n,\stat)} = (a_{ij})$ 
satisfies the condition that for each $1 \leq k \leq n$,
\begin{equation} \label{eq:CSPCond}
\sum_{\substack{0 \leq i < n \\ 1\leq j \leq n}} a_{ij} \nthroot_n^{ki} = \sum_{0\leq i < n} \sum_{j|k} a_{ij}.
\end{equation}
where $\nthroot_n$ is a primitive $n$th root of unity.
\end{proposition}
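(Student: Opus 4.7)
The plan is to unpack both sides of the CSP condition $f_{\stat}(\omega_n^k) = |X^{\sigma_n^k}|$ in terms of the entries $a_{ij}$ and observe that the resulting identity is exactly \eqref{eq:CSPCond}. Everything reduces to interpreting what $a_{ij}$ counts: by definition $a_{ij}$ is the number of $x \in X$ with $\stat_n(x) = i$ and $o(x) = j$.

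First I would handle the left-hand side. Evaluating $f_{\stat}(q) = \sum_{x \in X} q^{\stat(x)}$ at $q = \omega_n^k$ and using $\omega_n^{kn} = 1$ to replace $\stat(x)$ by $\stat_n(x)$, we get
\[
f_{\stat}(\omega_n^k) = \sum_{x \in X} \omega_n^{k\stat_n(x)} = \sum_{i=0}^{n-1}\sum_{j=1}^{n} a_{ij}\, \omega_n^{ki},
\]
which is precisely the left-hand side of \eqref{eq:CSPCond}.

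Next I would handle the right-hand side. By definition of the order $o(x)$, an element $x \in X$ is fixed by $\sigma_n^k$ if and only if $o(x) \mid k$. Partitioning $X^{\sigma_n^k}$ according to the pair $(\stat_n(x), o(x))$ gives
\[
|X^{\sigma_n^k}| = \sum_{j \mid k}\, \sum_{i=0}^{n-1} a_{ij} = \sum_{0 \leq i < n}\, \sum_{j \mid k} a_{ij},
\]
which matches the right-hand side of \eqref{eq:CSPCond}. Combining the two computations, \eqref{eq:CSPCond} holds for every $1 \leq k \leq n$ if and only if $f_{\stat}(\omega_n^k) = |X^{\sigma_n^k}|$ for every such $k$; since this is precisely the CSP condition \eqref{eq:origCSPCond} (noting that it suffices to check $k = 1, \dots, n$ by periodicity), the equivalence follows.

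There is no real obstacle here: the statement is a translation of the definition of CSP into matrix language, and the only subtle point is the harmless reduction of $\stat$ modulo $n$ in the exponent of $\omega_n$, which is justified by $\omega_n^n = 1$.
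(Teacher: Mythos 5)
Your proof is correct and follows essentially the same route as the paper: both sides of the defining CSP equality $f_{\stat}(\omega_n^k) = |X^{\sigma_n^k}|$ are unpacked by partitioning $X$ according to the pair $(\stat_n(x), o(x))$, which is exactly what the matrix entries $a_{ij}$ record. The only difference is that you spell out the harmless replacement of $\stat$ by $\stat_n$ in the exponent, which the paper passes over silently.
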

\begin{proof}
For each $1 \leq k \leq n$ we have that
\begin{align*}
X^{\sigma_n^k} &= \bigcup_{i=0}^{n-1} \{ x \in X : \stat_n(x) = i, \thickspace \sigma_n^k \cdot x = x \} \\
               &= \bigcup_{i=0}^{n-1} \bigcup_{j|k} \{ x \in X : \stat_n(x) = i, \thickspace o(x) = j \}.
\end{align*}
Hence $(X,C_n,f_{\stat}(q))$ exhibits CSP if and only if for each $1 \leq k \leq n$,
\begin{align}
\sum_{\substack{0\leq i < n \\ 1\leq j \leq n}} a_{ij} \nthroot_n^{ki} = f_{\stat_n}(\nthroot_n^k) =  |X^{\sigma_n^k}| = \sum_{0\leq i < n} \sum_{j|k} a_{ij}.
\end{align} 
\end{proof}
\noindent
This motivates the following definition.
\begin{definition}
A $n\times n$-matrix $A = (a_{ij}) \in \setR_{\geq 0}^{n \times n}$ is called a \emph{CSP-matrix} if it fulfills 
the conditions in \cref{eq:CSPCond}. 
Let $\CSP(n)$ denote the set of all $n \times n$ $\CSP$-matrices 
and $\CSP_{\mathbb{Z}}(n) \coloneqq \CSP(n) \cap \mathbb{Z}^{n \times n}$ the set of integer CSP-matrices.
\end{definition} \noindent
\begin{example}
Consider all binary words of length $6$, with group action being shift by $1$
and $\stat$ being the the major index statistic.
Then
% n = 6;
% words = Tuples[{0, 1}, n];
% mat = CSPMatrix[words, MajorIndex, RotateRight[#, 1] &, n];
% mat // MatrixForm
% FullSimplify@CheckCSPMatrix[mat]
\[
\begin{pmatrix}
 2 & 1 & 0 & 0 & 0 & 11 \\
 0 & 0 & 2 & 0 & 0 & 7 \\
 0 & 0 & 0 & 0 & 0 & 11 \\
 0 & 1 & 2 & 0 & 0 & 7 \\
 0 & 0 & 0 & 0 & 0 & 11 \\
 0 & 0 & 2 & 0 & 0 & 7 \\
\end{pmatrix} 
\]
is the corresponding CSP matrix. The entry in the upper left hand corner correspond
to the two binary words $000000$ and $111111$.
These have major index $0$ and are fixed under a single shift. The words corresponding to the second column are $010101$ and $101010$.  These have major index $6 \equiv 0 \Mod{6}$ and $9 \equiv 3 \Mod{6}$ respectively and are fixed under two consecutive shifts etc.
\end{example}
By linearity of the CSP-condition \eqref{eq:CSPCond},
it follows that for all $A,B \in \CSP(n)$ we have $sA+tB \in \CSP(n)$ for any $s,t\geq 0$. Hence $\CSP(n)$ forms a real convex cone. In fact by Theorem \ref{thm:nhypdesc} in Section \ref{sec:geomCSP} we have the following corollary.
\begin{corollary} \label{cor:convRatPolyhedral}
The set $\CSP(n)$ forms a real convex rational polyhedral cone.
\end{corollary}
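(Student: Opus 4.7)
The plan is to realize $\CSP(n)$ as the intersection of the nonnegative orthant $\mathbb{R}_{\geq 0}^{n\times n}$ with a rational linear subspace of $\mathbb{R}^{n\times n}$, which is precisely what it means to be a rational polyhedral cone. Convexity, closure under nonnegative scaling, and membership of $0$ have already been observed in the sentence immediately preceding the corollary and follow from the linearity of~\eqref{eq:CSPCond}, so the only real content is the rationality of the half-space description.

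The most efficient route is to invoke the forthcoming \cref{thm:nhypdesc}, which (as its name suggests) furnishes an explicit half-space description of $\CSP(n)$ whose defining equations and inequalities have rational coefficients; the corollary then follows in a single line. Since \cref{thm:nhypdesc} is stated later in the same section and does not itself depend on the corollary, this forward reference is legitimate.

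For a self-contained argument avoiding the forward reference, I would apply the M\"obius combination $\sum_{j\mid k}\mu(k/j)(\cdot)$ to the $k$th equation of~\eqref{eq:CSPCond} for each $k\in[n]$, mimicking the manipulation in~\cref{lem:mobiusDiv}. By M\"obius inversion on the divisor poset, the right-hand side collapses to the rational functional $\sum_i a_{ik}$ (when $k\mid n$) or $0$ (when $k\nmid n$), while the left-hand side rewrites as $\sum_i c_i\,R_{k,i}$, where $c_i=\sum_j a_{ij}$ is a row sum and $R_{k,i}=\sum_{j\mid k}\mu(k/j)\omega_n^{ij}$ is a classical Ramanujan sum, hence a rational integer. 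Since the M\"obius transform is invertible over the divisor lattice, the new system is equivalent to the original, and combining it with the rational inequalities $a_{ij}\geq 0$ displays $\CSP(n)$ as a rational polyhedral cone. The only bookkeeping step — and the mild obstacle of the argument — is to confirm that no information is lost in passing from the complex system~\eqref{eq:CSPCond} to the rational one; this is fine, because for real $A$ the equations indexed by $k$ and $n-k$ are complex conjugates of one another, so nothing is duplicated or dropped.
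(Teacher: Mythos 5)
Your primary route --- invoking \cref{thm:nhypdesc}, whose parametric description has rational (indeed integer) coefficients $\alpha_{ijk}$ --- is precisely what the paper does, and as you note the forward reference is harmless because \cref{thm:nhypdesc} does not rely on the corollary.

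Your proposed ``self-contained'' alternative, however, contains a genuine error. The quantity $R_{k,i}=\sum_{j\mid k}\mu(k/j)\,\omega_n^{ij}$ is \emph{not} a Ramanujan sum (a Ramanujan sum ranges over residues coprime to a modulus, not over divisors of $k$), and it need not be a rational integer. For instance, take $n=4$, $k=2$, $i=1$: then $R_{2,1}=\mu(2)\omega_4+\mu(1)\omega_4^{2}=-\,\mathrm{i}-1$, which is not real, let alone rational. So the M\"obius transform of \cref{eq:CSPCond} does \emph{not} directly produce a system with rational coefficients, and your final ``bookkeeping step'' does not rescue it: splitting the transformed equations into real and imaginary parts replaces the $\omega_n^{ij}$ by $\cos(2\pi ij/n)$ and $\sin(2\pi ij/n)$, which are also irrational for general $n$. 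The rationality of $\CSP(n)$ is in fact the nontrivial content here; it emerges in the proof of \cref{thm:nhypdesc} only after computing the inverse $(V(\nthrootvec)-J_1)^{-1}=\tfrac{1}{n}V(\overline{\nthrootvec})$ and observing that the resulting geometric sums $\sum_{\ell}\omega_n^{(i-k)\ell}$ and $\sum_{s}(\omega_n^{c_jk})^{s}$ collapse to integers. If you want a self-contained argument, you would need to reproduce (or abstract) that null-space computation; the M\"obius combination alone is insufficient.
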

\section{General properties of the CSP cone} \label{sec:generalProps}
Since $\CSP(n)$ is a rational cone by \cref{cor:convRatPolyhedral}, its extreme rays are spanned by integer matrices. Every element in $\CSP(n)$ is therefore a conic combination of elements in $\CSP_{\mathbb{Z}}(n)$. In particular, properties of $\CSP_{\mathbb{Z}}(n)$ closed under conic combinations can be lifted to $\CSP(n)$.

A priori an integer lattice point $A \in \CSP_{\mathbb{Z}}(n)$ need not be realizable by a cyclic sieving phenomenon with CSP-matrix $A$. However thanks to Lemma \ref{lem:aprioriprops} we shall see that this property does indeed hold.

\begin{lemma} \label{lem:aprioriprops}
Let $A = (a_{ij})\in \CSP_{\mathbb{Z}}(n)$. Then there exists a CSP-triple $(X,C_n,\tau)$ with $A_{(X,C_n,\tau)} = A$. 
\end{lemma}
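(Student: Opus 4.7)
The plan is to build the CSP triple directly from the matrix $A$, in the spirit of \cref{con:adhoc}, with extra bookkeeping to match the prescribed statistic. First I would observe that the CSP condition forces the columns of $A$ indexed by $j \nmid n$ to vanish: substituting $k = n$ into \eqref{eq:CSPCond} gives $\sum_{i,j} a_{ij} = \sum_i \sum_{j \mid n} a_{ij}$, and since $a_{ij} \geq 0$ this forces $a_{ij} = 0$ whenever $j \nmid n$. Let $T_k \coloneqq \sum_i a_{ik}$ denote the $k$th column sum, so $T_k = 0$ for $k \nmid n$.

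The key step is to show that $k \mid T_k$ for every $k \mid n$, since this is exactly what one needs to partition the elements of column $k$ into $T_k / k$ orbits of size $k$. To verify this, I introduce the auxiliary polynomial $f(q) = \sum_{i=0}^{n-1} c_i q^i$ with $c_i = \sum_j a_{ij}$; this will be $f_\tau(q)$ once the set is constructed. A direct rewrite of \eqref{eq:CSPCond} reads
\[
f(\nthroot_n^\ell) \;=\; \sum_{i,j} a_{ij} \nthroot_n^{\ell i} \;=\; \sum_{j \mid \ell} T_j
\]
for every $\ell$, and in particular $f(\nthroot_n^\ell) \in \setZ$ for all $\ell$. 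Möbius inversion on the divisor poset of $k \mid n$ then gives $T_k = \sum_{j \mid k} \mu(k/j) f(\nthroot_n^j)$, and \cref{lem:mobiusDiv} immediately yields $k \mid T_k$.

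With the divisibility in hand, I take $X$ to be any set of cardinality $\sum_{i,j} a_{ij}$ partitioned into blocks $X_{i,k}$ with $|X_{i,k}| = a_{ik}$ for each pair $(i,k)$ with $k \mid n$, and define $\tau(x) \coloneqq i$ for $x \in X_{i,k}$. For each fixed $k \mid n$, I group the $T_k$ elements of $\bigsqcup_i X_{i,k}$ into $T_k/k$ blocks of size $k$ (possible by the previous step) and let $C_n$ act via \cref{con:adhoc}, cycling through each block. Then every element of $X_{i,k}$ has order exactly $k$ and statistic congruent to $i \pmod{n}$, so $A_{(X,C_n,\tau)} = A$ by construction. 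The CSP property is automatic: for each $k$,
\[
f_\tau(\nthroot_n^k) \;=\; \sum_{i,j} a_{ij} \nthroot_n^{ki} \;=\; \sum_i \sum_{j \mid k} a_{ij} \;=\; \sum_{j \mid k} T_j \;=\; |X^{\sigma_n^k}|,
\]
by \eqref{eq:CSPCond} and the orbit structure just built.

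The only real obstacle is showing that each column sum $T_k$ is divisible by $k$; once one recognises that the CSP condition furnishes precisely the integrality hypothesis required by \cref{lem:mobiusDiv}, the rest of the argument is essentially a relabeling of the proof of \cref{thm:cspCond} that also remembers the statistic distribution within each order class.
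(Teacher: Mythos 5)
Your proof is correct and follows essentially the same route as the paper's: both define the auxiliary polynomial from the row sums, rewrite the CSP condition to express $f(\omega_n^\ell)$ as a partial sum of column sums, apply Möbius inversion and \cref{lem:mobiusDiv} to obtain the divisibility $k \mid T_k$, and then realize the triple via the ad-hoc construction with the statistic assigned according to the entries of $A$. Your write-up is a bit more explicit about the block decomposition $X_{i,k}$ and the vanishing of columns indexed by $j\nmid n$, but these are minor unpackings of the same argument rather than a different approach.
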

\begin{proof}
According to (\ref{eq:CSPCond}), the polynomial $f(q) = \sum_{i=0}^{n-1} r_i q^{i}$ where $r_i = \sum_{j =1}^n a_{ij}$ for $i = 0,\dots, n-1$ defines a polynomial such that
$f(\omega_n^k) = \sum_{j|k} S_j \in \mathbb{N}$ for $k=1,\dots, n$,
where $S_j = \sum_{i=0}^{n-1} a_{ij}$ for $j = 1, \dots, n$.
By Möbius inversion as in Theorem \ref{thm:cspCond} we have
$S_k = \sum_{j|k} \mu(k/j) f(\omega_n^j)$.
Hence by Lemma \ref{lem:mobiusDiv}, $k|S_k$.
Therefore a CSP-instance having CSP-matrix $A$ can be realized through any triple $(X,C_n,\tau)$ with $C_n$ acting  in an ad-hoc manner on a set $X$ with $\sum_{i,j} a_{ij}$ elements divided into $S_k/k$ orbits of size $k$ for each $k|n$ where $\tau:X \to \mathbb{N}$ is any statistic distributed according to $A$.  
\end{proof} \noindent

Let $\{ E_{ij} : 0 \leq i < n, 1 \leq j \leq n \}$ denote the standard basis of $\mathbb{R}^{n \times n}$. 
\begin{definition}
Call a matrix $\delta_a(\mathbf{u}, \mathbf{v}) \in \mathbb{R}^{n \times n}$ a \emph{swap} if
\[
\delta_a(\mathbf{u}, \mathbf{v}) \coloneqq a(E_{u_1u_2} + E_{v_1v_2} - E_{v_1u_2} - E_{u_1v_2}),
\]
where $a \in \mathbb{R}$.
\end{definition}\noindent
\begin{lemma} \label{lem:dispInvariance}
Let $A \in \CSP(n)$ and suppose $\delta_a(\mathbf{u}, \mathbf{v}) + A \in \mathbb{R}_{\geq 0}^{n \times n}$. Then $\delta_a(\mathbf{u}, \mathbf{v}) + A \in \CSP(n)$.
\end{lemma}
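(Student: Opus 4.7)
The key observation is that condition \eqref{eq:CSPCond} is \emph{linear} in the matrix entries, so the set of real $n\times n$ matrices satisfying it forms a linear subspace $L \subseteq \mathbb{R}^{n \times n}$. Writing $A' = A + \delta_a(\mathbf{u},\mathbf{v})$, non-negativity of $A'$ is given by hypothesis, so what must be verified is that $A' \in L$. Since $A \in L$ and $L$ is a linear subspace, it suffices to prove $\delta_a(\mathbf{u},\mathbf{v}) \in L$, i.e.\ that the swap matrix satisfies the homogeneous CSP equations.

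The plan is therefore just to evaluate both sides of \eqref{eq:CSPCond} at the matrix $\delta_a(\mathbf{u},\mathbf{v})$ for each fixed $k \in \{1,\dots,n\}$. For the left-hand side, the only nonzero entries sit at the four positions $(u_1,u_2), (v_1,v_2), (v_1,u_2), (u_1,v_2)$ with signs $+,+,-,-$, so
\[
\sum_{i,j} (\delta_a)_{ij} \omega_n^{ki} = a\bigl(\omega_n^{ku_1} + \omega_n^{kv_1} - \omega_n^{kv_1} - \omega_n^{ku_1}\bigr) = 0,
\]
because the row indices of the $+$ entries coincide with those of the $-$ entries. For the right-hand side, one splits by cases according to whether $u_2$ and $v_2$ divide $k$; in every case the $+a$ contribution from each row is immediately cancelled by the $-a$ contribution in the same row (since in each row, the two nonzero entries occur in columns $u_2$ and $v_2$ with opposite signs, so either both columns are summed or neither is). In all cases the total is $0$.

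Hence $\delta_a(\mathbf{u},\mathbf{v})$ satisfies \eqref{eq:CSPCond} with both sides equal to $0$, so $\delta_a(\mathbf{u},\mathbf{v}) \in L$, and adding $A \in L$ keeps us in $L$. Combined with $A + \delta_a(\mathbf{u},\mathbf{v}) \in \mathbb{R}_{\geq 0}^{n\times n}$, this yields $A + \delta_a(\mathbf{u},\mathbf{v}) \in \CSP(n)$. There is no genuine obstacle here; the only thing to be careful about is the case analysis on the right-hand side, since the four corners of the swap involve two distinct column indices $u_2, v_2$ and the divisibility pattern $j \mid k$ treats them asymmetrically---but in each row, the pairing of $+a$ and $-a$ in columns $\{u_2,v_2\}$ makes the contribution vanish regardless of which of $u_2,v_2$ divide $k$.
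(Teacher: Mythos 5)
Your overall strategy matches the paper's: condition \eqref{eq:CSPCond} is linear, so it suffices to show that the swap $\delta_a(\mathbf{u},\mathbf{v})$ lies in the kernel of the defining system, and then non-negativity of $A+\delta_a(\mathbf{u},\mathbf{v})$ is supplied by hypothesis. Your evaluation of the left-hand side is correct. However, your reasoning for why the right-hand side vanishes is flawed: you assert that the cancellation happens \emph{within each row} (``in each row, the pairing of $+a$ and $-a$ in columns $\{u_2,v_2\}$ makes the contribution vanish regardless of which of $u_2,v_2$ divide $k$''), and that ``either both columns are summed or neither is.'' This is false when exactly one of $u_2,v_2$ divides $k$. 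For example, if $u_2 \mid k$ but $v_2 \nmid k$, then row $u_1$ contributes $+a$ and row $v_1$ contributes $-a$; each row contributes a nonzero amount, and the cancellation occurs \emph{across} the two rows $u_1$ and $v_1$, not inside either one. The conclusion (RHS $=0$) is still correct, so this is a misstatement rather than a fatal gap, but the argument as written does not establish it. The paper's proof is cleaner and avoids the case analysis entirely: the left-hand side of \eqref{eq:CSPCond} is a function of row sums only, the right-hand side is a function of column sums only, and a swap changes neither the row sums nor the column sums.
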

\begin{proof}
Since adding $\delta_a(\mathbf{u}, \mathbf{v})$ does not alter column nor row-sums we have that the CSP-condition (\ref{eq:CSPCond}) remains intact. Hence $\delta_a(\mathbf{u}, \mathbf{v}) + A \in \CSP(n)$. 
\end{proof} \noindent
The next lemma follows by repeated applications of \cref{lem:dispInvariance}.
\begin{lemma} \label{lem:permAct}
Let $A = (a_{ij}) \in \CSP(n)$. Suppose $i$ and $i'$ are two row indices such that $\sum_{j=1}^n a_{ij} = \sum_{j=1}^n a_{i'j}$. If $A'$ is the matrix obtained from $A$ by interchanging rows $i$ and $i'$, then $A' \in \CSP(n)$. 
\end{lemma}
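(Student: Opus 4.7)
The plan is to realize the row swap as a composition of elementary swaps of the form $\delta_a(\mathbf{u},\mathbf{v})$, each of which preserves membership in $\CSP(n)$ by \cref{lem:dispInvariance}, and each of which keeps all entries non-negative. Writing $d_j \coloneqq a_{i'j} - a_{ij}$ for $j = 1, \dots, n$, the equal row sum hypothesis says $\sum_{j=1}^n d_j = 0$. Performing the row swap amounts precisely to adding $d_j$ to entry $(i,j)$ and $-d_j$ to entry $(i',j)$ for every $j$.

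The key observation is that a swap $\delta_a\bigl((i,j_+),(i',j_-)\bigr)$ with $a > 0$, $j_+ \in J_+ \coloneqq \{j : d_j > 0\}$ and $j_- \in J_- \coloneqq \{j : d_j < 0\}$ simultaneously moves us toward the target by increasing $a_{ij_+}$ and $a_{i'j_-}$ and decreasing $a_{i'j_+}$ and $a_{ij_-}$, all by $a$. I would proceed greedily: pick any $j_+ \in J_+$ and $j_- \in J_-$, set $a \coloneqq \min(d_{j_+}, -d_{j_-})$, apply the swap, and update $d_{j_+} \mapsto d_{j_+} - a$ and $d_{j_-} \mapsto d_{j_-} + a$. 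At least one of these becomes zero, so after at most $|J_+| + |J_-| - 1 \leq n-1$ iterations all $d_j$'s vanish.

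The two things to check at each step are (i) non-negativity is preserved and (ii) the cumulative effect produces the row swap. For (i), the definition of $a$ and a telescoping argument show that the total amount subtracted from $a_{i'j_+}$ over the entire process equals the initial $d_{j_+} = a_{i'j_+} - a_{ij_+} \leq a_{i'j_+}$, and likewise the total subtracted from $a_{ij_-}$ is at most the initial $a_{ij_-}$; hence all intermediate matrices lie in $\setR_{\geq 0}^{n\times n}$. Then \cref{lem:dispInvariance} guarantees each intermediate matrix is in $\CSP(n)$. For (ii), per-column bookkeeping confirms that at termination the net change at $(i,j)$ is $d_j$ and at $(i',j)$ is $-d_j$, yielding $A'$.

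I do not anticipate a substantial obstacle: the whole argument is essentially a transportation-style rearrangement enabled by the fact that $\CSP(n)$ is invariant under the row- and column-sum preserving operations $\delta_a(\mathbf{u},\mathbf{v})$. The only mildly delicate point is the non-negativity bookkeeping described above, which is routine once one notes that the greedy step only draws from entries whose accumulated debit never exceeds the initial value.
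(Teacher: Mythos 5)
Your proof is correct and follows the same route as the paper: realizing the row interchange as a sequence of elementary swaps $\delta_a(\mathbf{u},\mathbf{v})$ and invoking \cref{lem:dispInvariance} at each step. The paper simply states that the lemma ``follows by repeated applications'' of \cref{lem:dispInvariance}; you supply the explicit greedy decomposition and the non-negativity bookkeeping that justify this assertion.
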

\begin{remark}
The corresponding statement of Lemma \ref{lem:permAct} also holds for the column indices instead of row indices.
\end{remark}
\begin{proposition}
Let $n \in \mathbb{N}$ and suppose $i$ and $i'$ are row indices such that $\gcd(n,i) = \gcd(n,i')$. If $A \in \CSP(n)$, then $A' \in \CSP(n)$ where $A'$ is obtained from $A$ by interchanging rows $i$ and $i'$.
\end{proposition}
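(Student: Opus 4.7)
The plan is to reduce the claim to \cref{lem:permAct} by proving that, for any $A \in \CSP(n)$, the row sum $R_i \coloneqq \sum_{j=1}^{n} a_{ij}$ depends only on $\gcd(n,i)$. I would first invoke the $k=n$ case of \eqref{eq:CSPCond}: the left-hand side collapses to $\sum_{i,j} a_{ij}$ since $\omega_n^{ni} = 1$, while the right-hand side is $\sum_{i}\sum_{j\mid n,\, 1\le j\le n} a_{ij}$. Subtracting and using nonnegativity of the entries yields $C_j \coloneqq \sum_{i} a_{ij} = 0$ for every $j \nmid n$.

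With this in hand, for each $k \in \{1,\dots,n\}$ the right-hand side of \eqref{eq:CSPCond} equals $\sum_{j \mid \gcd(n,k)} C_j$, which depends only on $\gcd(n,k)$. Writing $f(q) \coloneqq \sum_{i=0}^{n-1} R_i q^i$, this says $f(\omega_n^k) = f(\omega_n^{k'})$ whenever $\gcd(n,k) = \gcd(n,k')$. In particular, for any unit $a \in (\mathbb{Z}/n\mathbb{Z})^{*}$ one has $\gcd(n,ak) = \gcd(n,k)$, so $f(\omega_n^k) = f(\omega_n^{ak})$ for all $k = 1,\dots,n$. Expanding both sides and performing the bijective index substitution $i \mapsto a^{-1}i$ on $\mathbb{Z}/n\mathbb{Z}$, the sequences $(R_i)_{i}$ and $(R_{a^{-1}i})_{i}$ have identical discrete Fourier transforms; since the DFT is an isomorphism of $\mathbb{C}^n$, they agree coordinate by coordinate, giving $R_{ai} = R_i$ for every $a \in (\mathbb{Z}/n\mathbb{Z})^{*}$ and every $i$.

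The orbits of the multiplicative action of $(\mathbb{Z}/n\mathbb{Z})^{*}$ on $\mathbb{Z}/n\mathbb{Z}$ are precisely the sets $\{i : \gcd(n,i) = d\}$ for $d \mid n$, so $R_i$ is constant on each such orbit. The hypothesis $\gcd(n,i) = \gcd(n,i')$ therefore gives $R_i = R_{i'}$, and \cref{lem:permAct} then yields $A' \in \CSP(n)$.

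The main obstacle is the Galois-type symmetry step in the second paragraph: upgrading the invariance \emph{``$f(\omega_n^k) = f(\omega_n^{ak})$ whenever $\gcd(a,n)=1$''} to the coefficient-level identity $R_{ai} = R_i$. The observation that makes this work is that multiplication by a unit $a$ is a bijection on $\mathbb{Z}/n\mathbb{Z}$, so the coincidence of DFT values can be reindexed and then read off as an equality of the underlying sequences by invertibility of the DFT.
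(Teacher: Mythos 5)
Your proof is correct, but it takes a genuinely different route from the paper's. The paper first restricts to integer CSP matrices $A \in \CSP_{\mathbb{Z}}(n)$, uses the $\mathbb{Z}$-module characterization of polynomials that take integer values at $n$th roots of unity (via Désarménien's \cref{prop:Desarmien} and \cref{lem:modBasis}) to conclude that the row-sum coefficients satisfy $c_i = c_{\gcd(n,i)}$, then applies \cref{lem:permAct}, and finally lifts the statement from $\CSP_{\mathbb{Z}}(n)$ to all of $\CSP(n)$ by noting that every matrix in the rational cone $\CSP(n)$ is a nonnegative combination of integer CSP matrices. Your argument avoids the integrality detour entirely: you first observe that the $k=n$ case of \eqref{eq:CSPCond}, combined with nonnegativity, kills the columns $j \nmid n$, so that the right-hand side of \eqref{eq:CSPCond} depends only on $\gcd(n,k)$; then invariance of $f(\omega_n^k)$ under multiplication of $k$ by a unit, together with invertibility of the discrete Fourier transform, forces the row sums $R_i$ to be constant on orbits of $(\mathbb{Z}/n\mathbb{Z})^*$ acting multiplicatively on $\mathbb{Z}/n\mathbb{Z}$; identifying those orbits with the sets $\{i : \gcd(n,i)=d\}$ finishes the job via \cref{lem:permAct}. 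What the paper's route buys is that the integer-coefficient machinery of \cref{sec:intValPoly} is already in place, so the proof is a one-liner; what your route buys is a fully self-contained argument over $\mathbb{R}$, with no appeal to the rationality of the cone or to conic lifting, and it makes the Galois-orbit structure of the row sums transparent.
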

\begin{proof}
Let $A \in \CSP_{\mathbb{Z}}(n)$. Then the polynomial $f(q) = \sum_{i=0}^{n-1} c_iq^i \in \mathbb{N}[q]$, where $c_i = \sum_{j=1}^n a_{ij}$, satisfies $f(\omega_n^j) \in \mathbb{N}$ for all $j = 1,\dots, n$. By Lemma \ref{lem:modBasis} it follows that $c_{i \Mod{n}} = c_{\text{gcd}(n,i)}$ for all $i = 1, \dots, n$. Hence $A' \in \CSP_{\mathbb{Z}}(n)$ by Lemma \ref{lem:permAct}. Moreover from above, row $i$ and $i'$ clearly have the same row sum in $sA+tB$ for any $A,B \in \CSP_{\mathbb{Z}}(n)$ and $s,t \geq 0$. Hence the property can be lifted to all matrices in $\CSP(n)$. 
\end{proof} \noindent

\section{The universal CSP cone}  \label{sec:univCone}
Let $W_\alpha$ be the set of words with content $\alpha$, that is, $\alpha_i$
is the number of occurrences of the letter $i$ in the words, and let $n$
be the length of the words. Then $C_n$ acts on such words by cyclic shift.
In \cite{Ahlbach2017}, the authors construct a statistic, $\flex(\cdot)$,
which is equidistributed modulo $n$ with major index on $W_\alpha$.
Furthermore, $\flex$ has the property that for every orbit $\orbit$, 
the triple $(\orbit,C_n,\flex)$
exhibits the cyclic sieving phenomenon.
They show that $\flex$ is universal in the following sense:
\begin{definition}
A cyclic sieving phenomena $(X,C_n,\stat)$ is called \emph{universal} if $(\orbit, C_n, \stat)$ exhibits the cyclic sieving phenomenon for every orbit $C_n$-orbit $\orbit$ of $X$.
This is shown in \cite{Ahlbach2017} to be equivalent with the property that
for every $C_n$-orbit $\orbit \subseteq X$ with length $k$,
the sets
\[
\{\stat_n(x): x \in \orbit\} \text{  and } \left\{0, \frac{n}{k}, \frac{2n}{k}, \dotsc, \frac{(k-1)n}{k} \right\}
\]
coincide.
In other words, the statistic $\stat$ is ``evenly distributed'' on each $C_n$-orbit modulo $n$.
We also refer to $\stat$ as being a universal statistic (with respect to $X$ and $C_n$).
\end{definition}

Clearly a universal statistic is uniquely determined modulo $n$ by the orbit structure of $X$ under $C_n$ (up to a choice of total order on the orbits).
We remark that most cyclic sieving phenomena in the literature are \emph{not}
universal. We shall see below how a non-universal
statistic can be turned into a universal one without changing the generating polynomial.
\begin{definition} \label{def:univCSPMat}
A matrix $A = (a_{ij}) \in \CSP(n)$ is called \emph{universal} if there are constants $K_1, \dots, K_n \in \mathbb{R}_{\geq 0}$ such that
\[
a_{ij} = \begin{cases} K_j, & \text{ if } i \equiv 0 \Mod{\frac{n}{j}}, \\ 
0, & \text{otherwise.}  
\end{cases}
\]
for all $1 \leq i,j\leq n$.
Let $\widetilde{\CSP}(n)$ denote the subset of all universal CSP-matrices. Moreover if $\svec = (S_1, \dots, S_n) \in \mathbb{N}^n$ is a sequence such that $j|S_j$ for $j = 1, \dots, n$ and $S_j = 0$ for $j \not | n$, then we let $U(\svec) \in \widetilde{\CSP}(n)$ denote the unique universal CSP-matrix with column sums given by $S_1,\dots, S_n$. 
\end{definition} \noindent
\begin{remark}
Note that $\widetilde{\CSP}(n)$ forms a subcone of $\CSP(n)$ and that the lattice points $\widetilde{\CSP}_{\mathbb{Z}}(n)$ are realized by universal cyclic sieving phenomena.
\end{remark} \noindent
Every CSP-matrix can be linearly projected onto a universal CSP-matrix. Indeed the map
\begin{align*}
P:\CSP(n) &\to \widetilde{\CSP}(n) \\
  a_{ij} &\mapsto \begin{cases} \frac{1}{j} \sum_{i=1}^n a_{ij}, & \text{ if } i \equiv 0 \Mod{\frac{n}{j}}, \\ 0, & \text{ otherwise, } \end{cases}
\end{align*} \noindent
is clearly linear in each entry with $P^2 = P$. By Proposition \ref{lem:aprioriprops} the projection $P$ restricts to a map $P:\CSP_{\mathbb{Z}}(n) \to \widetilde{\CSP}_{\mathbb{Z}}(n)$.

If $A \in \CSP_{\mathbb{Z}}(n)$, then $\delta_1(\mathbf{u},\mathbf{v}) + A$ corresponds to swapping statistic between two elements belonging to orbits of different size. 

Next we show that every CSP matrix $A \in \CSP_{\mathbb{Z}}(n)$ can be obtained from a universal CSP-matrix with the same column sums via a sequence of such swaps while keeping inside $\CSP_{\mathbb{Z}}(n)$. We prove this fact by showing a slightly more general result over the class of non-negative integer matrices with matching row and column sums.
\begin{proposition} \label{prop:swap}
Let $A=(a_{ij})$ and $B = (b_{ij})$ be integer $n \times n$ matrices with non-negative entries having matching row and column sums i.e. $\sum_{i=1}^n a_{ij_0} = \sum_{i=1}^n b_{ij_0}$ and $\sum_{j=1}^n a_{i_0j} = \sum_{j=1}^n b_{i_0j}$ for $1 \leq i_0,j_0 \leq n$. Then there exists swaps $\delta_1(\uvec_r,\vvec_r)$ for $r = 1, \dots, t$ such that 
\begin{equation}
     A = B + \sum_{r=1}^t \delta_1(\mathbf{u}_r, \mathbf{v}_r).
\end{equation} \noindent
Moreover the swaps $\delta_1(\mathbf{u}_r, \mathbf{v}_r)$ can be chosen such that $B + \sum_{r=1}^{t_0} \delta_1(\mathbf{u}_r, \mathbf{v}_r)$ has non-negative entries for all $1 \leq t_0 \leq t$.
\end{proposition}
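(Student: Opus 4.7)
The plan is to induct on $N(B) := \tfrac{1}{2}\sum_{i,j}|a_{ij}-b_{ij}|$. When $N(B) = 0$ we have $A = B$ and the empty sequence of swaps works. For the inductive step ($A \neq B$), I would exhibit a single swap $\delta_1(\uvec_1, \vvec_1)$ for which $B' := B + \delta_1(\uvec_1, \vvec_1)$ is non-negative and $N(B') < N(B)$, then apply the inductive hypothesis to $(A, B')$ and prepend $\delta_1(\uvec_1, \vvec_1)$ to the resulting sequence. Writing $D = A - B$, the matching row/column sum hypothesis says precisely that every row and every column of $D$ sums to zero.

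The swap is constructed as follows. Pick any $(i_1, j_1)$ with $d_{i_1 j_1} > 0$. The zero row-sum of row $i_1$ of $D$ supplies some $j_2 \neq j_1$ with $d_{i_1 j_2} < 0$, and the zero column-sum of column $j_1$ supplies some $i_2 \neq i_1$ with $d_{i_2 j_1} < 0$. The key observation is that $d_{i_1 j_2} < 0$ forces $b_{i_1 j_2} > a_{i_1 j_2} \geq 0$, and similarly $b_{i_2 j_1} \geq 1$. With $\uvec_1 = (i_1, j_1)$ and $\vvec_1 = (i_2, j_2)$, the swap $\delta_1(\uvec_1, \vvec_1)$ adds $1$ to $B$ at $(i_1, j_1)$ and $(i_2, j_2)$ and subtracts $1$ at $(i_1, j_2)$ and $(i_2, j_1)$; the two automatic lower bounds make $B'$ non-negative. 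Inspecting the four affected entries of $D$ shows that $|d|$ drops by $1$ at each of $(i_1, j_1), (i_1, j_2), (i_2, j_1)$ and changes by at most $+1$ at $(i_2, j_2)$, so $\|A - B'\|_1 \leq \|A - B\|_1 - 2$ and the induction closes.

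The delicate point, and essentially the only conceptual obstacle I foresee, is precisely the observation that the two off-diagonal entries of the chosen rectangle automatically correspond to negative entries of $D$ and therefore to positions where $b \geq 1$. Without this matchup one would be forced into longer swap chains or a more elaborate case analysis based on the signed bipartite graph of non-zero entries of $D$; with it the proof reduces to finding one positive entry of $D$ and exploiting the zero-sum conditions in its row and column.
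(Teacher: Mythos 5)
Your argument is correct and is essentially the paper's own proof, just run in the opposite direction. The paper inducts on $\|A-B\|_1$, locates a deficit position $(i,j)$ (where $a_{ij}<b_{ij}$) together with two surplus positions $(i,j')$ and $(i',j)$ in its row and column, and \emph{decrements} $A$ by the corresponding swap, so that the surplus entries guarantee $a_{ij'},a_{i'j}\ge 1$ and hence non-negativity of $A'$; the chain of swaps from $B$ is then obtained by appending the reversed final swap, and the partial-sum non-negativity follows from the inductive hypothesis plus the fact that the last partial sum is $A$ itself. You instead increment $B$ towards $A$, picking $d_{i_1 j_1}>0$ and noting that the forced sign change in the row and column yields $b_{i_1 j_2},b_{i_2 j_1}\ge 1$, which is exactly the same rectangle-and-sign observation with the roles of $A$ and $B$ interchanged. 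Both versions get the same $\ell_1$-drop of at least $2$; your forward formulation is marginally cleaner since the partial-sum condition is immediate at each step rather than checked post hoc, but there is no substantive difference in idea or difficulty. (One trivial omission: you should note that since $D\neq 0$ and every row of $D$ sums to zero, a strictly positive entry of $D$ must exist.)
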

\begin{proof}
Define $\Delta(A)$ to be the quantity 
\[
\Delta(A) \coloneqq ||A-B||
\] 
where $||A|| = \sum_{i,j} |a_{ij}|$.
We say that an entry $a_{ij}$ is in \emph{deficit} if $a_{i j} < b_{ij}$ and in \emph{surplus} if $a_{i j} > b_{ij}$.
We argue by induction on $\Delta(A)$. If $\Delta(A) = 0$, then clearly $A = B$ since $A$ and $B$ both have non-negative entries. Suppose $\Delta(A) > 0$. Then there exists indices $i$ and $j$ such that $a_{ij}-b_{ij} \neq 0$. If $a_{ij}$ is in surplus, then there must exists some row index $i'$ such that $a_{i'j}$ is in deficit, otherwise the sum of column $j$ in $A$ is strictly greater than sum of column $j$ in $B$ which leads to a contradiction. Therefore we may assume $a_{ij}$ is in deficit. Since $a_{ij}$ is in deficit there exists $j' \neq j$ such that $a_{ij'}$ is in surplus, otherwise the sum of row $i$ in $B$ is strictly greater than the sum of row $i$ in $A$. Similarly, there exists a row index $i' \neq i$ such that $a_{i'j}$ is in surplus. It follows that
\[
A' \coloneqq A - \delta_1((i,j'),(i',j))
\] 
has non-negative entries by construction with row and column sums matching that of $A$ (and hence that of $B$). Moreover 
\[ 
\Delta(A')  = \begin{cases} \Delta(A)-4, & \text{ if } a_{i'j'} \text{ is in deficit}, \\ \Delta(A)-2, & \text{ otherwise}. \end{cases}.
\]  
Hence by induction
\begin{align*}
A &=  A' + \delta_1((i,j'),(i',j)) \\ &= B +  \sum_{r=1}^t \delta(\mathbf{u}_r, \mathbf{v}_r) + \delta_1((i,j'),(i',j)).
\end{align*} \noindent
\end{proof}
\begin{corollary}
Let $A = (a_{ij}) \in \CSP_{\mathbb{Z}}(n)$. Write $S_j = \sum_{i=0}^{n-1} a_{ij}$ for the column sums of $A$ for $j=1, \dots, n$ and set $\svec = (S_1,\dots, S_n)$. Then there exists swaps $\delta_1(\mathbf{u}_r, \mathbf{v}_r)$ for $r=1, \dots,t$ such that 
\begin{equation}
     A = U(\svec) + \sum_{r=1}^t \delta_1(\mathbf{u}_r, \mathbf{v}_r).
\end{equation} \noindent
Moreover $U(\svec) + \sum_{r=1}^{t_0} \delta_1(\mathbf{u}_r, \mathbf{v}_r) \in \CSP_{\mathbb{Z}}(n)$ for all $1 \leq t_0 \leq t$.
\end{corollary}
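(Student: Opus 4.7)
My plan is to derive the corollary almost immediately from \cref{prop:swap} applied with $B = U(\svec)$, combined with \cref{lem:dispInvariance} to upgrade non-negativity into membership in $\CSP_{\mathbb{Z}}(n)$. The only genuine content beyond routine bookkeeping is verifying that the hypotheses of \cref{prop:swap} are met, which boils down to showing $A$ and $U(\svec)$ share both row sums and column sums.

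First I would check that $U(\svec)$ actually lies in $\CSP_{\mathbb{Z}}(n)$. This is free: by \cref{lem:aprioriprops} applied to $A$ itself we get $j \mid S_j$ for each $j \mid n$ and $S_j = 0$ otherwise, so the universal matrix $U(\svec)$ from \cref{def:univCSPMat} is well-defined and corresponds to a universal CSP-instance, hence satisfies \eqref{eq:CSPCond}. Column sums of $A$ and $U(\svec)$ agree by definition of $U(\svec)$. For the row sums, let $r_i \coloneqq \sum_{j=1}^n a_{ij}$ and let $r'_i$ be the corresponding row sums of $U(\svec)$. The left-hand side of \eqref{eq:CSPCond} depends on a matrix only through its row sums, while the right-hand side depends only on its column sums. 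Since both $A$ and $U(\svec)$ satisfy \eqref{eq:CSPCond} with the same column sums, the polynomials $\sum_{i=0}^{n-1} r_i q^i$ and $\sum_{i=0}^{n-1} r'_i q^i$ agree at every $n^{\text{th}}$ root of unity; being of degree less than $n$, they must coincide, so $r_i = r'_i$ for all $i$.

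Next, I would invoke \cref{prop:swap} with $A$ and $B = U(\svec)$. This produces integer swaps $\delta_1(\mathbf{u}_r, \mathbf{v}_r)$ for $r = 1, \dots, t$ with
\[
A = U(\svec) + \sum_{r=1}^{t} \delta_1(\mathbf{u}_r, \mathbf{v}_r),
\]
and with the guarantee that every partial sum $U(\svec) + \sum_{r=1}^{t_0} \delta_1(\mathbf{u}_r, \mathbf{v}_r)$ has non-negative (integer) entries. Finally, since swaps leave row and column sums unchanged, an induction on $t_0$ using \cref{lem:dispInvariance} shows that each such partial sum lies in $\CSP(n)$; integrality of the entries places them in $\CSP_{\mathbb{Z}}(n)$.

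The only step requiring any insight is the row-sum matching, which uses the specific form of the CSP condition rather than just the definition of $U(\svec)$; the rest is a direct assembly of \cref{prop:swap} and \cref{lem:dispInvariance}. I do not foresee any serious obstacle.
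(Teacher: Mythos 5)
Your proof is correct and follows essentially the same route as the paper: reduce to \cref{prop:swap} with $B=U(\svec)$ after checking matching row and column sums, then propagate membership in $\CSP_{\mathbb{Z}}(n)$ along the partial sums via \cref{lem:dispInvariance}. The one place where you diverge is the row-sum matching step: the paper derives an explicit formula $R_i=\sum_{j:\,\frac{n}{j}\mid i}\tfrac{1}{j}S_j$ by interpreting both sides as a count of orbits whose stabilizer-order divides $i$ (via \eqref{eq:equivDefRSW} and \cref{rem:numElements}), whereas you observe that the left side of \eqref{eq:CSPCond} is a function of the row sums alone and the right side a function of the column sums alone, so the two row-sum polynomials of degree $<n$ agree at all $n$th roots of unity and hence are identical. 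Both arguments are valid; yours is a bit more elementary and avoids invoking the orbit-counting interpretation, while the paper's version yields the explicit dependence of $R_i$ on the $S_j$ as a by-product. Everything else in your proposal matches the paper's reasoning.
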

\begin{proof}
Let $R_i = \sum_{j=1}^n a_{ij}$ denote the row sums of $A$ for $i = 0,1,\dots, n-1$. Note that the row sums of $A$ are determined uniquely by the column sums of $A$ via
\[ 
R_i = \sum_{j: \frac{n}{j}|i} \frac{1}{j}S_j,  \hspace{0.3cm} \text{ for } i = 0,\dots, n-1,
\]
since both sides count the number of orbits whose stabilizer-order divides $i$ in the corresponding CSP-instance, according to \eqref{eq:equivDefRSW} and Remark \ref{rem:numElements}. Since $A$ and $U(\svec)$ have the same column sums they must therefore have the same row sums. The corollary now follows from \cref{prop:swap} and \cref{lem:dispInvariance}.
\end{proof}

\begin{remark}
\cref{prop:swap} shows that every $A \in \CSP_{\mathbb{Z}}(n)$ can be uniquely expressed as $U(\svec) + B$ where $B=(b_{ij}) \in \mathbb{Z}^{n \times n}$ is a matrix with zero row and column-sums and non-negative values in all entries $b_{k \ell}$ unless $(k,\ell) = (\frac{ni}{j},j)$ where $0 \leq i < j$ and $j|n$.
\end{remark}
\begin{construction}
If $(C_m,X,f(q))$ and $(C_n,Y,g(q))$ are two CSP-triples, then we can construct a new CSP-triple of the form $(C_{mn}, X \times Y, h(q))$ where $h(q)$ is a polynomial of degree less than $mn$ which may be expressed as certain convolution of $f$ and $g$. 

Let $(x,y) \in X \times Y$ and suppose $o(x) = i$, $o(y)= j$ with respect to the actions of $C_m$ on $X$ and $C_n$ on $Y$ respectively.
Let $C_{mn}$ act on $(x,y)$ via
\[
\sigma_{mn}^{is+t} \cdot (x,y) \coloneqq (\sigma_m^{t} \cdot x, \sigma_n^{s} \cdot y) 
\]
where $0 \leq t < i$ and $s \in \mathbb{Z}$. Note that $(x,y)$ has order $ij$ under the above action. By Remark \ref{rem:numElements}, the number of elements of order $i$ and $j$ with respect to the actions of $C_m$ on $X$ and $C_n$ on $Y$ are given respectively by 
\begin{align*}
S_i = \sum_{\ell | i} \mu(\ell/i)f(\omega_m^{\ell}), \hspace{0.2cm} T_j = \sum_{\ell | j} \mu(\ell/j)g(\omega_n^{\ell}).
\end{align*} \noindent
Therefore the action of $C_{mn}$ on $X \times Y$ has 
\[
\sum_{ij = k} S_i T_j,
\]
elements of order $k$. By \eqref{eq:equivDefRSW} the coefficients $c_r$ of the unique polynomial $h(q) = \sum_{r=0}^{mn-1} c_rq^r$ (mod $q^{mn}-1$) complementing the action of $C_{mn}$ on $X \times Y$ to a CSP is given by the number of orbits whose stabilizer-order divides $r$, that is,
\[
c_r = \sum_{k: \frac{mn}{k} | r }\sum_{ij = k} \frac{1}{k} S_iT_j.
\] 
\end{construction}
The above construction gives rise to a natural product on universal CSP-matrices. Given a vector $\svec = (S_d)$, we define its \defin{number-theoretical series} as
the formal power-series
\begin{align}
NS(\svec) \coloneqq \sum_{1 \leq d} S_d x^{e_1}_{p_1}\dotsc x^{e_\ell}_{p_\ell}
\end{align}
where $d = p_1^{e_1} \dotsc p_{\ell}^{e_\ell}$ is the prime factorization of $d$.

Given two vectors $\svec$ and $\tvec$ of length $m$ and $n$, respectively,
define the vector $\svec \boxtimes \tvec$ of length $mn$ via the identity
\[
 NS(\svec \boxtimes \tvec) = NS(\svec)\cdot NS(\tvec).
\]
In other words, coordinate $k$ in $\svec \boxtimes \tvec$ is given by $\sum S_i T_j$,
where the sum ranges over all natural numbers $i$, $j$ such that $ij=k$.
Note that $\boxtimes$ is symmetric and transitive, and $|\svec \boxtimes \tvec| = |\svec|\cdot|\tvec|$
where $|\cdot|$ denotes the sum of the entries.
\medskip

\begin{proposition}
Let $U(\svec) \in \widetilde{\CSP}(m)$ and $U(\tvec) \in \widetilde{\CSP}(n)$. Then 
\[
U(\svec) \boxtimes U(\tvec) \coloneqq U(\svec \boxtimes \tvec) \in \widetilde{\CSP}(mn).
\]
\end{proposition}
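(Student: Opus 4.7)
The plan is to unwind the definition of $U(\cdot)$ from \cref{def:univCSPMat} and reduce the claim to checking that the length-$mn$ vector $\svec \boxtimes \tvec$ is an admissible input to that definition. Concretely, writing $(\svec \boxtimes \tvec)_k = \sum_{ij=k} S_i T_j$, I need to verify:
(a) the entries are non-negative integers;
(b) $k \mid (\svec \boxtimes \tvec)_k$ for every $k \mid mn$; and
(c) $(\svec \boxtimes \tvec)_k = 0$ whenever $k \nmid mn$.
Once (a)--(c) hold, the matrix $U(\svec \boxtimes \tvec)$ lies in $\widetilde{\CSP}(mn)$ by definition, and this is precisely the statement of the proposition because the left-hand side $U(\svec)\boxtimes U(\tvec)$ is declared to equal $U(\svec \boxtimes \tvec)$.

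For (a), non-negativity and integrality of $(\svec \boxtimes \tvec)_k$ are immediate from $S_i, T_j \in \mathbb{N}$. For (b), fix any factorization $k = ij$; the admissibility of $\svec$ and $\tvec$ yields $i \mid S_i$ and $j \mid T_j$, so $k = ij \mid S_i T_j$ termwise, and therefore $k$ divides the sum $(\svec \boxtimes \tvec)_k$. For (c), I would argue contrapositively: suppose some summand $S_i T_j$ with $ij = k$ is nonzero. Then $S_i \neq 0$ forces $i \mid m$ and $T_j \neq 0$ forces $j \mid n$, by the defining hypotheses on $\svec$ and $\tvec$; consequently $k = ij$ divides $mn$. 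Taking the contrapositive, $k \nmid mn$ implies every summand vanishes, so $(\svec \boxtimes \tvec)_k = 0$.

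There is no substantive obstacle here; the proposition is really just an algebraic compatibility between the convolution $\boxtimes$ and the two divisibility constraints that cut out the image of $U(\cdot)$. The genuine combinatorial content already sits in the preceding construction, which motivates the definition of $\boxtimes$ via the orbit-counting identity from \cref{rem:numElements} applied to the product action of $C_{mn}$ on $X \times Y$; the proposition itself is the matrix-level shadow of that construction.
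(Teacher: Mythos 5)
Your proof is correct and follows essentially the same route as the paper's: both arguments reduce the claim to checking that $\svec \boxtimes \tvec$ satisfies the divisibility and vanishing conditions in \cref{def:univCSPMat}, using termwise divisibility $ij \mid S_i T_j$ and the fact that nonzero summands force $i \mid m$, $j \mid n$ (hence $k \mid mn$). Your three-point organization is slightly more explicit than the paper's one-line argument, but the content is identical.
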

\begin{proof}
We have that $i|S_i$ and $j|T_j$ for $i = 1, \dots, m$, $j= 1, \dots, n$ and $S_i,T_j = 0$ for $i \not | m, \thickspace j \not | n$. It follows that
\[
(\svec \boxtimes \tvec)_k = \sum_{\substack{ij = k \\ i|m, j|n}} S_iT_j,
\]
with $k|(\svec \boxtimes \tvec)_k$ for $k = 1, \dots, mn$ and $(\svec \boxtimes \tvec)_k = 0$ if $k \not | mn$.
\end{proof}
\section{Geometry of the CSP cone}  \label{sec:geomCSP}
The below theorem provides the half-space description of $\CSP(n)$, showing that it is indeed a rational convex polyhedral cone. 
\begin{theorem} \label{thm:nhypdesc}
Let $n \in \setN \setminus \{0\}$ and $A= (a_{ij}) \in \mathbb{R}^{n \times n}$.
Let the divisors of $n$ be given by
\[
1 = c_1 < c_2 < \cdots < c_d = n.
\] 
Let
\[
H_k(\xvec) \coloneqq \sum_{i=0}^{n-1} \sum_{j=2}^d \alpha_{ijk} x_{ij} \in \mathbb{Z}[\mathbf{x}],
\]
where
\begin{align*}
\alpha_{ijk} \coloneqq \begin{cases} -n+ \frac{n}{c_j}, & \text{ if }  i = k \text{ and } k \equiv 0\Mod{\frac{n}{c_j}}, \\
-n & \text{ if } i = k \text{ and } k \not \equiv 0\Mod{\frac{n}{c_j}}, \\ 
\frac{n}{c_j}, & \text{ if } i \neq k \text{ and } k \equiv 0\Mod{\frac{n}{c_j}}, \\ 
0 & \text{ if } i \neq k \text{ and } k \not \equiv 0\Mod{\frac{n}{c_j}}. 
\end{cases}   
\end{align*} \noindent
Then $A$ is a CSP matrix if and only if
\[
A = (\avec_1|\avec_2|\cdots |\avec_n),
\]
where 
\begin{align*}
\avec_1 &= (x_{01}, H_1(\xvec), \dots, H_{n-1}(\xvec))^t, \\ \avec_c &= \begin{cases} (nx_{0c}, nx_{1c}, \dots, nx_{(n-1)c})^t, & \text{ if } c | n, \\  \mathbf{0}, & \text{ otherwise, } \end{cases}
\end{align*} \noindent
for $c = 2,\dots, n$ with $H_k(\xvec) \geq 0$ and $x_{ij} \geq 0$ for all $i,j,k$.
\end{theorem}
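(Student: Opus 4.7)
The plan is to solve the CSP equations \eqref{eq:CSPCond} explicitly for the first column of $A$ in terms of the remaining entries, treating the latter as free parameters. The key observation is that the $n$ equations form a DFT-type system that can be inverted directly, after which the identification with the piecewise formula for $\alpha_{ijk}$ is mostly bookkeeping.

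First I would isolate the $k=n$ equation. Since $\omega_n^{ni}=1$, it reads $\sum_{i,j}a_{ij}=\sum_{i}\sum_{j\mid n}a_{ij}$, equivalently $\sum_i\sum_{j\nmid n}a_{ij}=0$. Combined with $a_{ij}\geq 0$, this forces $a_{ij}=0$ for every $j\nmid n$, accounting for the vanishing columns $\avec_c=\mathbf{0}$ in the statement. Having disposed of the non-divisor columns, treat the entries in columns $c\mid n$, $c\geq 2$ as free parameters by setting $a_{ic}=n\,x_{ic}$ (the factor $n$ is a bookkeeping choice that keeps all subsequent coefficients in $\setZ$), and also put $x_{01}\coloneqq a_{01}$.

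Write $R_i=\sum_j a_{ij}$, $S_j=\sum_i a_{ij}$, and $T_k=\sum_{j\mid k}S_j$. The remaining CSP conditions read $\sum_{i=0}^{n-1}R_i\,\omega_n^{ki}=T_k$ for $k=1,\dots,n$, a Vandermonde system whose inversion gives $R_i=\tfrac{1}{n}\sum_{k=1}^nT_k\,\omega_n^{-ki}$. Expanding $T_k=\sum_{j\mid k}S_j$, swapping the order of summation, and applying the identity $\sum_{m=1}^{n/c}\omega_n^{-cmi}=\tfrac{n}{c}$ when $(n/c)\mid i$ and $0$ otherwise yields the clean formula
\[
R_i \;=\; \sum_{\substack{c\mid n \\ (n/c)\mid i}} \frac{S_c}{c}.
\]
For $0<k<n$ the divisor $c=1$ cannot contribute (since $n\nmid k$), so substituting $S_c=n\sum_{i'}x_{i'c}$ and $a_{k1}=R_k-\sum_{c\mid n,\,c\geq 2}n\,x_{kc}$ produces $a_{k1}$ as a $\setZ$-linear combination of the $x_{i'c}$'s.

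I would then read off the coefficient of each $x_{ij}$ and, splitting on whether $i=k$ and whether $(n/c_j)\mid k$, verify that it matches $\alpha_{ijk}$ case by case; this establishes $a_{k1}=H_k(\xvec)$ for $k=1,\dots,n-1$. The row $k=0$ imposes no new constraint: substituting the just-derived $a_{i1}$, $i\geq 1$, back into $R_0=a_{01}+\sum_{c\mid n,\,c\geq 2}n\,x_{0c}$ collapses to a tautology, so $a_{01}=x_{01}$ remains truly free. Finally, the non-negativity $A\in\setR_{\geq 0}^{n\times n}$ is equivalent to $x_{ic}\geq 0$ for every free variable together with $H_k(\xvec)\geq 0$ for $k\geq 1$, matching the statement exactly. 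The main obstacle is the coefficient bookkeeping in the case split for $\alpha_{ijk}$; everything else is inverse DFT and an interchange of summation.
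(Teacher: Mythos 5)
Your proof is correct and follows essentially the same route as the paper's: both zero out the non-divisor columns, invert the DFT/Vandermonde system $\sum_i R_i\,\omega_n^{ki} = T_k$, and read off the first-column entries in terms of the free parameters $x_{ic}$, matching the case-split for $\alpha_{ijk}$. Your intermediate step $R_i=\sum_{c\mid n,\,(n/c)\mid i}S_c/c$ is a slightly cleaner packaging than the paper's direct inversion of the block matrix $(V(\nthrootvec)-J_1)$, but it is the same underlying computation; the one point you wave at — that the $i=0$ relation is a consequence of the others, so $a_{01}$ is genuinely free — is true and worth actually verifying in a final write-up (the $n-1$ independent constraints then match the claimed dimension $n(d-1)+1$).
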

\begin{proof}
For $\zvec \in \setC^{n-1}$, let
\[
V(\zvec) \coloneqq 
\begin{pmatrix}
z_1     & z_1^2     & \hdots & z_1^{n-1} \\
z_2     & z_2^2     & \hdots & z_2^{n-1} \\
\vdots  & \vdots    &        & \vdots    \\
z_{n-1} & z_{n-1}^2 & \hdots & z_{n-1}^{n-1}
\end{pmatrix}.
\]
Let $\nthrootvec \coloneqq (\omega_n,\omega_n^2, \dots, \omega_n^{n-1})$ and set
\[
B_j \coloneqq (\mathbf{1}^t|V(\nthrootvec)) - J_{c_j}
\] 
for $j = 1, \dots, d$ where $\mathbf{1} \coloneqq (1, \dots, 1) \in \setR^{n-1}$ and
\[
J_{c_j}(k,\ell) \coloneqq \begin{cases} 1 & \text{ if } c_j|k, \\ 0 & \text{ otherwise }\end{cases}
\]
for $1 \leq k \leq n-1$ and $1 \leq \ell \leq n$.
Consider the matrix
\[
B \coloneqq \left [B_1| B_2| \cdots |B_d \right].
\]
Then $A = (a_{ij}) \in \mathbb{R}_{\geq 0}^{n \times n}$ satisfies \eqref{eq:CSPCond} if and only if
\begin{equation} \label{eq:csphom}
B\avec = \mathbf{0},
\end{equation}
where $\avec = (\avec_1| \cdots |\avec_d)^t$
and $\avec_j = (a_{1c_j}, \dots, a_{nc_j})$ for $j = 1, \dots, d$. 
Note that the defining CSP-equations \eqref{eq:CSPCond} immediately give that $a_{ij} = 0$ for all $1 \leq i \leq n$ and $j \nmid n$.
We claim that the real solutions to \eqref{eq:csphom} are of the form
\begin{equation} \label{eq:CSPSol}
\avec_1 = \begin{pmatrix} x_{01} \\ H_1(\xvec) \\ \vdots \\ H_{n-1}(\xvec)  \end{pmatrix}, \hspace{0.5cm} 
\avec_j = \begin{pmatrix} nx_{0j} \\ nx_{1j} \\ \vdots \\ nx_{(n-1)j} \end{pmatrix}
\end{equation} \noindent
where $x_{01},x_{ij} \in \setR$ for $0 \leq i \leq n-1$, $2 \leq j \leq d$ and 
\[
H_k(\xvec) = \sum_{i=0}^{n-1} \sum_{j=2}^d \alpha_{ijk} x_{ij}
\]
for some $\alpha_{ijk} \in \setZ$, $k = 1, \dots, n-1$. 
Since $B$ has full rank $n-1$, the solutions \eqref{eq:CSPSol} make up the whole null space of $B$ for dimensional reasons. Thus we only need to concern ourselves with the existence of solutions of the form \eqref{eq:CSPSol}.

Given \eqref{eq:csphom} and supposing \eqref{eq:CSPSol} we thus require
\begin{equation}
(V(\nthrootvec)-J_1) \boldsymbol{\alpha}^{(ij)} = \uvec^{(ij)},
\end{equation} \noindent
for $i = 0, \dots, n-1$ and $j = 2, \dots, d$
where
\begin{align*}
\boldsymbol{\alpha}^{(ij)} \coloneqq \begin{pmatrix}
\alpha_{ij1} \\ \alpha_{ij2} \\ \vdots \\ \alpha_{ij(n-1)}
\end{pmatrix}, \hspace{0.5cm}  \uvec^{(ij)} &\coloneqq \begin{pmatrix}
u^{(ij)}_1 \\ u^{(ij)}_2 \\ \vdots \\ u^{(ij)}_{n-1}
\end{pmatrix}, \hspace{0.5cm}
u^{(ij)}_{k} \coloneqq 
\begin{cases} -n\omega_n^{ik} + n, & \text{ if } c_j | k, \\ 
-n\omega_n^{ik}, & \text{ otherwise }   
\end{cases}. \hspace{0.2cm}
\end{align*} \noindent
Note that 
\[
(V(\nthrootvec)-J_1)^{-1} = \frac{1}{n} V(\overline{\nthrootvec}).
\]
Therefore
\[
\boldsymbol{\alpha}^{(ij)} = \frac{1}{n}V(\overline{\nthrootvec}) \uvec^{(ij)},
\]
which gives
\begin{align*}
\alpha_{ijk} &= \sum_{\ell=1}^{n-1}\frac{\overline{\omega}_n^{k \ell}}{n} (-n\omega_n^{i \ell }) + \sum_{\substack{\ell = 1\\ c_j | \ell}}^{n-1} \frac{\overline{\omega}_n^{k \ell}}{n}n   \\ &= - \sum_{\ell = 0}^{n-1} (\omega_n^{(i-k)})^{\ell} + \sum_{s = 0}^{\frac{n}{c_j}-1} ((\omega_n^{c_j})^k)^s  \\ &= \begin{cases} -n+ \frac{n}{c_j}, & \text{ if }  i = k \text{ and } k \equiv 0\Mod{\frac{n}{c_j}}, \\ -n & \text{ if } i = k \text{ and } k \not \equiv 0\Mod{\frac{n}{c_j}}, \\ \frac{n}{c_j}, & \text{ if } i \neq k \text{ and } k \equiv 0\Mod{\frac{n}{c_j}}, \\ 0 & \text{ if } i \neq k \text{ and } k \not \equiv 0\Mod{\frac{n}{c_j}}. \end{cases}
\end{align*} \noindent
Hence the theorem follows.
\end{proof} \noindent
The following corollary follows immediately from Theorem \ref{thm:nhypdesc}. 
\begin{corollary}
Let $n \in \mathbb{N} \setminus \{0\}$ and $d \coloneqq |\{ c \in \mathbb{N} : c|n  \}|$ denote the number of divisors of $n$.
Then $\CSP(n)$ has dimension $n(d-1)+1$.
\end{corollary}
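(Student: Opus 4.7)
The plan is to deduce the statement directly from the parametrization produced in \cref{thm:nhypdesc}. That theorem describes every $A \in \CSP(n)$ as a linear function of the free variables $x_{01}$ together with $\{x_{ij} : 0 \leq i \leq n-1,\ 2 \leq j \leq d\}$, subject to the inequalities $x_{01} \geq 0$, $x_{ij} \geq 0$, and $H_k(\mathbf{x}) \geq 0$ for $k = 1, \ldots, n-1$. The map sending these parameters to $A$ is visibly injective: $x_{01} = a_{0,1}$ is the top-left entry, and $x_{ij} = a_{i,c_j}/n$ recovers all remaining entries in the divisor columns indexed by $c_2, \ldots, c_d$, while the entries $a_{k,1}$ for $k \geq 1$ are then determined by $H_k(\mathbf{x})$. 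Columns indexed by non-divisors of $n$ vanish identically. Counting the number of independent parameters gives $1 + n(d-1)$, so the linear span of $\CSP(n)$ inside $\mathbb{R}^{n \times n}$ has dimension at most $n(d-1)+1$.

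To show the bound is tight, I would exhibit a point in the relative interior of $\CSP(n)$, equivalently a choice of parameters at which every defining inequality is strict. A convenient choice is $x_{01} = 1$, $x_{0,d} = 1$, and $x_{ij} = \varepsilon$ for all other indices $(i,j)$ with $j \geq 2$. Since the largest divisor is $c_d = n$, one has $n/c_d = 1$, which divides every $k \in \{1, \ldots, n-1\}$, so by the definition of the coefficients $\alpha_{ijk}$ we get $\alpha_{0,d,k} = n/c_d = 1 > 0$ uniformly in $k$. Hence the contribution of $x_{0,d}$ to $H_k(\mathbf{x})$ equals $1$ for every $k$, whereas the remaining contribution $\varepsilon \sum_{(i,j) \neq (0,d)} \alpha_{ijk}$ is bounded by a constant independent of $\varepsilon$. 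Choosing $\varepsilon$ sufficiently small therefore forces $H_k(\mathbf{x}) > 0$ for each $k = 1, \ldots, n-1$, while also keeping all the $x$-variables strictly positive.

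Combining the two bounds yields $\dim \CSP(n) = n(d-1)+1$, as claimed. No step is genuinely hard; the only non-formal input is the interior-point construction, which reduces to the elementary observation that $c_d = n$ makes $n/c_d = 1$ divide every $k$, so the coordinate $x_{0,d}$ contributes positively to all of the $H_k$ simultaneously.
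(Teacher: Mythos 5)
Your proof is correct and follows the route the paper implicitly intends: the paper states only that the corollary ``follows immediately from'' \cref{thm:nhypdesc}, and your argument fills in exactly the two facts one needs --- that the parametrization by $x_{01}$ and $x_{ij}$ ($0\le i\le n-1$, $2\le j\le d$) is an injective linear map onto $\CSP(n)$, giving an upper bound of $n(d-1)+1$ on the dimension, and that the feasible region in parameter space has nonempty interior, giving the matching lower bound. Your interior-point construction is sound: since $c_d=n$ gives $n/c_d=1$, the coefficient $\alpha_{0,d,k}$ equals $1$ for every $k\in\{1,\dotsc,n-1\}$ (always the ``$i\neq k$ and $k\equiv 0$'' case), so setting $x_{0,d}=1$ contributes $1$ to every $H_k$, and taking the remaining $x_{ij}$ to be a sufficiently small $\varepsilon>0$ keeps all $x_{ij}>0$ and all $H_k(\mathbf{x})>0$ simultaneously. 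This is a complete and correct proof of a statement the paper leaves to the reader.
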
 \noindent
Recall that a polyhedral cone is given by
$P = \{ \xvec \in \mathbb{R}^n: A\xvec \geq \bvec \}$ for some $n \times n$ matrix $A$.
A non-zero element $\xvec$ of a polyhedral cone $P$ is called an \emph{extreme ray} if there are $d-1$ linearly independent constraints that are active at $\xvec$ (\emph{i.e.} hold with equality at $\xvec$). If $\xvec$ is an extreme ray, then $\lambda \xvec$ is also an extreme ray for $\lambda > 0$.
Two extreme rays that are positive multiples of each other are called \emph{equivalent}.
Equivalent extreme rays correspond to the same $d-1$ active constraints. Extreme rays can also be defined as points in $\xvec \in P$ that cannot be expressed as a convex combination of two points in the interior of $P$.

Below we give an explicit description of a subset of the extreme rays of $\CSP(n)$. This subset includes all extreme rays of the universal CSP-cone $\widetilde{\CSP}(n)$ (see Corollary \ref{cor:univExtremeRays}). When $n=p$ for some prime number $p$, then we get all the extreme rays (see Corollary \ref{cor:prays}).
\begin{theorem} \label{thm:extremeRays}
Let $n \in \mathbb{N}\setminus \{0\}$ and suppose 
\[
1 = c_1 < c_2 < \cdots < c_{d-1}< c_d = n
\] 
are the divisors of $n$. Let $\ell_0 \in [d]$. Then $\rvec = (r_{ij}) \in \mathbb{R}^{n \times n}$ is an extreme ray of $\CSP(n)$ if 
\[
r_{ij} = \begin{cases} 
1, & \text{ if } (i,j) = (0,c_{\ell_0}), \\ 
\frac{1}{c_{\ell_0}-|I|}, & \text{ if } i \in  I \text{ and } j = c_{\ell_0}, \\ 0, & \text{ otherwise } \end{cases}
\]
for $0 \leq i \leq n-1$ and $1 \leq j \leq n$
where $I \subseteq \{ t \frac{n}{c_{\ell_0}} \in \mathbb{N} : 1 \leq t < c_{\ell_0} \}$. In particular the number of extreme rays of $\CSP(n)$ is at least
\[
\frac{1}{2}\sum_{\ell=1}^{d} 2^{c_\ell}.
\]  
\end{theorem}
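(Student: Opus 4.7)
The approach is to apply the half-space description of $\CSP(n)$ from \cref{thm:nhypdesc}, which realizes $\CSP(n)$ in ambient dimension $D = 1 + n(d-1)$ with free parameters $x_{01}$ and $x_{ic}$ for divisors $c$ of $n$ with $c > 1$, subject to non-negativity of these parameters together with the $n - 1$ inequalities $H_k(\xvec) \geq 0$. Extreme rays of $\CSP(n)$ are precisely the points where $D - 1$ defining inequalities are active and linearly independent.

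The first step is to specify $\rvec$ in this parametrization: take $x_{01} = 0$, $x_{ic} = 0$ for every divisor $c$ of $n$ with $1 < c \neq c_{\ell_0}$, and $x_{i, c_{\ell_0}} = r_{i, c_{\ell_0}}/n$ for each $i$. Using the explicit formula for $\alpha_{ij k}$, a direct computation gives $H_k(\xvec) = 0$ except when $k = tn/c_{\ell_0}$ with $1 \leq t < c_{\ell_0}$ and $k \notin I$, in which case $H_k(\xvec) = \tfrac{1}{c_{\ell_0} - |I|} \geq 0$. Hence all defining inequalities of $\CSP(n)$ hold at $\rvec$, noting that the first-column entries of the resulting CSP matrix coincide with these $H_k$-values.

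The second step is to establish extremality by exhibiting $D - 1$ linearly independent active constraints at $\rvec$, namely: (a) $x_{01} = 0$; (b) $x_{ic} = 0$ for the $n(d-2)$ pairs $(i, c)$ with $1 < c \neq c_{\ell_0}$ a divisor of $n$; (c) $x_{i, c_{\ell_0}} = 0$ for the $n - 1 - |I|$ rows $i \in \{1, \ldots, n-1\} \setminus I$; and (d) $H_i(\xvec) = 0$ for each of the $|I|$ rows $i \in I$. These tally $1 + n(d-2) + (n - 1 - |I|) + |I| = n(d-1) = D - 1$. Independence of (a)--(c) is automatic as they fix distinct coordinates; for the $H_i$-equations in (d), unpacking $\alpha_{ij k}$ shows that, modulo (a)--(c), they force $x_{i, c_{\ell_0}}$ to be constant over $i \in I$ and determine its ratio to $x_{0, c_{\ell_0}}$, leaving a one-dimensional solution space spanned by $\rvec$.

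The count is then immediate: for each $\ell_0 \in [d]$, the set $\{tn/c_{\ell_0} : 1 \leq t < c_{\ell_0}\}$ has cardinality $c_{\ell_0} - 1$, so $I$ ranges over $2^{c_{\ell_0} - 1}$ subsets, yielding $\sum_{\ell=1}^{d} 2^{c_\ell - 1} = \tfrac{1}{2} \sum_{\ell=1}^{d} 2^{c_\ell}$ distinct extreme rays (distinct because the supports differ). The main technical obstacle lies in (d): verifying that the $|I|$ equations $H_i(\xvec) = 0$ are mutually independent and remain independent of the $x$-constraints (a)--(c), which requires a careful rank computation in the $(|I| + 1)$-dimensional subspace spanned by the remaining free coordinates $x_{0, c_{\ell_0}}$ and $x_{i, c_{\ell_0}}$ for $i \in I$.
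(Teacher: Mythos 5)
Your strategy — inserting the candidate ray into the parametrization of \cref{thm:nhypdesc}, checking the defining inequalities, and then exhibiting $n(d-1)$ linearly independent active constraints — is a direct verification of extremality and differs from the paper's argument, which assumes $\rvec$ is already an extreme ray supported in column $c_{\ell_0}$ and derives the formula for its entries. Your route is arguably the cleaner one for proving the forward implication the theorem actually asserts.

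There is, however, a genuine mismatch that you noticed but did not resolve. You correctly compute $H_k(\xvec) = \tfrac{1}{c_{\ell_0}-|I|} > 0$ for each $k = t n/c_{\ell_0}$ with $1 \le t < c_{\ell_0}$ and $k \notin I$. By \cref{thm:nhypdesc} the first column of the CSP matrix associated to $\xvec$ is $(x_{01}, H_1(\xvec), \dots, H_{n-1}(\xvec))^t$, so whenever $I$ is a \emph{proper} subset of $\{tn/c_{\ell_0} : 1 \le t < c_{\ell_0}\}$, the matrix you actually exhibit has strictly positive entries in the first column. This conflicts with the theorem's specification $r_{ij}=0$ for all $(i,j)$ outside $\{(0,c_{\ell_0})\}\cup(I\times\{c_{\ell_0}\})$: that $\rvec$ has a zero first column and is then not even an element of $\CSP(n)$ (for instance with $n=4$, $c_{\ell_0}=4$, $I=\{1\}$ the CSP condition at $k=1$ reads $1 + \tfrac{1}{3}\omega_4 = 0$, which is false). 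Your remark that ``the first-column entries of the resulting CSP matrix coincide with these $H_k$-values'' concedes that the matrix you verify is not the stated $\rvec$, rather than resolving the discrepancy. The theorem should instead specify $r_{k1}=H_k(\xvec)$; with that correction your argument — including the rank computation showing the $|I|$ equations $H_k(\xvec)=0$, $k\in I$, cut the $(|I|+1)$-dimensional coordinate subspace spanned by $x_{0,c_{\ell_0}}$ and $x_{i,c_{\ell_0}}$, $i\in I$, down to a line — is complete. Note also that the paper's own proof inherits the same ambiguity: it assumes $r_{ij}=0$ for $j\ne c_{\ell_0}$ yet imposes $H_k(\xvec)=0$ only for $k\in I$; a truly zero first column would force $H_k=0$ for all $k$, which in turn collapses $\rvec$ to the universal ray $\rvec^{(\ell_0)}$ of \cref{cor:univExtremeRays} regardless of $I$. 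In short: flag the first-column correction explicitly, and the rest of your proof stands.
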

\begin{proof}
By Theorem \ref{thm:nhypdesc}, $\CSP(n)$ is isomorphic to the polyhedral cone
\[
\{ \xvec \in \mathbb{R}^{n(r-1)+1} : n\xvec \geq 0 \text{ and } H_k(\xvec) \geq 0 \text{ for all } k = 1, \dots, n-1 \}.
\]
Let $\rvec = (r_{ij}) \in \mathbb{R}^{n \times n}$ be an extremal ray of $\CSP(n)$ such that $r_{ij} = 0$ if $j \neq c_{\ell_0}$.
Note that the defining inequalities of $\CSP(n)$ imply in particular that $r_{ij} \geq 0$ for all $0 \leq i < n$ and $1 \leq j \leq d$.

Suppose first that $r_{0c_{\ell_0}}=0$. Let $k \in [n-1]$ be such that $r_{kc_{\ell_0}} \geq r_{ic_{\ell_0}}$ for all $i \in [n-1]$.
Suppose for a contradiction that $r_{kc_{\ell_0}} > 0$.
The maximality of $r_{kc_{\ell_0}}$ implies $\frac{r_{ic_{\ell_0}}}{r_{kc_{\ell_0}}} \leq 1$.
The defining inequalities of the polyhedral cone $\CSP(n)$ gives $-nr_{ic_{\ell_0}} \geq 0$ for $i \not \equiv 0 \Mod{\frac{n}{c_{\ell_0}}}$, which implies that $r_{ic_{\ell_0}} = 0$ for $i \not \equiv 0 \Mod{\frac{n}{c_{\ell_0}}}$.
Thus we may assume $k \equiv 0 \Mod{\frac{n}{c_{\ell_0}}}$. Now, $H_k(\xvec) \geq 0$ gives
\[
0 \leq -n + \frac{n}{c_{\ell_0}} + \sum_{\substack{i \in [n-1] \setminus k \\ \frac{n}{c_{\ell_0}} | i}} \frac{r_{ic_{\ell_0}}}{r_{kc_{\ell_0}}} \leq -n + \frac{n}{c_{\ell_0}} + c_{\ell_0}-2,
\] 
which holds if and only if
$c_{\ell_0} \leq \frac{n+2}{2} - \Delta$ or $c_{\ell_0} \geq \frac{n+2}{2} + \Delta$ where $\Delta = \left ( \left ( \frac{n+2}{2} \right )^2 -n \right )^{1/2}$. Since $\frac{n+2}{2} - \Delta < 1$ and $\frac{n+2}{2} + \Delta > n$ for $n > 0$ whereas $1 \leq c_{j_0} \leq n$ this gives a contradiction.

Hence we may assume $r_{0c_{\ell_0}} > 0$. Let 
\[
M_{\ell} \coloneqq \left \{ t \frac{n}{c_\ell} \in \mathbb{N} : 1 \leq t < c_{\ell} \right \}.
\]
Suppose $I \subseteq M_{c_{\ell_0}}$ such
that $r_{ic_{\ell_0}} > 0$ for $i \in I$ and $r_{ic_{\ell_0}} = 0$ for $i \in M_{c_{\ell_0}} \setminus I$.
Since $\rvec$ is an extreme ray there are by definition $n(d-1)$ linearly
independent constraints active at $\rvec$. Since $r_{ij} = 0$ for $j \neq c_{\ell_0}$
and $r_{ic_{\ell_0}} = 0$ for $i \in [n-1]\setminus I$ there are $n(d-2) + 1 + (n-1)- |I|$ active constraints covered.
Note that we have $(-n + \frac{n}{c_{\ell_0}})r_{kc_{\ell_0}} + \sum_{i \neq k} \frac{n}{c_{\ell_0}}r_{ic_{\ell_0}} > 0$
for $k \not \in I$ and $nr_{kc_{\ell_0}} > 0$ for $k \in M_{c_{\ell_0}}\setminus I$.
Hence the remaining $|I|$ inequalities must be active at $\rvec$ which gives
\begin{equation} \label{eq:nconstr}
\left (-n + \frac{n}{c_{\ell_0}} \right )r_{kc_{\ell_0}} + \sum_{i \neq k} \frac{n}{c_{\ell_0}}r_{ic_{\ell_0}} = 0
\end{equation} \noindent
for $k \in I$. If $I = \emptyset$, then the only non-zero entry of $\rvec$ is $r_{0c_{\ell_0}}$. 
Suppose $I \neq \emptyset$. Summing the equations (\ref{eq:nconstr}) and dividing by $\frac{n}{c_{\ell_0}}r_{0c_{\ell_0}}$, we get
\[
0 = \frac{c_{\ell_0}}{nr_{0c_{\ell_0}}} \sum_{i \in I} \left ( \left ( -n + \frac{n}{c_{\ell_0}} \right ) r_{ic_{\ell_0}} + \sum_{k \neq i} \frac{n}{c_{\ell_0}} r_{k c_{\ell_0}} \right ) = (-c_{\ell_0}+ |I|)\sum_{i \in I} \frac{r_{ic_{\ell_0}}}{r_{0c_{\ell_0}}} + |I|.
\]
Hence we get the average ratio
\begin{equation} \label{eq:avgratio}
\frac{1}{|I|} \sum_{i \in I} \frac{r_{ic_{\ell_0}}}{r_{0c_{\ell_0}}} = \frac{1}{c_{\ell_0}- |I|}
\end{equation}
Suppose
\[
\frac{r_{kc_{\ell_0}}}{r_{0c_{\ell_0}}} > \frac{1}{c_{\ell_0}- |I|}
\]
for some $k \in I$.
Then by dividing \eqref{eq:nconstr} with $\frac{n}{c_{\ell_0}}r_{0c_{\ell_0}}$ and using \eqref{eq:avgratio} we have
\begin{align*}
0 &= \left ( -c_{\ell_0} + 1 \right )\frac{r_{kc_{\ell_0}}}{r_{0c_{\ell_0}}} + 1 + \sum_{i\in I} \frac{r_{ic_{\ell_0}}}{r_{0c_{\ell_0}}} - \frac{r_{k c_{\ell_0}}}{r_{0c_{\ell_0}}} \\ &= -c_{\ell_0} \frac{r_{kc_{\ell_0}}}{r_{0c_{\ell_0}}} + 1 + \frac{|I|}{c_{\ell_0} - |I|} \\ &< \frac{-c_{\ell_0}}{c_{\ell_0} - |I|} + 1 + \frac{|I|}{c_{\ell_0} - |I|} = 0,
\end{align*} \noindent
which gives a contradiction. Hence by \eqref{eq:avgratio} we have that 
\[
r_{ic_{\ell_0}} = \frac{r_{0c_{\ell_0}}}{c_{\ell_0}- |I|}
\] 
for all $i\in I$ proving the theorem.

\end{proof}
\begin{corollary} \label{cor:univExtremeRays}
Let $n \in \mathbb{N}\setminus \{0\}$ and suppose $1 = c_1 < c_2 < \cdots <c_{d-1} < c_d = n$ are the divisors of $n$.
Let $M_{\ell} = \{ t \frac{n}{c_{\ell}} : 0 \leq t < c_{\ell} \}$ and
define $\rvec^{(\ell)} = (r_{ij}^{(\ell)}) \in \mathbb{R}^{n \times n}$ by
\[
r_{ij}^{(\ell)} = \begin{cases} 
1, & \text{ if } i \in M_{\ell} \text{ and } j = c_{\ell}, \\ 
 0, & \text{ otherwise, } \end{cases}
\]
for $1 \leq \ell \leq d$.
Then the extreme rays of $\widetilde{\CSP}(n)$ are given by $\{ \rvec^{(\ell)} : 1 \leq \ell \leq d \}$.
\end{corollary}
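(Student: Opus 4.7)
The plan is to read off the claim from \cref{def:univCSPMat} itself. A matrix $A = (a_{ij}) \in \CSP(n)$ is universal precisely when, for each divisor $c_\ell$ of $n$, its column $c_\ell$ takes the constant value $K_{c_\ell} \geq 0$ on the rows $i \in M_\ell$ and vanishes on all other rows; the columns indexed by $j \nmid n$ must vanish identically, as recorded in the proof of \cref{thm:nhypdesc}. Reading this pointwise one obtains the parametrisation
\[
A \;=\; \sum_{\ell=1}^d K_{c_\ell}\, \rvec^{(\ell)}, \qquad K_{c_1}, \ldots, K_{c_d} \geq 0.
\]
In particular each $\rvec^{(\ell)}$ is itself a universal CSP-matrix (take $K_{c_\ell} = 1$ and all other $K_{c_k} = 0$; alternatively, it is the CSP-matrix of a single cyclic orbit of size $c_\ell$ carrying a universal statistic in the sense of \cite{Ahlbach2017}), and $\widetilde{\CSP}(n)$ coincides with the conic hull of $\{\rvec^{(1)}, \ldots, \rvec^{(d)}\}$.

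It then remains only to upgrade this from ``generating set'' to ``set of extreme rays''. For this I would observe that the $\rvec^{(\ell)}$ have pairwise disjoint supports, being concentrated in the distinct columns $c_1 < c_2 < \cdots < c_d$, and are therefore linearly independent. Consequently $\widetilde{\CSP}(n)$ is a simplicial cone of dimension $d$, and its extreme rays are exactly the rays through its $d$ linearly independent generators. Equivalently, if $\rvec^{(\ell)} = A + B$ with $A, B \in \widetilde{\CSP}(n)$, then expanding $A$ and $B$ in the above parametrisation forces all coefficients other than those of $\rvec^{(\ell)}$ to vanish, so $A$ and $B$ are non-negative multiples of $\rvec^{(\ell)}$; this is the definition of extremality.

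There is no genuine obstacle here: once one recognises that \cref{def:univCSPMat} directly parametrises $\widetilde{\CSP}(n)$ by the orthant $\mathbb{R}_{\geq 0}^d$ via the map $(K_{c_1},\dots,K_{c_d}) \mapsto \sum_\ell K_{c_\ell}\rvec^{(\ell)}$, the corollary reduces to the standard fact that a simplicial cone has its generators as its extreme rays. The only small conceptual point to flag is that in contrast with \cref{thm:extremeRays}, which exhibits many non-universal extreme rays of $\CSP(n)$, restricting to $\widetilde{\CSP}(n)$ collapses the family $I \subseteq M_\ell$ down to the single choice $I = M_\ell \setminus\{0\}$ for each $\ell$, which is exactly $\rvec^{(\ell)}$.
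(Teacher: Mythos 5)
Your proof is correct, but it takes a genuinely different and in fact more self-contained route than the paper's. The paper's argument invokes \cref{thm:extremeRays} to certify that each $\rvec^{(\ell)}$ is an extreme ray of the ambient cone $\CSP(n)$ (specializing to $I = M_\ell \setminus \{0\}$, so that $\tfrac{1}{c_{\ell_0}-|I|} = 1$), then notes that extremality passes to the subcone and that the $\rvec^{(\ell)}$ generate $\widetilde{\CSP}(n)$ by definition, hence there can be no others. You instead bypass \cref{thm:extremeRays} entirely: you read off from \cref{def:univCSPMat} (together with the vanishing of columns $j\nmid n$ and the subcone remark guaranteeing each $\rvec^{(\ell)} \in \CSP(n)$) that $\widetilde{\CSP}(n)$ is the conic hull of $\{\rvec^{(1)}, \dots, \rvec^{(d)}\}$, observe that disjointness of column supports gives linear independence, and conclude by the standard fact that a simplicial cone has its generators as its extreme rays. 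The paper's approach is shorter on the page but leans on a much heavier antecedent result; yours is longer to state but elementary and makes the simplicial structure of $\widetilde{\CSP}(n)$ explicit, which is arguably a cleaner way to see why the universal subcone has so few extreme rays compared with $\CSP(n)$ itself.
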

\begin{proof}
By Theorem \ref{thm:extremeRays} the set $\{ \rvec^{(\ell)} : 1 \leq \ell \leq d \}$ are indeed extreme rays and they clearly generate all universal CSP matrices (cf. Definition \ref{def:univCSPMat}).
\end{proof}
\begin{corollary} \label{cor:prays}
Let $p \in \mathbb{N}$ be a prime number. Then the extreme rays of $\CSP(p)$ are given by $E_{01} \in \mathbb{R}^{p \times p}$ and $\rvec = (r_{ij}) \in \mathbb{R}^{p \times p}$ such that
\[
r_{ij} = \begin{cases} 
1, & \text{ if } (i,j) = (0,p), \\ 
\frac{1}{p-|I|}, & \text{ if } i \in  I \text{ and } j = p, \\ 0, & \text{ otherwise, } \end{cases}
\]
where $I \subseteq \{1, \dots, p-1 \}$. 
In particular the number of extreme rays of $\CSP(p)$ is given by $2^{p-1}+1$.
\end{corollary}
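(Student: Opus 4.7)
The plan is to combine \cref{thm:extremeRays}, which already certifies the listed matrices as extreme rays, with a matching upper bound obtained directly from the half-space description in \cref{thm:nhypdesc}. Since $p$ is prime its only divisors are $1$ and $p$, so $d = 2$ and every CSP matrix is supported in columns $1$ and $p$. \cref{thm:nhypdesc} then realizes $\CSP(p)$ as the $(p+1)$-dimensional polyhedral cone in coordinates $x_{01}$ and $y_i \coloneqq a_{ip}$ for $0 \leq i \leq p-1$, cut out by $x_{01} \geq 0$, $y_i \geq 0$, and $H_k(\xvec) = S - p y_k \geq 0$ for $k = 1,\dots, p-1$, where $S = y_0 + \cdots + y_{p-1}$.

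Each extreme ray is pinned down by $p$ linearly independent active constraints. I would first handle the case $x_{01} > 0$: then $x_{01}$ spans the free direction and all other inequalities must be tight, forcing $y_i = 0$ and producing the ray $E_{01}$. In the case $x_{01} = 0$, let $Z \subseteq \{0,\dots,p-1\}$ and $E \subseteq \{1,\dots,p-1\}$ index the remaining active constraints $y_i = 0$ and $H_k = 0$ respectively. The observation that $k \in Z \cap E$ forces $S = 0$ and hence the zero vector shows $Z \cap E = \emptyset$. The key step is to rule out $0 \in Z$: a bookkeeping argument using $|Z| + |E| = p - 1$ together with the identity $S = p y_{k^*} / (p - |E|)$ for the unique remaining free coordinate $y_{k^*}$ shows that $H_{k^*} \geq 0$ then forces $|E| \geq p-1$, which is incompatible with $|Z| \geq 1$ unless the system collapses to the zero vector.

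Hence on any nonzero extreme ray with $x_{01} = 0$ we must have $y_0 > 0$, so the $p - 1$ remaining active constraints partition $\{1,\dots,p-1\}$ into $I \coloneqq E$ and its complement $Z \setminus \{0\}$. Solving $p y_k = S$ on $I$ and $y_k = 0$ off $I \cup \{0\}$, and normalizing $y_0 = 1$, yields exactly $y_k = 1/(p - |I|)$ for $k \in I$, matching the ray in the statement. Assembling the two cases gives $1 + 2^{p-1}$ extreme rays, which coincides with the lower bound from \cref{thm:extremeRays} and completes the enumeration. The main obstacle is the careful case analysis ruling out $y_0 = 0$; the remaining arithmetic is routine.
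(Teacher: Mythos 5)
Your proof is correct, and it supplies reasoning that the paper itself omits: the corollary is stated without proof, and \cref{thm:extremeRays} only classifies extreme rays already assumed to be supported in a single column of coordinates. Your reduction to the half-space description of \cref{thm:nhypdesc} in coordinates $(x_{01}, y_0,\dots,y_{p-1})$ with constraints $x_{01}\geq 0$, $y_i\geq 0$, $H_k = S - p y_k \geq 0$ is exactly right for $p$ prime, and the dichotomy $x_{01}>0$ versus $x_{01}=0$ is precisely the step the paper leaves implicit. The argument that $x_{01}>0$ forces $y=0$ is sound (all remaining constraints are functionals of $y$ alone, so $p$ independent tight ones annihilate $\mathbb{R}^p$), as is the bookkeeping ruling out $0\in Z$ via $S = p y_{k^*}/(p-|E|)$ and $H_{k^*}\geq 0$ forcing $|E|\geq p-1$, incompatible with $|Z|\geq 1$. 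Two small points you should spell out to make the write-up airtight: (i) the claimed equality $|Z|+|E|=p-1$ follows from disjointness of $Z$ and $E$ (since $k\in Z\cap E$ forces $S=0$), the fact that $|Z|+|E|=p$ would force $Z\cup E=\{0,\dots,p-1\}$ and hence $S=0$, and the requirement of $p$ linearly independent active constraints alongside $x_{01}=0$; and (ii) the linear independence of $\{x_{01}=0\}\cup\{y_i=0\}_{i\in Z}\cup\{H_k=0\}_{k\in E}$, which is routine because some index lies outside $Z\cup E$, forcing all coefficients in any dependence to vanish. With those details filled in, your enumeration gives exactly $1+2^{p-1}$ extreme rays, matching the lower bound from \cref{thm:extremeRays} and establishing the corollary.
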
 \noindent
By adding a size restriction on the set $X$ we can also talk about a natural family of polytopes associated with cyclic sieving phenomena.
\begin{definition}
Let $m \in \mathbb{N}$. The $m^{\text{th}}$ \emph{CSP-polytope} is the convex rational polytope defined by
\[
\CSP(n,m) \coloneqq \left \{ A \in \CSP(n) : ||A|| = m  \right \}.
\]
Let $\CSP_{\mathbb{Z}}(n,m) \coloneqq \CSP(n,m) \cap \mathbb{Z}^{n \times n}$ denote the set of integer lattice points in $\CSP(n,m)$.
\end{definition} \noindent
Once again, in the case where $n=p$ for some prime number $p \in \mathbb{N}$ we are able to make explicit computations. In the following two propositions we compute the vertices and the number of integer lattice points of $\CSP(n,m)$.
\begin{proposition}
Let $p \in \mathbb{N}$ be a prime number and $m \in \mathbb{N}$. 
Then the vertices of $\CSP(p,m)$ are given by $mE_{01} \in \mathbb{R}^{p \times p}$
and $\vvec = (v_{ij}) \in \mathbb{R}^{p \times p}$ such that
\[
v_{ij} = \begin{cases} 
C, & \text{ if } (i,j) = (0,p), \\ 
\frac{C}{p-|I|}, & \text{ if } i \in  I \text{ and } j = p, \\ 0, & \text{ otherwise, } \end{cases}
\]
where $I \subseteq \{2, \dots, p \}$ and 
\[
C = \frac{m}{2p-1+ \frac{(p-1)|I|}{p- |I|}}.
\]
In particular the number of vertices of $\CSP(p,m)$ is given by $2^{p-1}+1$.
\end{proposition}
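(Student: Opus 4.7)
The plan is to realize $\CSP(p,m)$ as the transverse intersection of the rational polyhedral cone $\CSP(p)$ with the affine hyperplane $H_m = \{A \in \mathbb{R}^{p \times p} : \|A\| = m\}$, and then read off the vertices from the extreme rays of $\CSP(p)$ enumerated in Corollary~\ref{cor:prays}.

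First I would observe that $A \mapsto \|A\| = \sum_{i,j} a_{ij}$ is a linear functional that is strictly positive on $\CSP(p) \setminus \{0\}$: this is immediate since $\CSP(p) \subseteq \mathbb{R}_{\geq 0}^{p \times p}$ and the zero matrix is the only CSP matrix with vanishing entry-sum. Consequently $H_m$ is transverse to every extreme ray of $\CSP(p)$, meeting each such ray in a single point. A standard argument in polyhedral geometry then identifies the vertices of the bounded polytope $\CSP(p,m) = \CSP(p) \cap H_m$ with these intersection points, producing one vertex for each extreme ray of $\CSP(p)$.

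Next I would apply Corollary~\ref{cor:prays}, which enumerates exactly $2^{p-1}+1$ extreme rays of $\CSP(p)$: the ray through $E_{01}$ together with one ray $\rvec_I$ for each subset $I$ of a fixed $(p-1)$-element set of row indices. Each vertex of $\CSP(p,m)$ is then obtained by scaling the corresponding extreme ray by the unique positive factor that forces the entry-sum to equal $m$. Since $\|E_{01}\|=1$, this yields the vertex $mE_{01}$. For each $\rvec_I$ the required factor is $C = m/\|\rvec_I\|$, and a direct summation of the entries of $\rvec_I$ should simplify to the stated closed form $C = m / \bigl(2p-1 + (p-1)|I|/(p-|I|)\bigr)$. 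The total count $2^{p-1}+1$ of vertices then follows immediately from the bijection with extreme rays.

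The main bookkeeping step is the norm computation $\|\rvec_I\|$, where one must combine the entries in column $p$ given by Corollary~\ref{cor:prays} with any additional entries forced by the CSP conditions \eqref{eq:CSPCond} via the functionals $H_k$ from Theorem~\ref{thm:nhypdesc}. Once these contributions are properly collected the stated expression for $C$ emerges by routine algebraic simplification, completing the proof.
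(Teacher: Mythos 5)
Your plan is mathematically sound and takes a genuinely different route than the paper: the paper analyzes vertices directly by counting active inequalities (as in its proof of Theorem~\ref{thm:extremeRays}), while you propose to slice the pointed cone $\CSP(p)$ with the affine hyperplane $\{\|A\|=m\}$ and read off vertices as scalings of the extreme rays from Corollary~\ref{cor:prays}. The standard fact you invoke — that if $\ell$ is a linear functional strictly positive on a pointed cone $K\setminus\{0\}$ then $K\cap\{\ell=m\}$ is a polytope whose vertices are the unique points with $\ell=m$ on each extreme ray — is correct and gives a clean reduction.

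However, the step you call ``routine algebraic simplification'' is in fact the crux, and as written it would give the wrong constant. If you sum only the column-$p$ entries of $\rvec_I$ as displayed in Corollary~\ref{cor:prays}, you get $\|\rvec_I\| = 1 + \tfrac{|I|}{p-|I|} = \tfrac{p}{p-|I|}$, hence $C = \tfrac{m(p-|I|)}{p}$, which disagrees with the stated $C = m\bigl/\bigl(2p-1 + \tfrac{(p-1)|I|}{p-|I|}\bigr)$. The discrepancy is exactly the contribution of the first column: the actual CSP matrix underlying $\rvec_I$ has entries $a_{k1}=H_k(\xvec)$, which for the ray with support $I$ works out to $H_k = \tfrac{1}{p-|I|}>0$ for $k\in\{1,\dotsc,p-1\}\setminus I$ (and $H_k=0$ for $k\in I$). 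These nonzero first-column entries must be included in $\|\rvec_I\|$; once they are, the total entry-sum becomes $2p-1+\tfrac{(p-1)|I|}{p-|I|}$ and the paper's $C$ falls out. (In other words, the ``$0$ otherwise'' clause in Corollary~\ref{cor:prays} describes only the column-$p$ support of the ray in the free-variable parametrization, and reading it as ``the whole matrix vanishes outside column $p$'' is a trap.) You anticipate this issue by mentioning that the functionals $H_k$ may force additional entries, but you leave the computation undone; carrying it out is the entire remaining content of the proof, and as the proposal stands it does not establish the stated value of $C$.
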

\begin{proof}
Suppose $\vvec = (v_{ij}) \in \mathbb{R}^{p \times p}$ is a vertex of $\CSP(p,m)$.

If $v_{0p} = 0$, then arguing as in the first part of the proof of Theorem \ref{thm:extremeRays} gives that $v_{0p} = v_{1p} = \cdots = v_{p-1p} =0$. Therefore $v_{ij} = 0$, unless $j = 1$ by Lemma \ref{lem:aprioriprops}. 
The additional constraint $||\vvec|| = m$ thus gives
\begin{equation} 
m = \sum_{\substack{0\leq i < p\\ 1 \leq j \leq p}} v_{ij} = \sum_{j=1}^p v_{0j} =  x_{01} + \sum_{k=1}^{p-1} H_k(\xvec),
\end{equation} \noindent
which is the same as
\begin{equation} \label{eq:ptopeconstr}
x_{01} + (2p-1)x_{0p} + (p-1)x_{1p} + \cdots + (p-1)x_{p-1p} = m.
\end{equation} \noindent
Since $x_{ip} = v_{ip}$ for $i = 0,1 \dots,p-1$ we get that $v_{01} = x_{01} = m$, so that $\vvec = mE_{01}$.

Therefore suppose $v_{0p} > 0$. Moreover suppose $I \subseteq \{ 1, \dots, p-1 \}$ 
such that $v_{ip} > 0$ for $i \in I$ and $v_{ip} = 0$ for $i \in \{1, \dots, p-1 \} \setminus I$. 
Since $\vvec$ is a vertex, there are by definition $p+1$ linearly independent 
constraints active at $\vvec$. Since $p$ of these constraints arise from the 
polyhedral description of $\CSP(p)$ in \cref{thm:nhypdesc} it follows, as in \cref{thm:prays}, that
\[
v_{ip} = \begin{cases} C, & \text{ if } (i,j) = (0,p),\\
\frac{C}{p-|I|}, & \text{ if } i \in I, \\ 
0, & \text{ if } i \in \{1, \dots, p-1 \} \setminus I,
\end{cases}
\]
for some $C > 0$.
The remaining active constraint is \cref{eq:ptopeconstr}. 
Inserting the above into \cref{eq:ptopeconstr} and solving for $C$ yields
\[
C = \frac{m}{2p-1+ \frac{(p-1)|I|}{p- |I|}},
\] 
from which the proposition follows.
\end{proof}
\begin{proposition}
Let $p,m \in \mathbb{N}$ where $p$ is a prime number. The number of lattice points in $\CSP(p,m)$ is given by
\[
|\CSP_{\mathbb{Z}}(p,m)| = \sum_{j=0}^m \sum_{r \in \left [ \frac{2j}{2p-1}, \frac{j}{p-1} \right ] \cap \mathbb{Z}} C \left (r(2p-1) - 2j, p-1, \lfloor r - j/p \rfloor \right),
\]
where 
\[
C(n,k,w) = \sum_{j=0}^k (-1)^j \binom{k}{j} \binom{n - jw-1}{k-1}.
\]
\end{proposition}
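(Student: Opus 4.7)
The strategy is to apply \cref{thm:nhypdesc} with $n = p$ prime --- where the divisor set collapses to $\{1, p\}$ --- and then count integer matrices satisfying the resulting constraints via inclusion--exclusion for bounded compositions.

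First I would specialize \cref{thm:nhypdesc} to the prime case. Only columns $1$ and $p$ can carry non-zero entries. Setting $y_i := a_{ip}$ for $i = 0, \dots, p-1$, the CSP condition forces $p \mid y_0 + \cdots + y_{p-1}$. Writing $r := \tfrac{1}{p}\sum_{i=0}^{p-1} y_i$, the column-$1$ entries become $a_{01}$ at row $0$ and $H_k = r - y_k$ at row $k$ for $1 \le k \le p-1$, so the further constraints are $0 \le y_k \le r$ for $k \ge 1$. The normalization $\|A\| = m$ becomes
\[
a_{01} + y_0 + (p-1)r = m,
\]
which determines $a_{01}$ from $(r, y_0)$. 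The bounds $y_0 \ge r$ (forced by $y_1 + \cdots + y_{p-1} \le (p-1)r$), $y_0 \le pr$ (from $y_1 + \cdots + y_{p-1} \ge 0$), and $a_{01} \ge 0$ (equivalent to $y_0 + (p-1)r \le m$) are the complete set of inequalities.

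Next I would reorganize the count as a double sum parameterized by an outer integer $j \in \{0, \dots, m\}$ (chosen as an appropriate linear combination of $y_0$ and $r$, naturally $j = y_0 + (p-1)r = m - a_{01}$) and an inner integer $r$. With $(j, r)$ fixed, $y_0$ is forced, and the remaining task is to enumerate $(y_1, \dots, y_{p-1}) \in \{0, \dots, r\}^{p-1}$ summing to a linear expression in $(j, r)$. This is precisely the bounded-composition count produced by the inclusion--exclusion formula defining $C(n, k, w)$ in the statement, once one passes between weak and positive compositions via the standard shift $y_k \mapsto y_k + 1$. Translating the constraints $y_0 \in [r, pr]$ into conditions on $r$ yields an interval of the form $\bigl[\tfrac{2j}{2p-1}, \tfrac{j}{p-1}\bigr]$.

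The main technical obstacle is the bookkeeping required to match the natural output of the inclusion--exclusion to the precise form $C\bigl(r(2p-1) - 2j,\, p-1,\, \lfloor r - j/p \rfloor\bigr)$: choosing the substitution so that the inner sum's three arguments collapse exactly to the advertised expressions is a fiddly, but elementary, affine reindexing --- in particular, the appearance of the floor $\lfloor r - j/p \rfloor$ reflects the divisibility $p \mid \sum y_i$ which is responsible for the $\bmod\ p$ truncation of the effective per-part upper bound. Once this substitution is in place, the remainder of the proof is a direct invocation of the formula for $C(n,k,w)$, together with the verification that the summation ranges exhaust all valid $(r, y_0)$ pairs without overcounting.
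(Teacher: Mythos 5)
Your setup is essentially correct and, in fact, cleaner than the paper's: you work directly with $y_i = a_{ip}$ rather than with the auxiliary parameters $x_{ip}$ of \cref{thm:nhypdesc}, you correctly identify $p \mid \sum_i y_i$, you correctly compute $a_{k1} = r - y_k$ for $k \ge 1$, and you correctly obtain $a_{01} + y_0 + (p-1)r = m$ with $y_0 \in [r, pr]$ the feasibility window for the inner bounded-composition count. The paper takes the same route (outer parameter $j = m - a_{01}$, inner parameter $r$, count bounded compositions), so the overall strategy is the same.

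The gap is in the very last step, which you dismiss as a ``fiddly, but elementary, affine reindexing.'' Carrying it out from your parametrization does \emph{not} give the stated formula. With $j = y_0 + (p-1)r$ one has $y_0 = j - (p-1)r$, so $y_0 \ge r$ gives $r \le j/p$ and $y_0 \le pr$ gives $r \ge j/(2p-1)$: the interval is $\bigl[\tfrac{j}{2p-1}, \tfrac{j}{p}\bigr]$, \emph{not} $\bigl[\tfrac{2j}{2p-1}, \tfrac{j}{p-1}\bigr]$ as you assert. Likewise, the residual sum $\sum_{k\ge 1} y_k$ equals $(2p-1)r - j$, and the per-part cap is exactly $r$ (an integer, since $p \mid \sum y_i$, so no floor appears); neither the first argument $(2p-1)r - 2j$ nor the bound $\lfloor r - j/p\rfloor$ in the target formula arises from your variables. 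Your remark that the floor ``reflects the divisibility $p \mid \sum y_i$'' is therefore backwards: in your coordinates that divisibility makes $r$ an integer and \emph{eliminates} the floor. The discrepancy is not a cosmetic relabeling --- the paper's inner count is phrased in the $x_{ip}$-coordinates of \cref{thm:nhypdesc}, which satisfy $a_{ip} = p\, x_{ip}$, and the change of variables between the paper's $r$ and yours sends integers to non-integers in general. So either you need to exhibit the explicit reindexing that turns your (correct-looking) formula into the stated one, or you need to argue that the two expressions agree term-by-term after accounting for vanishing $C(\cdot)$ values; as written, the proposal asserts the match without demonstrating it, and the arithmetic you do supply points the other way.
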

\begin{proof}
Let $x = x_{0p}$, $y = x_{1p} + \cdots + x_{p-1p}$ and $z = x_{01}$. 
According to the constraint $||A|| = m$ we seek non-negative integer solutions to
\[
(2p-1)x + (p-1)y + z = m, 
\]
(cf. Equation \eqref{eq:ptopeconstr}) satisfying $H_k(\xvec) \geq 0$. We therefore consider the Diophantine equations
\[
(2p-1)x + (p-1)y = j,
\]
for $j = 0, \dots, m$ which have the non-negative integer solutions
\begin{align*}
x = j - r(p-1) \text{ and } y = -2j + r(2p-1), 
\end{align*} \noindent
for $r \in \left [ \frac{2j}{2p-1}, \frac{j}{p-1} \right ] \cap \mathbb{Z}$. 
The constraints $H_k(\xvec) \geq 0$ for $k = 1, \dots, p-1$ give
\[
x - (p-1)x_{kp} + (y-x_{kp}) \geq 0,
\]
which implies
\[
x_{kp} \leq r - \frac{j}{p}
\]
for $k = 1, \dots, p-1$. Hence the lattice points in $\CSP(p,m)$ are 
in one-to-one correspondence with weak compositions 
of $y = -2j + r(2p-1)$ into $p-1$ parts of size at most $\lfloor r - j/p \rfloor$. 
By \cite{Abramson1976} the number of such compositions are given by $C(r(2p-1)-2j,p-1,\lfloor r-j/p \rfloor)$.   
\end{proof}

\section{Appendix}   \label{sec:appendix}
In this appendix we prove inequalities needed for the estimations in Theorem \ref{thm:rectSchur}. The first inequality below gives a sufficient condition for a Riemann sum to be monotonically increasing. A slightly weaker result appears in \cite[Theorem 3A]{Bennet2000}.

\begin{proposition}\label{eq:functionInequality}
Let $f(x)$ be a \emph{decreasing} convex\footnote{For all $a$, $b$ we have $f(\frac{a+b}{2})\leq\frac{f(a)+f(b)}{2}$,
or equivalently for twice differentiable functions, $f''\geq 0$.} 
function on $\mathbb{R}_{\geq 0}$, let $p$ be a positive integer and $r \geq 0$.
Then
\begin{align}\label{eq:samplingInequality}
 \frac{1}{p} \sum_{\ell=1}^{p} f\left(\frac{\ell+r}{p} \right) \leq \frac{1}{p+1} \sum_{\ell=1}^{p+1} f\left( \frac{\ell+r}{p+1} \right).
\end{align}
\end{proposition}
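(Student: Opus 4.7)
The plan is to bound each $f(u_\ell)$, with $u_\ell := (\ell+r)/p$, by a convex combination of values $f(v_k)$, where $v_k := (k+r)/(p+1)$, and then compare the resulting weighted sum to $\tfrac{p}{p+1}\sum_{k=1}^{p+1} f(v_k)$ via Abel summation. Setting $q_\ell := \lfloor(\ell+r)/p\rfloor$, $k_\ell := \ell + q_\ell$, and $f_\ell := (\ell+r)/p - q_\ell \in [0,1)$, a direct calculation shows that $k_\ell \leq p$ precisely when $u_\ell \leq v_{p+1}$, and in that case one has the identity
\[
u_\ell \;=\; (1-f_\ell)\, v_{k_\ell} + f_\ell\, v_{k_\ell+1}.
\]
Convexity of $f$ then yields $f(u_\ell) \leq (1-f_\ell) f(v_{k_\ell}) + f_\ell\, f(v_{k_\ell+1})$. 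For the remaining indices (those with $u_\ell > v_{p+1}$), the decreasingness hypothesis gives $f(u_\ell) \leq f(v_{p+1})$. Summing these bounds produces non-negative coefficients $c_k$ with $\sum_{\ell=1}^p f(u_\ell) \leq \sum_{k=1}^{p+1} c_k f(v_k)$ and $\sum_k c_k = p$, since each $\ell$ contributes total weight~$1$.

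The proposition is thus equivalent to $\sum_k \bigl(c_k - \tfrac{p}{p+1}\bigr) f(v_k) \leq 0$, a sum whose coefficient sequence totals zero. Since $f(v_k)$ is weakly decreasing in $k$, Abel summation reduces this to the partial-sum inequality
\[
\sum_{j=1}^{k_0} c_j \;\leq\; \frac{k_0\, p}{p+1}, \qquad 1 \leq k_0 \leq p+1.
\]
The endpoint case $k_0 = p+1$ holds with equality. For $k_0 \leq p$, the map $\ell \mapsto k_\ell$ is strictly increasing (by $1$ or $2$ at each step), so at most one index $\ell^* = \ell^*(k_0)$ satisfies $k_{\ell^*} = k_0$. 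A short calculation using the definitions yields $\sum_{j \leq k_0} c_j = (pk_0 - r - f_{\ell^*})/(p+1)$, which is at most $pk_0/(p+1)$ because $r \geq 0$ and $f_{\ell^*} \geq 0$. If no such $\ell^*$ exists (so $k_0$ is skipped by the index map), a simpler floor-function estimate gives the same bound.

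The main obstacle is exactly this combinatorial partial-sum estimate: one must carefully track how $k_\ell$ progresses through $\{1,\dots,p+1\}$ as $\ell$ varies over $\{1,\dots,p\}$, and verify that the mass transported into $\{v_1,\dots,v_{k_0}\}$ never exceeds the target $\tfrac{k_0 p}{p+1}$. Once this is established, convexity (used pointwise when $u_\ell \leq v_{p+1}$) and monotonicity (handling the tail where $u_\ell > v_{p+1}$) combine via Abel summation to conclude inequality \eqref{eq:samplingInequality}.
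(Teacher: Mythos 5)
Your proof is correct, but it takes a genuinely different route from the paper's. The paper's proof rests on the single explicit identity $x_i = (1-\tfrac{i}{p})y_i + \tfrac{i}{p}y_{i+1} + \tfrac{r}{p(p+1)}$, where $x_i=(i+r)/p$ and $y_i=(i+r)/(p+1)$; the weight $i/p$ lies in $[0,1]$ for every $i\in\{1,\dots,p\}$, so this is always a convex combination of $y_i$ and $y_{i+1}$ plus a nonnegative shift. Decreasingness absorbs the shift, convexity splits the combination, decreasingness is invoked once more to change the weight $i/p$ to $i/(p+1)$, and the sum then telescopes directly. Your approach is instead a nearest-neighbor interpolation: you locate each $u_\ell$ inside $[v_{k_\ell},v_{k_\ell+1}]$ via the floor-based index $k_\ell=\ell+\lfloor u_\ell\rfloor$, obtain an exact convex combination with no shift, aggregate the weights $c_k$, and reduce the desired inequality by Abel summation to the family of partial-sum bounds $\sum_{j\leq k_0}c_j\leq pk_0/(p+1)$. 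Both proofs use $r\geq 0$ and the same two hypotheses on $f$; the paper avoids all floor-function and tail-case bookkeeping because its interpolation is linear in $i$ rather than nearest-neighbor, which makes the argument shorter. Your combinatorial partial-sum claim does check out (substituting $\ell^*=k_0-q_{\ell^*}$ and $pf_{\ell^*}=\ell^*+r-pq_{\ell^*}$ verifies the formula $\sum_{j\leq k_0}c_j=(pk_0-r-f_{\ell^*})/(p+1)$, and the floor estimate $\ell'<pk_0/(p+1)$ for skipped $k_0$ follows from $\lfloor x\rfloor>x-1$), but these steps are merely asserted in your write-up and would need to be spelled out for a complete proof; as it stands it is considerably longer than the paper's two-line identity plus telescoping.
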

\begin{proof}
Let $x_i \coloneqq (i+r)/p$ and $y_i \coloneqq (i+r)/(p+1)$ and note that
\begin{align}\label{eq:weightedAvg}
 x_i  = \left(1-\frac{i}{p}\right) y_i + \frac{i}{p}y_{i+1} + \frac{r}{p(p+1)}.
\end{align}
Since $f$ is decreasing and convex, we have that
\[
 f(x_i) \leq f\left[  \left(1-\frac{i}{p}\right) y_i + \frac{i}{p}y_{i+1} \right] \leq 
   \left(1-\frac{i}{p}\right) f(y_i) + \frac{i}{p}f(y_{i+1})
\]
Now let $a_i \coloneqq f(x_i)$ and $b_i \coloneqq f(y_i)$
and note that the decreasing property implies 
\[
a_i\leq \left(1-\frac{i}{p}\right)b_i + \frac{i}{p} b_{i+1} \leq \left(1-\frac{i}{p+1}\right)b_i + \frac{i}{p+1} b_{i+1}
\text{ for $i=1,\dotsc,p$.}
\]
We add all these inequalities and obtain
\begin{align*}
\sum_{i=1}^{p} a_i  \leq  \frac{1}{p+1} \sum_{i=1}^{p} (p+1 - i) b_i + \frac{1}{p+1}\sum_{i=1}^{p} i b_{i+1}.
\end{align*}
We then have
\begin{align*}
(p+1) (a_1+\dotsb+ a_p)  &\leq  \sum_{i=1}^{p} (p+1 - i) b_i + \sum_{i=2}^{p+1} (i-1) b_{i} \\
         &\leq p \sum_{i=1}^{p}b_i + \sum_{i=1}^{p} (1 - i) b_i + p b_{p+1} + \sum_{i=2}^{p} (i-1) b_{i} \\
         &\leq p (b_1 + \dotsb + b_{p+1}).
\end{align*}
This implies \eqref{eq:samplingInequality}.
\end{proof}

\begin{corollary}
Let $r,s \geq 0$ and $p \in \setN$. Then the expression
\begin{align}\label{eq:basicIncreasing}
g(p) = \frac{1}{p}\sum_{\ell=1}^{p}  \frac{1}{s+(r+\ell)/p}
\end{align}
is increasing with $p$.
\end{corollary}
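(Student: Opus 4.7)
The plan is to apply \cref{eq:functionInequality} directly to the function $f(x) = \frac{1}{s+x}$ and identify the two sides of \eqref{eq:samplingInequality} as $g(p)$ and $g(p+1)$ respectively.

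First I would verify the hypotheses of \cref{eq:functionInequality} for the chosen $f$. On $\mathbb{R}_{\geq 0}$ we have $f'(x) = -(s+x)^{-2} < 0$, so $f$ is decreasing, and $f''(x) = 2(s+x)^{-3} > 0$, so $f$ is convex. Since $s \geq 0$, $f$ is defined and smooth on all of $\mathbb{R}_{\geq 0}$, so the proposition applies.

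Next, I would observe that with this choice of $f$,
\[
\frac{1}{p}\sum_{\ell=1}^{p} f\!\left(\frac{\ell+r}{p}\right) = \frac{1}{p}\sum_{\ell=1}^{p} \frac{1}{s+(r+\ell)/p} = g(p),
\]
and similarly the right-hand side of \eqref{eq:samplingInequality} equals $g(p+1)$. Hence \cref{eq:functionInequality} gives $g(p) \leq g(p+1)$ for every positive integer $p$, which is the claimed monotonicity.

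There is no substantial obstacle here: the corollary is essentially a repackaging of \cref{eq:functionInequality} with a specific choice of sampling function. The only check worth flagging is that the parameter $r$ in the corollary matches the role of $r$ in the proposition (a non-negative shift in the numerator), which it does verbatim.
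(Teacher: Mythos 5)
Your proof is correct and takes exactly the same approach as the paper: apply \cref{eq:functionInequality} with $f(x) = 1/(s+x)$ and the given $r$. The only difference is that you explicitly verify that $f$ is decreasing and convex, which the paper takes as evident.
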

\begin{proof}
Choosing the decreasing convex function $f(x) = 1/(s+x)$ in \cref{eq:functionInequality} 
together with the given $r$ yields
\begin{align*}
 \frac{1}{p}\sum_{\ell=1}^{p}  \frac{1}{s+(r+\ell)/p}\leq \frac{1}{p+1}\sum_{\ell=1}^{p+1}  \frac{1}{s+(r+\ell)/(p+1)}.
\end{align*}
\end{proof}

\begin{corollary} \label{cor:harmonicInequality}
If $a$, $t$, $i$, $j$ and $k$ are non-negative integers such that $a\leq t$ and $j\leq k$,
then
\begin{align}\label{eq:harmonicInequality}
 \sum_{\ell=0}^{ka-1}  \frac{1}{kt+i-\ell} \geq  \sum_{\ell=0}^{ja-1} \frac{1}{jt+i-\ell}.
\end{align}
\end{corollary}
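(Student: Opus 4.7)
\medskip

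\noindent\textbf{Proof plan.} The strategy is to reindex each side as a sum of $a$ blocks of Riemann-sum type, each of which has been handled by the previous corollary about the monotonicity of
\[
g(p)=\frac{1}{p}\sum_{\ell=1}^{p}\frac{1}{s+(r+\ell)/p}.
\]

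First I would substitute $m=ka-\ell$ on the left and $m=ja-\ell$ on the right to rewrite
\[
\sum_{\ell=0}^{ka-1}\frac{1}{kt+i-\ell}
=\sum_{m=1}^{ka}\frac{1}{k(t-a)+i+m},
\qquad
\sum_{\ell=0}^{ja-1}\frac{1}{jt+i-\ell}
=\sum_{m=1}^{ja}\frac{1}{j(t-a)+i+m}.
\]
This shift is harmless because $a\leq t$ ensures $t-a\geq 0$, so every denominator that appears below is positive.

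Next I would decompose each sum into $a$ consecutive blocks of length $k$ (resp.\ $j$): writing $m=nk+\ell$ with $0\leq n\leq a-1$ and $1\leq\ell\leq k$, the left-hand side becomes
\[
\sum_{m=1}^{ka}\frac{1}{k(t-a)+i+m}
=\sum_{n=0}^{a-1}\frac{1}{k}\sum_{\ell=1}^{k}\frac{1}{(t-a+n)+(i+\ell)/k},
\]
and analogously for the right-hand side with $k$ replaced by $j$. Each inner block is exactly $g(k)$ (resp.\ $g(j)$) evaluated at parameters $s_n=t-a+n\geq 0$ and $r=i\geq 0$, which are non-negative thanks to the hypotheses $a\leq t$ and $i\geq 0$.

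Finally, since $j\leq k$, the corollary following \cref{eq:functionInequality} gives the blockwise inequality
\[
\frac{1}{k}\sum_{\ell=1}^{k}\frac{1}{s_n+(i+\ell)/k}
\;\geq\;
\frac{1}{j}\sum_{\ell=1}^{j}\frac{1}{s_n+(i+\ell)/j}
\quad\text{for each }n=0,1,\dotsc,a-1.
\]
Summing these $a$ inequalities and reversing the earlier substitutions delivers \eqref{eq:harmonicInequality}. There is no real obstacle here beyond the change-of-variable bookkeeping; the content of the estimate has already been absorbed into the convex-function monotonicity lemma.
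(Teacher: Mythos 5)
Your proof is correct and rests on exactly the same engine as the paper's: the monotonicity in $p$ of $g(p)=\frac{1}{p}\sum_{\ell=1}^{p}\frac{1}{s+(r+\ell)/p}$, which comes from \cref{eq:functionInequality}. The difference is only in how that corollary is invoked. The paper applies it once, with $p=ak$ versus $p=aj$, choosing $s=(t-a)/a$ and $r=i$, so the entire left-hand side is a single Riemann-type sum; that choice involves a division by $a$, so one should treat the trivial case $a=0$ (empty sums) separately. You instead split the reindexed sum into $a$ consecutive blocks of length $k$ (resp.\ $j$) and apply the corollary $a$ times with $p=k$ versus $p=j$ and integer offsets $s_n=t-a+n$, $r=i$; this avoids the division by $a$, handles $a=0$ as an empty sum of blocks, and makes each use of the monotonicity lemma elementary. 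Both arguments are equally rigorous; yours trades one "large" application for $a$ "small" ones and is, if anything, slightly cleaner on the edge cases. (Both you and the paper still implicitly need $j\geq 1$ for $g(j)$ to make sense, but $j=0$ is trivial since the right-hand side is then empty.)
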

\begin{proof}
Choosing $p=ak$, $s = (t-a)/a$ and $r = i$  in \eqref{eq:basicIncreasing} gives that
\begin{align*}
f(ak) = \frac{1}{ka} \sum_{\ell=1}^{ak} \frac{1}{s + (i+\ell)/(ak)}
  = \sum_{\ell=1}^{ka} \frac{1}{kt-ka + i +\ell} 
  &= \sum_{\ell=0}^{ka-1} \frac{1}{kt + i -\ell}.
\end{align*}
The fact that $f(ka) \geq f(kj)$ if $k\geq j$ now gives the desired inequality.
\end{proof}
\begin{lemma} \label{lem:binomineq}
If $a$, $b$, $i$, $j$ and $k$ are non-negative integers such that $a\leq b$ and $j\leq k$,
then
\begin{align}\label{eq:binomineq}
\frac{\binom{kb+i}{ka}^{1/k}}{\binom{ka  + i}{ ka }^{1/k}} \geq  \frac{\binom{jb+i}{ ja }^{1/j}}{\binom{ ja + i}{ ja }^{1/j}}. 
\end{align}
\end{lemma}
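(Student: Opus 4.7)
The plan is to express the logarithm of each side as an integral and then apply Corollary \ref{cor:harmonicInequality} under the integral sign.

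First I will observe that, after cancelling $(ka)!$ from numerator and denominator,
\[
R(k) \coloneqq \frac{\binom{kb+i}{ka}}{\binom{ka+i}{ka}} = \prod_{\ell=0}^{ka-1}\frac{kb+i-\ell}{ka+i-\ell},
\]
so \eqref{eq:binomineq} is equivalent to the statement that $k\mapsto \tfrac{1}{k}\log R(k)$ is weakly increasing on positive integers. Using $\log\frac{p}{q} = \int_q^p \frac{dt}{t}$ and substituting $t = ks+i-\ell$ converts each factor into
\[
\log\frac{kb+i-\ell}{ka+i-\ell} = k\int_a^b \frac{ds}{ks+i-\ell},
\]
and summing over $\ell$ yields the clean identity
\[
\frac{1}{k}\log R(k) = \int_a^b \sum_{\ell=0}^{ka-1}\frac{1}{ks+i-\ell}\,ds.
\]

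The crucial step is to apply Corollary \ref{cor:harmonicInequality} pointwise to the integrand, with the role of the parameter $t$ played by the integration variable $s$: for every $s\in[a,b]$ one has $a\le s$ and $j\le k$, so
\[
\sum_{\ell=0}^{ka-1}\frac{1}{ks+i-\ell} \;\ge\; \sum_{\ell=0}^{ja-1}\frac{1}{js+i-\ell}.
\]
Integrating this pointwise inequality over $s\in[a,b]$ produces $\tfrac{1}{k}\log R(k) \ge \tfrac{1}{j}\log R(j)$, and exponentiating gives \eqref{eq:binomineq}.

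The only subtlety I anticipate is that Corollary \ref{cor:harmonicInequality} is stated for integer $t$, whereas my argument requires it for real $t\in[a,b]$. Inspecting its proof shows the restriction is inessential: the monotonicity in $p$ of \eqref{eq:basicIncreasing} holds for arbitrary reals $r,s\ge 0$, and setting $s=(t-a)/a$ with $t\ge a$ real recovers the required pointwise inequality without modification. This is the only adaptation needed, and it involves no new idea beyond observing that the proof of the corollary never used integrality of $t$.
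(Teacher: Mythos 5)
Your proof is correct and takes essentially the same route as the paper, which defines $f(t)=\binom{kt+i}{ka}^j/\binom{jt+i}{ja}^k$ and shows $f'(t)\geq 0$ on $[a,b]$ by reducing the logarithmic derivative to \cref{cor:harmonicInequality}; your integral identity $\tfrac1k\log R(k)=\int_a^b\sum_{\ell=0}^{ka-1}(ks+i-\ell)^{-1}\,ds$ is, up to the positive factor $jk$, exactly the fundamental theorem of calculus applied to $\log f$, with the corollary's inequality bounding the integrand pointwise. Your explicit observation that \cref{cor:harmonicInequality} must extend to real $t\in[a,b]$ — and does, since its proof never uses integrality of $t$ — usefully makes explicit a step the paper's terse proof leaves implicit.
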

\begin{proof}
The inequality can be rewritten as $f(b) \geq f(a)$, where
\begin{align}
f(t) \coloneqq \frac{\binom{kt+i}{ka}^j}{ \binom{jt+i}{ ja }^{k}  }.
\end{align}
Thus, it suffices to show that $f(t)$ is increasing.
Computing the derivative and factoring out positive terms reduces to \cref{eq:harmonicInequality}.
\end{proof}
\begin{remark}
In the case where $j|k$, the binomial inequality \eqref{eq:binomineq} admits the following combinatorial interpretation.
A certain organization wants $ka$ members to sit on its executive committee and $ja$ members to sit on the committee for each of its $k/j$ factions. Then the number of possible committee constellations with $kb+i$ candidates for the executive committee and $ja+i$ candidates for each of the factions, is greater than the number of committee constellations with $ka+i$ candidates for the executive committee and $jb+i$ candidates for each faction.  
\end{remark}
\begin{lemma} \label{lem:prodBinomLwrBnd}
If $a,b$ and $k$ are non-negative integers such that $b > a$, then for each $0 \leq i \leq ka$ we have
\[
   \binom{kb+i}{ka} \binom{ka+i}{ka}^{-1} \geq \left ( \frac{b+a}{2a} \right )^{ka}
\]
\end{lemma}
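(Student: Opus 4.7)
The plan is to reduce the claimed inequality to a simple polynomial one via a telescoping expansion and Jensen's inequality. First, I would rewrite the quotient of binomial coefficients as
\[
\frac{\binom{kb+i}{ka}}{\binom{ka+i}{ka}} = \frac{(kb+i)!/(kb+i-ka)!}{(ka+i)!/i!} = \prod_{\ell=1}^{ka} \frac{k(b-a)+i+\ell}{i+\ell},
\]
so that the desired bound becomes a comparison of this product against $((b+a)/(2a))^{ka}$.

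Next, I would extend the index to a real variable and consider $g(\ell)=\log(k(b-a)+i+\ell)-\log(i+\ell)$. A direct computation gives
\[
g''(\ell) = \frac{1}{(i+\ell)^2} - \frac{1}{(k(b-a)+i+\ell)^2} > 0
\]
because $b > a$, so $g$ is convex on $[0,\infty)$. Applying Jensen's inequality with uniform weights $1/(ka)$ at the integer points $\ell=1,\dotsc,ka$ yields
\[
\frac{1}{ka}\sum_{\ell=1}^{ka} g(\ell) \geq g\!\left(\frac{ka+1}{2}\right),
\]
and exponentiating together with simplifying the midpoint value gives
\[
\prod_{\ell=1}^{ka} \frac{k(b-a)+i+\ell}{i+\ell} \geq \left(\frac{2kb - ka + 2i + 1}{ka + 2i + 1}\right)^{ka}.
\]

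What then remains is the purely algebraic comparison
\[
\frac{2kb - ka + 2i + 1}{ka + 2i + 1} \geq \frac{b+a}{2a}.
\]
Clearing denominators, this rearranges to $(b-a)(3ka - 2i - 1) \geq 0$, which holds since $b > a$, $i \leq ka$, and $ka \geq 1$; the edge case $ka = 0$ is trivial as both sides equal $1$.

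The main obstacle in this approach is the last step: a priori there is no reason to expect that evaluating Jensen's bound at the midpoint $\ell=(ka+1)/2$ gives a base which is at least $(b+a)/(2a)$. It is precisely the hypothesis $i \leq ka$ that forces $3ka - 2i - 1 \geq 0$ and closes the argument. If this miracle failed one would have to work harder, for instance by pairing $\ell$ with $ka+1-\ell$ to rewrite each pair as $(S_N^2 - d^2)/(S_D^2 - d^2)$ with common $d = 2\ell-ka-1$ and exploiting monotonicity in $d^2$; but as it stands Jensen alone suffices.
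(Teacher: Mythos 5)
Your proof is correct, and it takes a genuinely different route from the paper's. The paper bounds the product termwise: after writing $\binom{kb+i}{ka}\binom{ka+i}{ka}^{-1}=\prod_{\ell=0}^{ka-1}\frac{kb+i-\ell}{ka+i-\ell}$, it observes that $\frac{B+x}{A+x}$ is decreasing for $B>A$, so every factor is at least the first factor $\frac{kb+i}{ka+i}$; then $\frac{kb+i}{ka+i}\geq\frac{b+a}{2a}$ reduces to $(b-a)(ka-i)\geq 0$, which holds by $i\leq ka$. You instead pass to logarithms, note that $g(\ell)=\log(k(b-a)+i+\ell)-\log(i+\ell)$ is convex, and apply Jensen to bound the geometric mean of the factors by the factor at the midpoint $\ell=(ka+1)/2$; your final algebraic step $(b-a)(3ka-2i-1)\geq 0$ also follows from $i\leq ka$ (and $ka\geq 1$). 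Your intermediate bound is actually sharper, since the midpoint factor exceeds the minimum factor, but this extra strength is not needed, and the paper's direct ``bound every factor by the smallest'' argument is a bit more elementary than invoking Jensen. Both approaches are sound, and the observation you highlight as a potential obstacle --- that the hypothesis $i\leq ka$ is exactly what rescues the final comparison --- is the same arithmetic fact that makes the paper's simpler bound work.
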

\begin{proof}
If $B>A$, then the function $f(x) = \frac{B+x}{A+x}$ is decreasing as $x$ increases. 
Thus for $0 \leq i \leq ka$ we have
\begin{align*}
\binom{kb+i}{ka}\binom{ka+i}{ka}^{-1} &= \frac{(kb+i)(kb+i-1)\cdots (kb+i-ka+1)}{(ka+i)(ka+i-1)\cdots (i+1)} \\ &\geq \left ( \frac{kb+i}{ka+i} \right )^{ka} \\ &\geq \left ( \frac{b+a}{2a} \right )^{ka}
\end{align*} \noindent
\end{proof} \noindent
\textbf{Acknowledgements.} The authors would like to thank Darij Grinberg, Gleb Nenashev, Mehtaab
Sawhney and Olof Sisak for helpful discussions. The first author is funded by the
Knut and Alice Wallenberg Foundation (2013.03.07).

\bibliographystyle{amsalpha}
\bibliography{thebibliography}

\end{document}